\newcommand{\XYMATRIX}{\xymatrix@M=6pt}
\newcommand{\aremb}{\ar@{^{(}->}}
\newcommand{\arembfrom}{\ar@{<-^{)}}}
\numberwithin{equation}{section}
  \newtheorem{THM}{Theorem}[section]
  \newtheorem{LEM}[THM]{Lemma}
  \newtheorem{PROP}[THM]{Proposition}
  \newtheorem{COR}[THM]{Corollary}
  \newtheorem{DEF}[THM]{Definition}
  \newtheorem{EX}{Example}[section]
  \newtheorem{OBS}[THM]{Observation}
\newif\ifQEDsign
\newcommand{\QED}{\global\QEDsigntrue\hfill$\square$}
\newenvironment{proof}%
    {\par\noindent\textit{Proof.}\global\QEDsignfalse}%
    {\ifQEDsign\else\QED\fi\par\bigskip\par}
\newcommand{\quotient}[2]{\genfrac{[}{]}{0pt}{}{#1}{#2}}
\newcommand{\place}[3]{\underset{\llap{\raisebox{-1.5mm}{\scriptsize$#1$#2 place}}\uparrow}{#3}}
\renewcommand{\le}{\leqslant}
\renewcommand{\ge}{\geqslant}
\newcommand{\0}{\varnothing}
\renewcommand{\sec}{\cap}
\renewcommand{\phi}{\varphi}
\renewcommand{\epsilon}{\varepsilon}
\newcommand{\BB}{\mathbf{B}}
\newcommand{\CC}{\mathbf{C}}
\newcommand{\DD}{\mathbf{D}}
\newcommand{\KK}{\mathbf{K}}
\newcommand{\NN}{\mathbb{N}}
\newcommand{\QQ}{\mathbb{Q}}
\newcommand{\union}{\cup}
\newcommand{\Boxed}[1]{\mbox{$#1$}}
\newcommand{\embedsto}{\hookrightarrow}
\newcommand{\id}{\mathrm{id}}
\newcommand{\ID}{\mathrm{ID}}
\newcommand{\Ob}{\mathrm{Ob}}
\newcommand{\LO}{\mathrm{LO}}
\newcommand{\Sym}{\mathrm{Sym}}
\newcommand{\Con}{\mathrm{Con}}
\newcommand{\Surj}{\mathrm{Surj}}
\newcommand{\Bii}{\mathit{2}}
\newcommand{\op}{\mathrm{op}}
\newcommand{\FinSetSurj}{\mathbf{FSS}}
\newcommand{\FinSetInj}{\mathbf{FSI}}
\newcommand{\FinBaSurj}{\mathbf{FBAS}}
\newcommand{\BA}{\mathbf{FBA}}
\newcommand{\OBA}{\mathbf{OFBA}}
\newcommand{\VV}{\mathbf{V}_{\mathit{fin}}}
\newcommand{\OV}{\mathbf{OV}_{\mathit{fin}}}
\newcommand{\calA}{\mathcal{A}}
\newcommand{\calB}{\mathcal{B}}
\newcommand{\calC}{\mathcal{C}}
\newcommand{\calD}{\mathcal{D}}
\newcommand{\calF}{\mathcal{F}}
\newcommand{\calK}{\mathcal{K}}
\newcommand{\calM}{\mathcal{M}}
\newcommand{\calO}{\mathcal{O}}
\newcommand{\calQ}{\mathcal{Q}}
\newcommand{\calS}{\mathcal{S}}
\newcommand{\Fraisse}{Fra\"\i ss\'e}
\newcommand{\oa}{\overline{a}}
\newcommand{\LW}{L_{\omega_1,\omega}}
\newcommand{\leftexp}[2]{{\vphantom{#2}}^{#1}{#2}}
\newcommand{\W}{\leftexp{\omega >}{\omega}}
\DeclareMathOperator{\Flim}{Flim}
\DeclareMathOperator{\Aut}{Aut}
\DeclareMathOperator{\age}{age}
\title{Categorical Constructions and the Ramsey Property}
\author{Dragan Ma\v sulovi\'c\\
        University of Novi Sad, Faculty of Sciences\\
        Department of Mathematics and Informatics\\
        Trg Dositeja Obradovi\'ca 3, 21000 Novi Sad, Serbia\\
        email: dragan.masulovic@dmi.uns.ac.rs
        \and
        Lynn Scow\\
        Vassar College\\
        Department of Mathematics and Statistics\\
	124 Raymond Ave., Box 500\\
        Poughkeepsie, NY 12604, USA\\
        email: lyscow@vassar.edu
}
\begin{document}
\maketitle

\begin{abstract}
  It has become obvious in recent development that the structural Ramsey property is a categorical property:
  it depends not only on the choice of objects, but also on the choice of morphisms involved.
  In this paper we explicitly put the Ramsey property and the dual Ramsey property in the
  context of categories of finite structures and investigate the
  invariance of these properties under adjunctions and categorical equivalence.
  We use elementary category theory to generalize some combinatorial results and
  using the machinery of very basic category theory provide new combinatorial statements
  (whose formulations do not refer to category-theoretic notions).

  \bigskip

  \noindent \textbf{Key Words:} Ramsey property, categorical equivalence, topological dynamics

  \medskip

  \noindent \textbf{AMS Subj.\ Classification (2010):} 05C55, 18A99, 37B05
\end{abstract}

\section{Introduction}

It has become obvious in recent papers that
the structural Ramsey property is a categorical property:
it depends not only on the choice of objects, but also on the choice of morphisms involved
(see \cite{GLR,Nesetril,vanthe-ufcs,P-V,solecki-dual,zucker1,mu-pon}).
In this paper we explicitely put the Ramsey property and the
dual Ramsey property in the context of categories of finite structures and investigate the
invariance of these properties under adjunctions and categorical equivalence.
Our main result is that the Ramsey property is invariant under
categorical equivalence (and, hence, prove that the Ramsey property is a genuine categorical property).
We use elementary category theory to generalize some combinatorial results and
using the machinery of very basic category theory provide new combinatorial statements
(whose formulations do not refer to category-theoretic notions).
The intention of this paper can best be summarised by the following words od Bodirsky~\cite{Bodirsky}:
\begin{quote}
  ``Establishing that a class has the Ramsey property is often a substantial combinatorial challenge,
  and we are therefore interested in general transfer principles that allow to prove the Ramsey property
  by reducing to known Ramsey classes; this will be the topic of this text.''
\end{quote}

In Section~\ref{cerp.sec.prelim} we recall basics of category theory and formulate the
(dual) Ramsey property in terms of objects and morphisms (cf.~\cite{mu-pon,zucker1}).

In Section~\ref{cerp.sec.RPA} we first show that
right adjoints preserve the Ramsey property for morphisms while
left adjoints preserve the dual Ramsey property for morphisms.
The adjoints do not in general preserve the (dual) Ramsey property for objects,
so we move on to Ramsey properties in the context of categorical equivalence.
We show that if a category of finite structures has certain Ramsey property then the category
equivalent to it has the same Ramsey property, while the category dually
equivalent to it has the dual property. As an example we derive several Ramsey properties.

In Sections~\ref{cerp.sec.primal}, \ref{cerp.sec.fraisse} and~\ref{cerp.sec.oetd} we systematically treat
the best known and most important example of a categorical equivalence
in algebra, that between the variety of boolean algebras and any variety generated by
a single primal algebra (which is a finite algebra where all operations are term operations),~\cite{hu1,hu2}.

Section~\ref{cerp.sec.Lynn} contains a discussion of \Fraisse\ limits with identical automorphism group.
The principal motivation for this section is the following result from \cite{KPT} which states that
the Ramsey property is invariant under certain model-theoretic constructions and which is a special case
of our results in Section~\ref{cerp.sec.RPA}:

\begin{PROP} \cite[Proposition 9.1 (i)]{KPT}
  Let $\calK_0$ be a \Fraisse\ class in a signature $L_0$, let $L = L_0 \union \{\Boxed<\}$
  and let $\calK$, $\calK'$ be reasonable \Fraisse\ order classes in $L$ that are expansions of $\calK_0$.
  Assume that $\calK$ and $\calK'$ are simply bi-definable. The $\calK$ satisfies the Ramsey property
  iff $\calK'$ satisfies the Ramsey property.
\end{PROP}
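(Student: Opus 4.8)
The plan is to recognise simple bi-definability of $\calK$ and $\calK'$ as an instance of categorical equivalence and then invoke the invariance of the Ramsey property under categorical equivalence established in Section~\ref{cerp.sec.RPA}. Let $\calC$ be the category whose objects are the structures in $\calK$ and whose morphisms are the embeddings between them, and let $\calD$ be the analogous category built from $\calK'$. As set up in Section~\ref{cerp.sec.prelim}, the statement ``$\calK$ satisfies the Ramsey property'' means exactly that $\calC$ has the Ramsey property (for morphisms, which here coincides with the Ramsey property for objects, since finite linearly ordered structures are rigid), and similarly for $\calK'$ and $\calD$. So it suffices to produce an equivalence $\calC \simeq \calD$.

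To build it, write $\mathbf{K}_0 = \Flim(\calK_0)$, $\mathbf{K} = \Flim(\calK)$, $\mathbf{K}' = \Flim(\calK')$; because $\calK$ and $\calK'$ are reasonable, $\mathbf{K}$ and $\mathbf{K}'$ are expansions of (a single copy of) $\mathbf{K}_0$ by linear orders $<$ and $<'$ respectively. Simple bi-definability supplies parameter-free $L$-formulas $\phi(x,y)$ and $\psi(x,y)$ --- of a syntactic form preserved and reflected by $L_0$-embeddings, so that they localise from $\mathbf{K}_0$ to its finite substructures --- with $<'$ defined by $\phi$ in $(\mathbf{K}_0,<)$ and $<$ defined by $\psi$ in $(\mathbf{K}_0,<')$, these operations being mutually inverse. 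Define $F\colon \calC \to \calD$ by replacing the order of $\mathbf{A} = (\mathbf{A}_0, \le_\mathbf{A}) \in \calK$ with the relation that $\phi$ defines in $\mathbf{A}$, keeping the same underlying map on morphisms. Then $F(\mathbf{A}) \in \calK'$: as $\mathbf{A}$ embeds into $\mathbf{K}$ as an $L$-structure and $\phi$ is inherited along that embedding, $F(\mathbf{A})$ embeds into $\mathbf{K}'$, hence lies in $\age(\mathbf{K}') = \calK'$; and any embedding $f\colon \mathbf{A} \embedsto \mathbf{B}$ of $\calC$ is still an embedding $F(\mathbf{A}) \embedsto F(\mathbf{B})$, being an $L_0$-embedding that preserves and reflects the $\phi$-definable relation. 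Defining $G\colon \calD \to \calC$ symmetrically from $\psi$, mutual inverseness of $\phi$ and $\psi$ yields $G F = \id_\calC$ and $F G = \id_\calD$; thus $\calC$ and $\calD$ are isomorphic, in particular equivalent.

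By the theorem of Section~\ref{cerp.sec.RPA} that the Ramsey property is invariant under categorical equivalence, $\calC$ has the Ramsey property iff $\calD$ does, which is precisely the assertion that $\calK$ satisfies the Ramsey property iff $\calK'$ does.

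The step needing the most care is the well-definedness of $F$ on objects --- that feeding a \emph{finite} member of $\calK$ to $\phi$ genuinely returns a member of $\calK'$, in particular a linear order. This is where the exact content of ``simply bi-definable'' is used: one needs the defining formulas to be absolute between a finite structure and the \Fraisse\ limit containing it, which is automatic for quantifier-free formulas and is the role played by the word ``simply''; ``reasonable'' is what guarantees that $\mathbf{K}$ and $\mathbf{K}'$ really do share the common reduct $\mathbf{K}_0$ on which the bi-definition is phrased. A secondary bookkeeping point is to confirm that the Section~\ref{cerp.sec.RPA} result applies to categories of finite structures with embeddings and that, as already noted, no ambiguity between the object- and morphism-versions of the Ramsey property arises here, since ordered structures have no nontrivial automorphisms.
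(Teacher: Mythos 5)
Your proof is correct and follows exactly the route the paper intends: it presents this Proposition as a special case of its Section~\ref{cerp.sec.RPA} results, with simple bi-definability inducing an isomorphism (hence equivalence) of the categories of finite structures with embeddings, after which Theorem~\ref{cerp.thm.dual} applies. Your additional care about absoluteness of the quantifier-free (``simple'') defining formulas and about rigidity of ordered structures fills in precisely the routine details the paper leaves implicit.
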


We close the paper with two appendices.
The first one (Section~\ref{cerp.sec.prod}) generalizes
the product Ramsey theorem of M.~Soki\'c~\cite{sokic2,sokic-phd}.
We provide an abstract proof that if two categories have
some of Ramsey property, then their categorical product has the ``combined'' kind of Ramsey
property. As a consequence we have that for every category $\CC$ and every positive integer $n$,
if $\CC$ a (dual) Ramsey property then so does $\CC^n$, showing thus a
metaresult that every finite (dual) Ramsey theorem has the finite product version.
The second appendix (Section~\ref{cerp.sec.app2}) provides a discussion of \Fraisse\ theory
and the Kechris-Pestov-Todor\v cevi\'c correspondence in case of first-order structures
over a language which contains functional symbols.

\section{Preliminaries}
\label{cerp.sec.prelim}

\paragraph{Categories of structures.}
In order to specify a category $\CC$ one has to specify
a class of objects $\Ob(\CC)$,
a set of morphisms $\hom_\CC(\calA, \calB)$ for all $\calA, \calB \in \Ob(\CC)$,
an identity morphism $\id_\calA$ for all $\calA \in \Ob(\CC)$, and
the composition of morphisms $\cdot$ so that
\begin{itemize}
\item $(f \cdot g) \cdot h = f \cdot (g \cdot h)$, and
\item $\id_\calB \cdot f = f \cdot \id_\calA$ for all $f \in \hom_\CC(\calA, \calB)$.
\end{itemize}
Let $\Aut(\calA)$ denote the set of all invertible morphisms $\calA \to \calA$.
Recall that an object $\calA \in \Ob(\CC)$ is \emph{rigid} if $\Aut(\calA) = \{\id_\calA\}$.

For $\calA, \calB \in \Ob(\CC)$ we write $\calA \to \calB$ to denote that $\hom_\CC(\calA, \calB) \ne \0$.
Note that morphisms in $\hom_\CC(\calA, \calB)$ are not necessarily structure-preserving mappings
from $\calA$ to $\calB$, and that the composition
$\cdot$ in a category is not necessarily composition of mappings. We shall see examples later.
Instead of $\hom_\CC(\calA, \calB)$ we write $\hom(\calA, \calB)$ whenever $\CC$ is obvious from the context.

In this paper we are mostly interested in categories of structures.
  A \emph{structure} $\calA = (A, \Delta)$ is a set $A$ together with a set $\Delta$ of
  functions and relations on $A$, each having some finite arity.
  An embedding $f: \calA \rightarrow \calB$ is an injection $f: A \rightarrow B$ which respects
  the functions in $\Delta$, and respects and reflects the relations in $\Delta$.
Surjective embeddings are isomorphisms. A structure $\calA$ is a \emph{substructure} of a structure
$\calB$ ($\calA \le \calB$) if the identity map is an embedding of $\calA$ into $\calB$.
Here is some further notation and terminology.
A structure $\calA = (A, \Delta)$ is finite if $A$ is a finite set.
The underlying set of a structure $\calA$, $\calA_1$, $\calA^*$, \ldots\ will always be denoted by its roman letter $A$, $A_1$, $A^*$, \ldots\ respectively.
We say that a structure $\calA = (A, \Delta)$ is \emph{ordered} if there is a binary relation
$<$ in $\Delta$ which linearly orders $A$.
Given a structure $\calA = (A, \Delta)$ and a linear ordering $<$ on $A$,
we write $\calA_<$ for the structure $(A, \Delta, \Boxed<)$.
Moreover, we shall always write $\calA$ to denote the unordered reduct of~$\calA_<$.
Linear orders denoted by $<$, $\sqsubset$ etc.\ are irreflexive (strict linear orders), whereas
by $\le$, $\sqsubseteq$ etc.\ we denote the corresponding reflexive linear orders.

\paragraph{Adjunction, equivalence and isomorphism of categories.}
A pair of functors $F : \CC \rightleftarrows \DD : G$ is an \emph{adjunction} provided there is a family of isomorphisms
$\Phi_{\calC,\calD} : \hom_\DD(F(\calC), \calD) \cong \hom_\CC(\calC, G(\calD))$ natural in both $\calC$ and $\calD$.
We say that $F$ is \emph{left adjoint} to $G$ and $G$ is \emph{right adjoint} to $F$. Every adjunction
$F : \CC \rightleftarrows \DD : G$ gives rise to two natural transformations
$\eta : \ID_\CC \to GF$ and $\epsilon : FG \to \ID_\DD$ referred to as \emph{unit} and \emph{counit}, respectively,
satisfying the so-called \emph{unit-counit identities}
$\epsilon F \cdot F \eta = \id_F$ and $G \epsilon \cdot \eta G = \id_G$. If $f : F(\calC) \to \calD$ and
$g : \calC \to G(\calD)$ are morphisms in $\DD$ and $\CC$, respectively, then
$\Phi(f) = G(f) \cdot \eta_\calC$ and $\Phi^{-1}(g) = \epsilon_\calD \cdot F(g)$.

Categories $\CC$ and $\DD$ are \emph{equivalent} if there exist functors $E : \CC \to \DD$
and $H : \DD \to \CC$, and natural isomorphisms $\eta : \ID_\CC \to HE$ and $\epsilon : \ID_\DD \to EH$.
We say that $H$ is a pseudoinverse of $E$ and vice versa.
It is a well known fact that a functor $E : \CC \to \DD$ has a pseudoinverse if and only if it is full, faithful and isomorphism-dense.
If $E$ has a pseudoinverse then $\CC$ and $\DD$ are equivalent. Clearly, categorical equivalence is a particular form of adjunction.

A \emph{skeleton} of a category is a full, isomorphism-dense subcategory in which no two distinct objects are isomorphic.
It is easy to see that (assuming (AC)) every category has a skeleton. It is also a well known fact that two categories are
equivalent if and only if they have isomorphic skeletons.
Categories $\CC$ and $\DD$ are \emph{dually equivalent} if $\CC$ and $\DD^\op$ are equivalent.

Categories $\CC$ and $\DD$ are \emph{isomorphic} if there exist functors $E : \CC \to \DD$ and $H : \DD \to \CC$ such that
$H$ is the inverse of $E$. A functor $E : \CC \to \DD$ is isomorphism-dense if
for every $D \in \Ob(\DD)$ there is a $C \in \Ob(\CC)$ such that $E(C) \cong D$.

\paragraph{Ramsey property for categories.}
We say that
$
  \calS = \calM_1 \union \ldots \union \calM_k
$
is a \emph{$k$-coloring} of $\calS$ if $\calM_i \sec \calM_j = \0$ whenever $i \ne j$.
Equivalentnly, a $k$-coloring of $\calS$ is a mapping $\chi : \calS \to \{1, 2, \ldots, k\}$.
We shall use both points of view as we find appropriate.

Given a category $\CC$
let $\hom(\calA, \calB)$ denote the set of all morphisms $\calA \to \calB$ in $\CC$. Define $\sim_\calA$ on
$\hom(\calA, \calB)$ as follows: for $f, f' \in \hom(\calA, \calB)$ we let $f \sim_\calA f'$ if $f' = f \cdot \alpha$
for some $\alpha \in \Aut(\calA)$. Then
$$
  \binom \calB \calA = \hom(\calA, \calB) / \Boxed{\sim_\calA}
$$
corresponds to all subobjects of $\calB$ isomorphic to $\calA$ (see \cite{Nesetril,NR-2}).
For an integer $k \ge 2$ and $\calA, \calB, \calC \in \Ob(\CC)$ we write
$$
  \calC \longrightarrow (\calB)^{\calA}_k
$$
to denote that $\calA \to \calB \to \calC$ and for every $k$-coloring
$
  \binom \calC\calA = \calM_1 \union \ldots \union \calM_k
$
there is an $i \in \{1, \ldots, k\}$ and a morphism $w : \calB \to \calC$ such that
$w \cdot \binom \calB\calA \subseteq \calM_i$.
(Note that $w \cdot (f / \Boxed{\sim_\calA}) = (w \cdot f) / \Boxed{\sim_\calA}$ for $f / \Boxed{\sim_\calA} \in \binom\calB\calA$.)
We write
$$
  \calC \overset{\mathit{hom}}\longrightarrow (\calB)^{\calA}_k
$$
to denote that $\calA \to \calB \to \calC$ in $\CC$ and for every $k$-coloring
$
  \hom(\calA, \calC) = \calM_1 \union \ldots \union \calM_k
$
there is an $i \in \{1, \ldots, k\}$ and a morphism $w : \calB \to \calC$ such that
$w \cdot \hom(\calA, \calB) \subseteq \calM_i$.

A category $\CC$ has the \emph{Ramsey property for objects} if
for every integer $k \ge 2$ and all $\calA, \calB \in \Ob(\CC)$
such that $\calA \to \calB$ there is a
$\calC \in \Ob(\CC)$ such that $\calC \longrightarrow (\calB)^{\calA}_k$.
A category $\CC$ has the \emph{Ramsey property for morphisms} if
for every integer $k \ge 2$ and all $\calA, \calB \in \Ob(\CC)$
such that $\calA \to \calB$ there is a
$\calC \in \Ob(\CC)$ such that $\calC \overset{\mathit{hom}}\longrightarrow (\calB)^{\calA}_k$.

In a category of finite ordered structures all the relations
$\sim_\calA$ are trivial and the two Ramsey properties coincide.
Therefore, we say that a category of finite ordered structures and embeddings has the \emph{Ramsey property}
if it has the Ramsey property for morphisms.

\begin{EX}\label{cerp.ex.FRP}
    The category $\FinSetInj$ of finite sets and injective maps has the Ramsey property for objects.
    This is just a reformulation of the Finite Ramsey Theorem:

  \begin{THM}\label{cerp.thm.FRT} \cite{Ramsey}
    For all positive integers $k$, $a$, $m$ there is a positive integer $n$ such that
    for every $n$-element set $C$ and every $k$-coloring of the set $\binom Ca$ of all $a$-element subsets of $C$
    there is an $m$-element subset $B$ of $C$ such that $\binom Ba$ is monochromatic.
  \end{THM}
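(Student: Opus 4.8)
The plan is to prove this classical statement by induction on the arity $a$, using the standard ``iterated thinning'' argument. The base case $a = 1$ is just the pigeonhole principle: a $k$-coloring of the $n$ one-element subsets of an $n$-element set partitions $n$ points into $k$ classes, so $n \ge k(m-1)+1$ already forces a class of size $\ge m$. When $m < a$ the set of $a$-element subsets of an $m$-element set is empty, and when $m = a$ it is a singleton, so both of these cases are trivial; hence in the inductive step we may always assume $m \ge a \ge 2$. For bookkeeping I would write $R_k(a;m)$ for the least $n$ that works for given parameters (whose existence is exactly what we are proving).

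For the inductive step, assume the theorem for arity $a-1$ (for all $k$ and $m$). Fix $k$ and $m \ge a$, and set $N = k(m-1)+1$. I would build a strictly increasing sequence $x_1 < x_2 < \dots < x_N$ of elements of $C$ together with a nested sequence of ``reservoirs'' $C = B_0 \supseteq B_1 \supseteq \dots \supseteq B_N$ as follows: given $x_1,\dots,x_{j-1}$ and $B_{j-1}$, let $x_j = \min B_{j-1}$ and consider the $k$-coloring $\sigma \mapsto \chi(\{x_j\} \cup \sigma)$ of the $(a-1)$-element subsets $\sigma$ of $B_{j-1}\setminus\{x_j\}$; by the induction hypothesis, provided $B_{j-1}$ is large enough, there is $B_j \subseteq B_{j-1}\setminus\{x_j\}$ of a prescribed size on which this coloring is constant, with value $c_j \in \{1,\dots,k\}$. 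The crucial observation is that then the $\chi$-color of any $a$-subset $\{x_{i_1},\dots,x_{i_a}\}$ with $i_1 < \dots < i_a$ equals $c_{i_1}$, since $x_{i_2},\dots,x_{i_a}$ all lie in $B_{i_1}$.

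To guarantee the construction runs for all $N$ rounds I would fix the reservoir sizes by a downward recursion: set $t_N = a-1$ and $t_{j-1} = R_k(a-1;\,t_j) + 1$, and then take $n = t_0$; this recursion is well defined precisely because the induction hypothesis provides the numbers $R_k(a-1;\cdot)$, and choosing targets $\ge a-1$ ensures each $c_j$ is genuinely defined. With $|C| = n = t_0$ one checks inductively that $|B_j| \ge t_j$, so all $x_j$ and $B_j$ exist. Finally, among the $N = k(m-1)+1$ colors $c_1,\dots,c_N$ the pigeonhole principle yields $j_1 < \dots < j_m$ with $c_{j_1} = \dots = c_{j_m}$; then $B = \{x_{j_1},\dots,x_{j_m}\}$ works, because every $a$-subset of $B$ has least element $x_{j_i}$ for some $i$, hence color $c_{j_i}$, the common value. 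So $\binom{B}{a}$ is monochromatic.

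The only genuine work is the bookkeeping of reservoir sizes — the nested recursion defining $n$ — and grounding the induction correctly by dispatching the cases $m \le a$ and $a = 1$; there is no conceptual obstacle. An alternative route I would keep in mind is compactness: first prove the infinite Ramsey theorem by the same thinning construction (which now never exhausts its reservoir), and then deduce the finite statement by a König's-lemma argument applied to the tree of ``bad'' finite colorings. This trades the explicit recursion for an appeal to compactness and is arguably shorter to write, at the cost of passing through the infinite.
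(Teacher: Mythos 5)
The paper does not prove this theorem; it is quoted verbatim as Ramsey's classical result, with a citation to Ramsey's 1930 paper, and is used only as the combinatorial input behind the statement that $\FinSetInj$ has the Ramsey property for objects. Your proof is the standard one-step-thinning induction on the arity $a$ and is correct: the base case and the degenerate cases $m\le a$ are dispatched properly, the key observation that the colour of $\{x_{i_1},\dots,x_{i_a}\}$ depends only on $x_{i_1}$ is justified by the nesting $x_{i_2},\dots,x_{i_a}\in B_{i_1}$, and the downward recursion $t_{j-1}=R_k(a-1;t_j)+1$ with $t_N=a-1$ correctly guarantees that all $N=k(m-1)+1$ rounds can be carried out (using, implicitly, the harmless monotonicity of the Ramsey numbers in $n$). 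There is nothing in the paper to compare your argument against, so no further comment is needed.
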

\end{EX}

A category $\CC$ has the \emph{dual Ramsey property for objects (morphisms)} if
$C^\op$ has the Ramsey property for objects (morphisms).

\begin{EX}\label{cerp.ex.FDRT}
    The category $\FinSetSurj$ of finite sets and surjective maps has the dual Ramsey property for objects.
    This is just a reformulation of the Finite Dual Ramsey Theorem:

  \begin{THM}\label{cerp.thm.FDRT} \cite{GR}
    For all positive integers $k$, $a$, $m$ there is a positive integer $n$ such that
    for every $n$-element set $C$ and every $k$-coloring of the set $\quotient Ca$ of all partitions of
    $C$ with exactly $a$ blocks there is a partition $\beta$ of $C$ with exactly $m$ blocks such that
    the set of all patitions from $\quotient Ca$ which are coarser than $\beta$ is monochromatic.
  \end{THM}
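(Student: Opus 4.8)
The plan is to recast the statement in terms of \emph{rigid surjections} and then to recognise it as the empty-alphabet instance of the Graham--Rothschild partition theorem, which is proved by induction with the Hales--Jewett theorem as its engine.

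First I would set up a dictionary. Identify $C$ with $\{1 < 2 < \dots < n\}$. A partition of $C$ into exactly $a$ blocks is the same datum as a surjection $f : C \twoheadrightarrow \{1,\dots,a\}$ that is \emph{rigid}, i.e.\ $\min f^{-1}(1) < \min f^{-1}(2) < \dots < \min f^{-1}(a)$; here the $i$-th block is $f^{-1}(i)$. Under this correspondence a partition $\gamma$ (with $a$ blocks, rigid surjection $g$) is coarser than a partition $\beta$ (with $m$ blocks, rigid surjection $f$) exactly when $g = h \circ f$ for a (unique, and automatically rigid) surjection $h : \{1,\dots,m\} \twoheadrightarrow \{1,\dots,a\}$; moreover $h \mapsto h \circ f$ is a bijection from the rigid surjections $\{1,\dots,m\}\twoheadrightarrow\{1,\dots,a\}$ onto the set of $a$-block coarsenings of $\beta$. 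So the theorem becomes: for all $k,a,m$ there is $n$ such that for every $k$-colouring $\chi$ of the set of rigid surjections $C \twoheadrightarrow \{1,\dots,a\}$ there is a rigid surjection $f : C \twoheadrightarrow \{1,\dots,m\}$ for which $h \mapsto \chi(h \circ f)$ is constant. This is precisely the statement obtained by running the Ramsey property for objects in $\FinSetSurj^{\op}$, with injections replaced by rigid surjections; it is the genuine ``dual'' of the Finite Ramsey Theorem of Example~\ref{cerp.ex.FRP} and, unlike the transfer results of this paper, it is not a formal consequence of it.

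I would then prove the reformulated statement by induction on the number of blocks $a$, treating it as the empty-alphabet case of the Graham--Rothschild theorem on $a$-parameter words. The case $a = 1$ is trivial. For the inductive step one peels off the last block: a rigid surjection onto $\{1,\dots,a\}$ splits into its restriction to the preimage of $\{1,\dots,a-1\}$ (a rigid surjection onto an $(a-1)$-element set) together with the choice of which of the remaining coordinates form the last block, and stabilising the colour on this second piece is a Hales--Jewett statement over a \emph{nonempty} alphabet. One then iterates: the inductive hypothesis stabilises the colour on the $(a-1)$-part while Hales--Jewett stabilises it as each new ``parameter'' coordinate is introduced, and an amalgamation glues the two contributions together. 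Packaging this bookkeeping cleanly is exactly the content of the Graham--Rothschild parameter-set theorem, so in practice I would simply invoke \cite{GR}.

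The main obstacle is the inductive amalgamation step. Although the final statement only concerns partitions --- parameter words over the \emph{empty} alphabet, where Hales--Jewett itself says nothing --- the induction genuinely requires Hales--Jewett for nonempty alphabets, and one must carefully track the order-rigidity of all surjections involved so that, at the end, the $a$-block coarsenings of the resulting partition $\beta$ are exactly parametrised by the rigid surjections $h$ one wishes to make monochromatic.
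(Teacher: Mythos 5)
Your proposal is correct and takes essentially the same route as the paper, which states this result as the Graham--Rothschild theorem and simply cites \cite{GR} without proof; your dictionary between $a$-block partitions and rigid surjections, and the identification of the coarsening relation with composition $h\circ f$, is the standard and accurate reformulation. The sketch of the Hales--Jewett-driven induction (and your remark that the empty-alphabet case still needs the nonempty-alphabet machinery) correctly describes how \cite{GR} is actually proved, so deferring the amalgamation bookkeeping to that reference is exactly what the paper does.
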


    To show that this is indeed the case, let $\CC = \FinSetSurj$. For $A, B \in \Ob(\CC)$
    let $\Surj(B, A)$ denote the set of all surjective maps $B \twoheadrightarrow A$.
    Define $\equiv_A$ on $\Surj(B, A)$ as follows: for $f, f' \in \Surj(B, A)$ we let
    $f \equiv_A f'$ if $f' = \alpha \circ f$ for some bijection~$\alpha : A \to A$.

    The claim that $\CC$ has the dual Ramsey property for objects means that
    $\CC^\op$ has the Ramsey property for objects, which then means that
    for every integer $k \ge 2$ and all $A, B \in \Ob(\CC)$
    such that $\hom_{\CC^\op}(A, B) \ne \0$ there is a $C \in \Ob(\CC)$ such that for every $k$-coloring
    $
      {\binom CA}_{\CC^\op} = \calM_1 \union \ldots \union \calM_k
    $
    there is an $i \in \{1, \ldots, k\}$ and a morphism $w \in \hom_{\CC^\op}(B, C)$ such that
    $w \cdot {\binom BA}_{\CC^\op} \subseteq \calM_i$. Since
    \begin{align*}
      f \cdot g \text{ (in $\CC^\op$)} &= g \circ f\\
      \hom_{\CC^\op}(A, B) &= \hom_\CC(B, A) = \Surj(B, A)\\
      {\binom BA}_{\CC^\op} &= \hom_{\CC^\op}(A, B) \text{ factored by } \sim_A \text{ in } \CC^\op\\
                                    &= \hom_\CC(B, A) \text{ factored by } \equiv_A \text{ in } \CC\\
                                    &= \Surj(B, A) / \Boxed{\equiv_A},
    \end{align*}
    the fact that $\CC^\op$ has the Ramsey property for objects reads as follows:
    for every integer $k \ge 2$ and all finite sets $A$ and $B$
    such that $\Surj(B, A) \ne \0$ there is a finite set
    $C$ such that for every $k$-coloring
    $$
      \Surj(C, A) / \Boxed{\equiv_A} \; = \; \calM_1 \union \ldots \union \calM_k
    $$
    there is an $i \in \{1, \ldots, k\}$ and a surjective mapping
    $w \in \Surj(C, B)$ satisfying
    $(\Surj(B, A) / \Boxed{\equiv_A}) \circ w \subseteq \calM_i$.
    (Note that $(f / \Boxed{\equiv_A}) \circ w = (f \circ w) / \Boxed{\equiv_A}$ for $f / \Boxed{\equiv_A} \in
    \Surj(B, A) / \equiv_A$.)
    Since $\Surj(B, A) / \Boxed{\equiv_A}$ corresponds to partitions of $B$ into $|A|$-many blocks
    we see that the categorical statement is indeed a reformulation of Theorem~\ref{cerp.thm.FDRT}.
\end{EX}

We can show that the Ramsey property for objects and the Ramsey property for morphisms
are closely related for categories where all the morphisms are monic (that is, left cancellable;
compare with \cite{zucker1}).
The assumption of rigidity below was pointed out in \cite{Nesetril}.

\begin{PROP}\label{cerp.prop.rigid}
  Let $\CC$ be a category where morphisms are monic.
  If $\CC$
  has the Ramsey property for morphisms then all the objects in $\CC$ are rigid. Consequently,
  a category $\CC$ has the Ramsey property for morphisms if and only if all the objects in $\CC$ are rigid
  and $\CC$ has the Ramsey property for objects.
\end{PROP}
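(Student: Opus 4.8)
The plan is to prove the two assertions in turn, the first carrying all the content. Suppose, towards a contradiction, that $\CC$ has the Ramsey property for morphisms yet some object $\calA$ is not rigid, and fix $\alpha \in \Aut(\calA)$ with $\alpha \ne \id_\calA$. Applying the hypothesis with $\calB := \calA$ and $k := 3$ yields a $\calC$ with $\calC \overset{\mathit{hom}}{\longrightarrow} (\calA)^{\calA}_3$; in particular $\calA \to \calC$, so $\hom(\calA,\calC) \ne \0$. The key point is that right composition with $\alpha$, namely $\sigma : \hom(\calA,\calC) \to \hom(\calA,\calC)$, $f \mapsto f \cdot \alpha$, is a permutation (with inverse $g \mapsto g \cdot \alpha^{-1}$) which is \emph{fixed-point-free}: here is where monicity enters, since $f \cdot \alpha = f = f \cdot \id_\calA$ with $f$ monic would force $\alpha = \id_\calA$. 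Any fixed-point-free permutation of a set admits a colouring $\chi$ with at most three colours such that $\chi(f) \ne \chi(\sigma(f))$ for every $f$: colour each $\langle\sigma\rangle$-orbit separately, a finite orbit being a cycle of length $\ge 2$ (properly $2$- or $3$-colourable) and an infinite orbit a $\ZZ$-line (properly $2$-colourable). For this $\chi$ and any morphism $w : \calA \to \calC$, the set $w \cdot \hom(\calA,\calA)$ contains $w = w \cdot \id_\calA$ and $w \cdot \alpha = \sigma(w)$, which receive different colours; hence $w \cdot \hom(\calA,\calA)$ is never monochromatic, contradicting $\calC \overset{\mathit{hom}}{\longrightarrow} (\calA)^{\calA}_3$.

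For the ``Consequently'' part, I would exploit that rigidity of every object makes each relation $\sim_\calA$ on $\hom(\calA,\calB)$ trivial: if $f' = f \cdot \alpha$ with $\alpha \in \Aut(\calA) = \{\id_\calA\}$ then $f' = f$. Thus the quotient map $\hom(\calA,\calB) \to \binom{\calB}{\calA}$ is a bijection, and since it is compatible with post-composition by a morphism $w$ (i.e.\ with the induced action of $w$ on subobjects), the two partition arrows $\calC \longrightarrow (\calB)^{\calA}_k$ and $\calC \overset{\mathit{hom}}{\longrightarrow} (\calB)^{\calA}_k$ assert verbatim the same thing. Hence, for such a $\CC$, the Ramsey property for objects and the Ramsey property for morphisms are equivalent. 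Combining the two halves: if $\CC$ has the Ramsey property for morphisms, the first part yields rigidity of all objects and then the previous sentence yields the Ramsey property for objects; conversely, rigidity of all objects together with the Ramsey property for objects upgrades, via the same identification, to the Ramsey property for morphisms.

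The bookkeeping — that $\sim_\calA$-classes have size $|\Aut(\calA)|$ by monicity, that the action of $w$ on $\binom{\calC}{\calA}$ is well defined, that $\hom(\calA,\calC) \ne \0$ is already part of the meaning of $\calC \overset{\mathit{hom}}{\longrightarrow} (\calA)^{\calA}_k$ — is immediate. The one step that needs a moment's thought is producing the bad colouring from the fixed-point-free permutation $\sigma$, and it is precisely the possibility of an odd cycle among the orbits of $\sigma$ that forces three colours rather than two; I expect this small combinatorial observation to be the only genuine obstacle, the remainder being a routine unwinding of the definitions.
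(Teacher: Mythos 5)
Your proof is correct and follows essentially the same route as the paper: monicity makes right-composition by $\alpha$ fixed-point-free, and one builds a colouring along the $\langle\alpha\rangle$-orbits that no $w\cdot\hom(\calA,\calA)$ can avoid, after which the ``Consequently'' part is the observation that rigidity trivialises $\sim_\calA$. The only difference is that you insist on a proper colouring of the permutation $f\mapsto f\cdot\alpha$ (hence need $k=3$ to handle odd cycles), whereas the paper merely requires each orbit $h\cdot\langle\alpha\rangle$ to be non-monochromatic --- possible with $k=2$ since every orbit has size at least $2$ --- and then uses that $w\cdot\langle\alpha\rangle$ is itself an orbit contained in $w\cdot\hom(\calA,\calA)$; both variants are valid since the Ramsey property is required for all $k\ge 2$.
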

\begin{proof}
  Assume that $\calA \in \Ob(\CC)$ is not rigid and let
  $\alpha \in \Aut(\calA)$ be an automorphism of $\calA$ such that $\alpha \ne \id_\calA$.
  In order to show that $\CC$ does not have the Ramsey property for morphisms, take any $\calC \in \Ob(\CC)$
  and let us show that $\calC \not\longrightarrow (\calA)^\calA_2$.

  Let $\langle \alpha \rangle$ be the cyclic group generated by~$\alpha$.
  Then $|\langle \alpha \rangle| \ge 2$ because $\alpha \ne \id_\calA$.
  Let $\langle \alpha \rangle$ act on $\hom(\calA, \calC)$
  by $h^\alpha = h \cdot \alpha$. The orbits of this action are of the form
  $h \cdot \langle \alpha \rangle$, where $h \in \hom(\calA, \calC)$.
  It follows that $|h \cdot \langle \alpha \rangle| = |\langle \alpha \rangle| \ge 2$ because $h$ is monic.

  Let $\chi : \hom(\calA, \calC) \to 2$ be any coloring of $\hom(\calA, \calC)$ such that
  $\chi$ assumes both colors on each orbit of the action of $\langle \alpha \rangle$ on $\hom(\calA, \calC)$.
  Then for every $w : \calA \to \calC$ we have that
  $|\chi(w \cdot \hom_\CC(\calA, \calA))| \ge |\chi(w \cdot \langle \alpha \rangle)| = 2$
  because $w \cdot \langle \alpha \rangle \subseteq w \cdot \hom(\calA, \calA)$ and $\chi$ assumes both colors on each orbit.
\end{proof}

The following are easy lemmas (cf.~\cite{zucker1}):

\begin{LEM}\label{cerp.lem.easy}
  $(a)$
  If $\calC \overset{\mathit{hom}}\longrightarrow (\calB)^{\calA}_k$ and $\calB_1 \to \calB$ then $\calC \overset{\mathit{hom}}\longrightarrow (\calB_1)^{\calA}_k$.

  $(b)$
  If $\calC \longrightarrow (\calB)^{\calA}_k$ and $\calB_1 \to \calB$ then $\calC \longrightarrow (\calB_1)^{\calA}_k$.
\end{LEM}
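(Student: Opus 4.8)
The plan is to handle both parts with one observation: a witnessing morphism $e \in \hom(\calB_1, \calB)$ for $\calB_1 \to \calB$ converts a Ramsey witness for $\calB$ into one for $\calB_1$ by composing with $e$. Indeed, if $w \colon \calB \to \calC$ is a morphism that makes some colour class work for $\calB$, then $w \cdot e \colon \calB_1 \to \calC$ does the same for $\calB_1$: for any morphism $g \colon \calA \to \calB_1$ the composite $e \cdot g$ lies in $\hom(\calA, \calB)$ (and, in the setting of $(b)$, its $\sim_\calA$-class lies in $\binom{\calB}{\calA}$), where the colour is already controlled.

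Concretely, for $(a)$ I would fix $e \in \hom(\calB_1, \calB)$; the arrow $\calB_1 \to \calC$ demanded by the conclusion follows from $\calB_1 \to \calB \to \calC$, and $\calA \to \calB_1$ is taken as given (it is part of the meaning of $\calC \overset{\mathit{hom}}\longrightarrow (\calB_1)^\calA_k$). Let $\hom(\calA, \calC) = \calM_1 \union \ldots \union \calM_k$ be an arbitrary $k$-colouring. From $\calC \overset{\mathit{hom}}\longrightarrow (\calB)^\calA_k$ obtain $i$ and $w \colon \calB \to \calC$ with $w \cdot \hom(\calA, \calB) \subseteq \calM_i$, and set $w' = w \cdot e$. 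For $g \in \hom(\calA, \calB_1)$ one has $e \cdot g \in \hom(\calA, \calB)$, hence $w' \cdot g = w \cdot (e \cdot g) \in \calM_i$ by associativity; thus $w' \cdot \hom(\calA, \calB_1) \subseteq \calM_i$, as required.

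Part $(b)$ runs identically on $\sim_\calA$-classes, once one checks that $e$ induces a well-defined map $\binom{\calB_1}{\calA} \to \binom{\calB}{\calA}$ sending $f/{\sim_\calA}$ to $(e \cdot f)/{\sim_\calA}$ — which holds since $f' = f \cdot \alpha$ with $\alpha \in \Aut(\calA)$ forces $e \cdot f' = (e \cdot f) \cdot \alpha$. Then, given a $k$-colouring of $\binom{\calC}{\calA}$, take $w \colon \calB \to \calC$ with $w \cdot \binom{\calB}{\calA} \subseteq \calM_i$ and put $w' = w \cdot e$, using the identity $w' \cdot (f/{\sim_\calA}) = (w \cdot (e \cdot f))/{\sim_\calA}$ recorded in Section~\ref{cerp.sec.prelim} to conclude $w' \cdot \binom{\calB_1}{\calA} \subseteq \calM_i$. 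There is no genuine difficulty here: the argument is pure associativity of composition, and the only point worth a second glance is the well-definedness of the induced map on subobjects in $(b)$ (plus the trivial bookkeeping that the arrows appearing in the conclusion are present).
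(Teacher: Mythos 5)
Your argument is correct and is exactly the composition argument the paper has in mind when it calls these "easy lemmas" and omits the proof: precompose the monochromatic witness $w$ with a morphism $e\colon \calB_1 \to \calB$ and use associativity, together with (for part $(b)$) the well-definedness of $f/{\sim_\calA} \mapsto (e\cdot f)/{\sim_\calA}$, which you verify. Nothing is missing.
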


\begin{LEM}\label{cofinal}
  Let $\CC$ be a category whose morphisms are monic.
  If $\CC$ is has the Ramsey property for morphisms (objects)
  and $\DD$ is a full subcategory of $\CC$ such that $\Ob(\DD)$ is cofinal in $\Ob(\CC)$,
  then $\DD$ has the Ramsey property for morphisms (objects).
\end{LEM}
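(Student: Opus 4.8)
The plan is to produce the Ramsey witness for $\DD$ by \emph{pushing up} a witness found in $\CC$. Fix an integer $k \ge 2$ and objects $\calA, \calB \in \Ob(\DD)$ with $\calA \to \calB$. Since $\DD$ is a full subcategory of $\CC$, we have $\hom_\DD(\calA, \calB) = \hom_\CC(\calA, \calB) \ne \0$, so $\calA \to \calB$ in $\CC$ as well; the Ramsey property for morphisms (resp.\ for objects) of $\CC$ then yields a $\calC \in \Ob(\CC)$ with $\calC \overset{\mathit{hom}}\longrightarrow (\calB)^\calA_k$ (resp.\ $\calC \longrightarrow (\calB)^\calA_k$). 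Since $\Ob(\DD)$ is cofinal in $\Ob(\CC)$, choose $\calD \in \Ob(\DD)$ together with a morphism $e : \calC \to \calD$. I claim $\calD$ is the witness we need.

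The key step is an \emph{upward absoluteness} fact, complementary to Lemma~\ref{cerp.lem.easy}: if $\calC \overset{\mathit{hom}}\longrightarrow (\calB)^\calA_k$ and there is a morphism $e : \calC \to \calD$, then $\calD \overset{\mathit{hom}}\longrightarrow (\calB)^\calA_k$ (and likewise for $\longrightarrow$). For the morphism version: given a $k$-coloring $\chi$ of $\hom(\calA, \calD)$, pull it back along $e$ to the $k$-coloring $f \mapsto \chi(e \cdot f)$ of $\hom(\calA, \calC)$; the hypothesis provides $w' : \calB \to \calC$ and a color $i$ with $\chi(e \cdot w' \cdot h) = i$ for every $h \in \hom(\calA, \calB)$, and then $w := e \cdot w' : \calB \to \calD$ has $w \cdot \hom(\calA, \calB)$ monochromatic of color $i$. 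For the object version one runs the same argument, first observing that $e$ induces a well-defined map $e_* : \binom{\calC}{\calA} \to \binom{\calD}{\calA}$, $f/{\sim_\calA} \mapsto (e \cdot f)/{\sim_\calA}$ (well-defined because $(e\cdot f)\cdot\alpha = e\cdot(f\cdot\alpha)$), and pulling the coloring of $\binom\calD\calA$ back along $e_*$; the only point needing care is the bookkeeping with $\sim_\calA$-classes, using $w \cdot (f/{\sim_\calA}) = (w \cdot f)/{\sim_\calA}$ to see that $e \cdot w'$ again sends $\binom\calB\calA$ into a single class.

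Finally, since $\DD$ is full in $\CC$, $\hom_\DD$ coincides with $\hom_\CC$ on objects of $\DD$ and composition is inherited, so the automorphism groups $\Aut(\cdot)$, the equivalences $\sim_\calA$, the hom-sets and the sets $\binom{\cdot}{\cdot}$ are literally the same whether computed in $\DD$ or in $\CC$; hence ``$\calD \overset{\mathit{hom}}\longrightarrow (\calB)^\calA_k$'' (resp.\ ``$\calD \longrightarrow (\calB)^\calA_k$'') holds in $\DD$ precisely because it holds in $\CC$, and $\calD$ is the required witness, so $\DD$ has the Ramsey property for morphisms (resp.\ objects). I do not anticipate a serious obstacle: the argument is bookkeeping, the one thing to watch being which category each hom-set, automorphism group and $\sim_\calA$-class lives in; I note in passing that the hypothesis that the morphisms of $\CC$ be monic, although standing in force throughout this section, does not seem to be needed for this particular proof.
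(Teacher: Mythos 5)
Your proof is correct and is precisely the routine argument the paper omits (the lemma is stated there without proof as an ``easy lemma''): push the witness $\calC$ up into $\DD$ via cofinality, pull colorings back along the connecting morphism $e$, and use fullness to identify the hom-sets, automorphism groups and $\sim_\calA$-classes of $\DD$ with those of $\CC$. Your closing observation is also accurate: the hypothesis that morphisms be monic plays no role in this argument and appears to be inherited from the surrounding context (e.g.\ Proposition~\ref{cerp.prop.rigid}) rather than being needed here.
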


\section{Ramsey property, adjunctions and categorical equivalence}
\label{cerp.sec.RPA}

In this section we discuss Ramsey properties in adjunctions and prove that, under certain reasonable assumptions,
right adjoints preserve the Ramsey property for morphisms, while left adjoints preserve the dual of the Ramsey property for morphisms.
The status of the Ramsey properties for objects is delicate and is preserved by right, respectively, left
adoints under additinal assumptions on the automorphism groups of objects of the form $F(\calC)$ and $G(\calD)$.
We then treat the status of Ramsey properties in case of categorical equivalence and prove that
Ramsey properties carry over from a category to its equivalent category.
Our main results in this section are a bit technical, but we can then show how to use them to infer several
Ramsey theorems and dual Ramsey theorems.

\begin{THM}\label{cerp.thm.adj}
  Let $F : \CC \rightleftarrows \DD : G$ be an adjunction.

  $(a)$ If $\DD$ has the Ramsey property for morphisms then so does $\CC$.

  $(b)$ If $\CC$ has the dual Ramsey property for morphisms then so does $\DD$.
\end{THM}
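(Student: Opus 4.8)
The plan is to prove part $(a)$ directly from the natural bijection $\Phi_{\calC,\calD}\colon \hom_\DD(F(\calC),\calD)\cong\hom_\CC(\calC,G(\calD))$, and then obtain part $(b)$ by applying part $(a)$ to the opposite adjunction $G^{\op}\colon\DD^{\op}\rightleftarrows\CC^{\op}\colon F^{\op}$ (recall that if $F$ is left adjoint to $G$ then $G^{\op}$ is left adjoint to $F^{\op}$, so $\DD^{\op}$ having the Ramsey property for morphisms gives $\CC^{\op}$ the same, which is exactly the statement that $\DD$ has the dual Ramsey property for morphisms when $\CC$ does). So the whole content is in $(a)$.

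For $(a)$, suppose $\DD$ has the Ramsey property for morphisms. Fix $k\ge 2$ and $\calA,\calB\in\Ob(\CC)$ with $\calA\to\calB$; I want $\calC\in\Ob(\CC)$ with $\calC\overset{\mathit{hom}}\longrightarrow(\calB)^\calA_k$. The idea is to transport the problem to $\DD$ via $F$, solve it there, and pull back via $G$. First I would check that $F$ is a functor, so $\calA\to\calB$ in $\CC$ gives $F(\calA)\to F(\calB)$ in $\DD$; applying the Ramsey property for morphisms in $\DD$ yields an object $\calD\in\Ob(\DD)$ with $\calD\overset{\mathit{hom}}\longrightarrow(F(\calB))^{F(\calA)}_k$. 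Now set $\calC:=G(\calD)$. I must produce, from any $k$-coloring $\chi$ of $\hom_\CC(\calA,G(\calD))$, a witness $w\colon\calB\to G(\calD)$ with $w\cdot\hom_\CC(\calA,\calB)$ monochromatic. Using the bijection $\Phi_{\calA,\calD}$, the coloring $\chi$ transports to a $k$-coloring $\chi'$ of $\hom_\DD(F(\calA),\calD)$ by $\chi'(f):=\chi(\Phi_{\calA,\calD}(f))$. By the arrow relation in $\DD$ there is $v\colon F(\calB)\to\calD$ and a color $i$ with $v\cdot\hom_\DD(F(\calA),F(\calB))\subseteq\chi'^{-1}(i)$. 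Let $w:=\Phi_{\calB,\calD}(v)\colon\calB\to G(\calD)$. The remaining task is to verify $w\cdot\hom_\CC(\calA,\calB)\subseteq\chi^{-1}(i)$: given $g\colon\calA\to\calB$, naturality of $\Phi$ in the first variable (with respect to $g$) says $\Phi_{\calA,\calD}(v\cdot F(g))=\Phi_{\calB,\calD}(v)\cdot g=w\cdot g$; since $v\cdot F(g)\in v\cdot\hom_\DD(F(\calA),F(\calB))$ it has color $i$ under $\chi'$, i.e.\ $\Phi_{\calA,\calD}(v\cdot F(g))=w\cdot g$ has color $i$ under $\chi$, as desired.

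The one genuinely delicate point — the step I expect to be the main obstacle — is getting the naturality square to say exactly $\Phi_{\calA,\calD}(v\cdot F(g))=\Phi_{\calB,\calD}(v)\cdot g$. Naturality of $\Phi$ in the $\CC$-variable is stated for morphisms $g\colon\calC'\to\calC$ and asserts commutativity of a square relating $\hom_\DD(F(\calC),\calD)\to\hom_\DD(F(\calC'),\calD)$ (precompose with $F(g)$) and $\hom_\CC(\calC,G(\calD))\to\hom_\CC(\calC',G(\calD))$ (precompose with $g$); one must apply this with $\calC=\calB$, $\calC'=\calA$ and chase $v$ around the square, which gives precisely the identity used above. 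I would also remark that an entirely parallel argument shows $\calC\longrightarrow(\calB)^\calA_k$ is \emph{not} generally obtainable this way, because $\Phi$ need not send $\sim_{\calA}$-classes (in $\CC$) to $\sim_{F(\calA)}$-classes (in $\DD$) bijectively — this is exactly the "delicate" issue flagged in the paragraph preceding the theorem, and it is why only the morphism version is claimed here. Part $(b)$ then follows formally: apply $(a)$ to $F^{\op}\dashv$… wait — to the adjunction $G^{\op}\colon\DD^{\op}\rightleftarrows\CC^{\op}\colon F^{\op}$, noting $\Phi$ dualizes to a natural family $\hom_{\CC^{\op}}(G^{\op}(\calD),\calC)\cong\hom_{\DD^{\op}}(\calD,F^{\op}(\calC))$, so "$\CC$ has the dual RP for morphisms" means "$\CC^{\op}$ has the RP for morphisms", which by $(a)$ gives "$\DD^{\op}$ has the RP for morphisms", i.e.\ "$\DD$ has the dual RP for morphisms".
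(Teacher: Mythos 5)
Your proposal is correct and follows essentially the same route as the paper: transport the coloring of $\hom_\CC(\calA,G(\calD))$ through the adjunction isomorphism $\Phi$, apply the Ramsey property in $\DD$ to get a witness $v:F(\calB)\to\calD$, and pull it back as $w=\Phi(v)=G(v)\cdot\eta_\calB$. The only (cosmetic) difference is that the paper first establishes $G(\calD)\overset{\mathit{hom}}\longrightarrow(GF(\calB))^{\calA}_k$ and then invokes Lemma~\ref{cerp.lem.easy}~$(a)$ via $\eta_\calB:\calB\to GF(\calB)$, whereas you verify the arrow relation for $\calB$ directly by the naturality square — the same computation packaged differently.
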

\begin{proof}
  It suffices to prove $(a)$ as the proof of $(b)$ is dual.
  Let $\Phi$ be the natural isomorphism between the hom-sets.

  Take any $k \ge 2$ and any $\calA, \calB \in Ob(\CC)$ such that $\calA \to \calB$. Then
  $F(\calA) \to F(\calB)$. Since $\DD$ has the Ramsey property for morphisms, there is
  a $\calC \in \Ob(\DD)$ such that $\calC \overset{\mathit{hom}}\longrightarrow (F(\calB))^{F(\calA)}_k$.
  Let us show that $G(\calC) \overset{\mathit{hom}}\longrightarrow (GF(\calB))^{\calA}_k$.
  Take any $k$-coloring
  $$
    \hom(\calA, G(\calC)) = \calM_1 \union \ldots \union \calM_k.
  $$
  By applying $\Phi^{-1}$ and having in mind that
  $\Phi^{-1}(\hom(\calA, G(\calC))) = \hom(F(\calA), \calC)$ we obtain
  $$
    \hom(F(\calA), \calC) = \Phi^{-1}(\calM_1) \union \ldots \union \Phi^{-1}(\calM_k).
  $$
  Since $\Phi$ is bijective, the above is actually a $k$-coloring of $\hom(F(\calA), \calC)$, so there is an $i$
  and a morphism $w : F(\calB) \to \calC$ such that
  $$
    w \cdot \hom(F(\calA), F(\calB)) \subseteq \Phi^{-1}(\calM_i).
  $$
  After applying $G$ and multiplying by $\eta_\calA$ from the right we have
  $$
    G(w) \cdot G(\hom(F(\calA), F(\calB))) \cdot \eta_\calA \subseteq G(\Phi^{-1}(\calM_i)) \cdot \eta_\calA.
  $$
  Loosely speaking, for any set of morphisms $\calM$ we have that $G(\calM) \cdot \eta = \Phi(\calM)$, so
  the above relation transforms to
  $$
    G(w) \cdot \Phi(\hom(F(\calA), F(\calB))) \subseteq \Phi(\Phi^{-1}(\calM_i)),
  $$
  or, equivalently,
  $$
    G(w) \cdot \hom(\calA, GF(\calB)) \subseteq \calM_i.
  $$
  This completes the proof that $G(\calC) \overset{\mathit{hom}}\longrightarrow (GF(\calB))^{\calA}_k$.

  Since $\eta_\calB : \calB \to GF(\calB)$, Lemma~\ref{cerp.lem.easy}~$(a)$ ensures that
  $G(\calC) \overset{\mathit{hom}}\longrightarrow (\calB)^{\calA}_k$. Therefore, $\CC$ has the Ramsey property for morphisms.
\end{proof}

The analogous statement for objects need not be true in general
becuase, in general, the unit and the counit do not consist of isomorphisms.
The following lemma provides a sufficient condition for adjunctions to preserve Ramsey property for objects
but the additional condition we impose is rather strong.

\begin{LEM}\label{cerp.lem.adj2}
  Let $F : \CC \rightleftarrows \DD : G$ be an adjunction.

  $(a)$ Assume that $\Aut(F(\calA)) = F(\Aut(\calA))$ for all
  $\calA \in \Ob(\CC)$. If $\DD$ has the Ramsey property for objects then so does $\CC$.

  $(b)$ Assume that $\Aut(G(\calB)) = G(\Aut(\calB))$ for all
  $\calB \in \Ob(\DD)$. If $\CC$ has the dual Ramsey property for objects then so does $\DD$.
\end{LEM}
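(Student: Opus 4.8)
The plan is to mimic the proof of Theorem~\ref{cerp.thm.adj}~$(a)$, keeping careful track of how $\Phi$ interacts with the equivalence relations $\sim_\calA$ on hom-sets, since that is the new ingredient needed to pass from the ``hom'' Ramsey arrow to the ``object'' Ramsey arrow $\longrightarrow$. First I would fix $k \ge 2$ and $\calA, \calB \in \Ob(\CC)$ with $\calA \to \calB$, apply $F$ to get $F(\calA) \to F(\calB)$, and invoke the Ramsey property for objects in $\DD$ to obtain $\calC \in \Ob(\DD)$ with $\calC \longrightarrow (F(\calB))^{F(\calA)}_k$. The goal is to show $G(\calC) \longrightarrow (GF(\calB))^{\calA}_k$ and then finish exactly as before via Lemma~\ref{cerp.lem.easy}~$(b)$ using $\eta_\calB : \calB \to GF(\calB)$.

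The key step is the translation of colorings. A $k$-coloring of $\binom{G(\calC)}{\calA}$ is the same as a $k$-coloring of $\hom(\calA, G(\calC))$ that is constant on each $\sim_\calA$-class, i.e.\ on each orbit $h \cdot \Aut(\calA)$. Pulling back along $\Phi^{-1} : \hom(\calA, G(\calC)) \to \hom(F(\calA), \calC)$ gives a $k$-coloring of $\hom(F(\calA), \calC)$, and here is where the hypothesis $\Aut(F(\calA)) = F(\Aut(\calA))$ is used: since $\Phi^{-1}(h \cdot \alpha) = \Phi^{-1}(h) \cdot F(\alpha)$ for $\alpha \in \Aut(\calA)$ (naturality of $\Phi$ in the first variable, i.e.\ $\Phi^{-1}$ respects precomposition), the $\Phi^{-1}$-image of a $\sim_\calA$-orbit $h \cdot \Aut(\calA)$ is $\Phi^{-1}(h) \cdot F(\Aut(\calA)) = \Phi^{-1}(h) \cdot \Aut(F(\calA))$, which is exactly a $\sim_{F(\calA)}$-orbit. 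Hence the pulled-back coloring descends to a genuine $k$-coloring of $\binom{\calC}{F(\calA)}$. Applying $\calC \longrightarrow (F(\calB))^{F(\calA)}_k$ yields $i$ and $w : F(\calB) \to \calC$ with $w \cdot \binom{F(\calB)}{F(\calA)} \subseteq \calM_i'$, where $\calM_i'$ is the $i$-th color class of the $\hom$-level coloring of $\hom(F(\calA),\calC)$; equivalently $w \cdot \hom(F(\calA), F(\calB)) \subseteq \Phi^{-1}(\calM_i)$ with $\calM_i$ the original color class on $\hom(\calA, G(\calC))$.

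Then I would run the same computation as in Theorem~\ref{cerp.thm.adj}: apply $G$, postcompose with $G(w)$, precompose the whole thing with $\eta_\calA$ on the right, use $G(\calN)\cdot\eta_\calA = \Phi(\calN)$ together with $\Phi(\hom(F(\calA),F(\calB))) = \hom(\calA, GF(\calB))$ to arrive at $G(w) \cdot \hom(\calA, GF(\calB)) \subseteq \calM_i$. Since the original coloring was constant on $\sim_\calA$-orbits this says $G(w) \cdot \binom{GF(\calB)}{\calA} \subseteq \calM_i$, establishing $G(\calC) \longrightarrow (GF(\calB))^{\calA}_k$, and Lemma~\ref{cerp.lem.easy}~$(b)$ with $\eta_\calB : \calB \to GF(\calB)$ gives $G(\calC) \longrightarrow (\calB)^{\calA}_k$, so $\CC$ has the Ramsey property for objects. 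Part $(b)$ is then immediate by dualizing: apply $(a)$ to the adjunction $G^\op : \DD^\op \rightleftarrows \CC^\op : F^\op$, noting the hypothesis $\Aut(G(\calB)) = G(\Aut(\calB))$ is precisely what $(a)$ requires of the left adjoint $G^\op$. The main obstacle I anticipate is purely bookkeeping: verifying that $\Phi$ and $\Phi^{-1}$ genuinely intertwine the right-action of $\Aut(\calA)$ with that of $\Aut(F(\calA))$ under the hypothesis, and confirming that no orbit gets split or merged so that the coloring really does factor through both $\binom{\calC}{F(\calA)}$ and $\binom{G(\calC)}{\calA}$ — once that naturality bookkeeping is pinned down, the rest is the already-established argument.
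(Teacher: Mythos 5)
Your proposal is correct and follows essentially the same route as the paper: the paper reduces part $(a)$ to the identity $\Phi(f/\Boxed{\sim_{F(\calA)}}) = \Phi(f)/\Boxed{\sim_\calA}$, proved via $\Phi(f\cdot\alpha) = G(f)\cdot GF(\beta)\cdot\eta_\calA = \Phi(f)\cdot\beta$ using the hypothesis $\Aut(F(\calA)) = F(\Aut(\calA))$ and naturality of $\eta$, which is exactly your observation (stated via $\Phi^{-1}$) that the adjunction bijection carries $\sim_\calA$-orbits onto $\sim_{F(\calA)}$-orbits so that colorings of $\binom{G(\calC)}{\calA}$ transfer to colorings of $\binom{\calC}{F(\calA)}$. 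The remainder of your argument, including the dualization for $(b)$, matches the paper's.
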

\begin{proof}
  Again, we shall focus on $(a)$ because the proof of $(b)$ is dual. The proof of $(a)$, however, is analogous
  to the proof of $(a)$ in Theorem~\ref{cerp.thm.adj} provided we can show that
  $$
    \Phi\left(\binom{\calB}{F(\calA)}\right) = \binom{G(\calB)}{\calA},
  $$
  or, equivalently,
  $$
    \Phi\big(\hom(F(\calA), \calB) / \Boxed{\sim_{F(\calA)}}\big) = \hom(\calA, G(B)) / \Boxed{\sim_\calA}.
  $$
  This relation clearly follows from
  \begin{equation}\label{cerp.eq.adj}
    \Phi(f / \Boxed{\sim_{F(\calA)}}) = \Phi(f) / \Boxed{\sim_\calA} \text{ for all }
    f \in \hom(F(\calA), \calB).
  \end{equation}
  Let us show~\eqref{cerp.eq.adj}.

  $(\subseteq)$
  Take any $f \in \hom(F(\calA), \calB)$ and
  any $g \in \Phi(f / \Boxed{\sim_{F(\calA)}})$. Then $g = \Phi(f \cdot \alpha)$ for some $\alpha \in \Aut(F(\calA))$.
  Then $g = G(f) \cdot G(\alpha) \cdot \eta_\calA$. Since $\alpha \in \Aut(F(\calA)) = F(\Aut(\calA))$ there is
  a $\beta \in \Aut(\calA)$ such that $\alpha = F(\beta)$, so
  \begin{align*}
    g &= G(f) \cdot G(\alpha) \cdot \eta_\calA\\
      &= G(f) \cdot GF(\beta) \cdot \eta_\calA\\
      &= G(f) \cdot \eta_\calA \cdot \beta \text{\quad[because $\eta$ is natural]}\\
      &= \Phi(f) \cdot \beta \in \Phi(f) / \Boxed{\sim_\calA}.
  \end{align*}

  $(\supseteq)$
  Analogous to $(\subseteq)$.
\end{proof}

We thank Christian Rosendal for letting us include the following example from his unpublished notes.

\begin{figure}
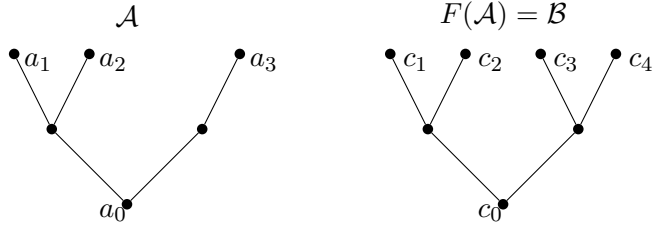

\begin{pgfpicture}{-4.5cm}{0cm}{0cm}{0cm}

\pgfputat{\pgfxy(-1,2.5)}{\pgfbox[center,center]{$\calA$}}

\pgfnodecircle{Node0}[fill]{\pgfxy(-1,0)}{.07cm}
\pgfnodecircle{Node1}[fill]{\pgfxy(-2,1)}{.07cm}
\pgfnodecircle{Node2}[fill]{\pgfxy(-2.5,2)}{.07cm}
\pgfnodecircle{Node3}[fill]{\pgfxy(-1.5,2)}{.07cm}

\pgfnodecircle{Node4}[fill]{\pgfxy(0,1)}{.07cm}
\pgfnodecircle{Node6}[fill]{\pgfxy(0.5,2)}{.07cm}

\pgfnodeconnline{Node0}{Node1}
\pgfnodeconnline{Node0}{Node4}
\pgfnodeconnline{Node1}{Node2}
\pgfnodeconnline{Node1}{Node3}
\pgfnodeconnline{Node4}{Node6}

\pgfputat{\pgfxy(-1,0)}{\pgfbox[right,top]{$a_0$}}
\pgfputat{\pgfxy(-1,2)}{\pgfbox[right,top]{$a_2$}}
\pgfputat{\pgfxy(-2,2)}{\pgfbox[right,top]{$a_1$}}
\pgfputat{\pgfxy(1,2)}{\pgfbox[right,top]{$a_3$}}

\pgfputat{\pgfxy(4,2.5)}{\pgfbox[center,center]{$F(\calA) = \calB$}}

\pgfnodecircle{Node20}[fill]{\pgfxy(4,0)}{.07cm}
\pgfnodecircle{Node21}[fill]{\pgfxy(3,1)}{.07cm}
\pgfnodecircle{Node22}[fill]{\pgfxy(2.5,2)}{.07cm}
\pgfnodecircle{Node23}[fill]{\pgfxy(3.5,2)}{.07cm}

\pgfnodecircle{Node24}[fill]{\pgfxy(5,1)}{.07cm}
\pgfnodecircle{Node25}[fill]{\pgfxy(4.5,2)}{.07cm}
\pgfnodecircle{Node26}[fill]{\pgfxy(5.5,2)}{.07cm}

\pgfnodeconnline{Node20}{Node21}
\pgfnodeconnline{Node20}{Node24}
\pgfnodeconnline{Node21}{Node22}
\pgfnodeconnline{Node21}{Node23}
\pgfnodeconnline{Node24}{Node25}
\pgfnodeconnline{Node24}{Node26}

\pgfputat{\pgfxy(4,0)}{\pgfbox[right,top]{$c_0$}}
\pgfputat{\pgfxy(4,2)}{\pgfbox[right,top]{$c_2$}}
\pgfputat{\pgfxy(3,2)}{\pgfbox[right,top]{$c_1$}}
\pgfputat{\pgfxy(5,2)}{\pgfbox[right,top]{$c_3$}}
\pgfputat{\pgfxy(6,2)}{\pgfbox[right,top]{$c_4$}}
\end{pgfpicture}

\caption{Unordered trees\label{untree}}
\end{figure}

\begin{EX} (Homogeneous trees.)  Here we present an example that shows the importance of the assumption in Lemma \ref{cerp.lem.adj2}~(a).
Let $\CC$ be the category of trees, finite structures in the language $\{f\}$ where $f(a) = b$ if $b$ is the immediate predecessor of $a$ in the partial tree order.  Let $\DD \subset \CC$ be the class of homogeneous trees, trees where the branching number at a node is a
function of the level of the node, i.e. for every $n < \omega$, there is some $b(n) < \omega$ such that every node at level $n$
has exactly $b(n)$ immediate successors at level $n+1$.  Let $\CC_<$, $\DD_<$ be the corresponding categories of finite
ordered trees (structures in the language $\{f, \Boxed<\}$ where $<$ gives the linear extension of the partial tree order).
It is known that $\CC_<$ has the Ramsey property for objects/morphisms, see \cite{fou} and see \cite{scow} for a discussion.  In Rosendal's notes it is shown that $\DD$ has the Ramsey property for objects.  This follows by Lemma \ref{cofinal} and \cite[Proposition 5.6]{KPT} as $\DD_<$ is a cofinal full subcategory of $\CC_<$ and it is order forgetful.

This example illustrates a case where we have an adjunction  $F : \CC \rightleftarrows \DD : G$ but     $\Phi\big(\hom(F(\calA), \calB) / \Boxed{\sim_{F(\calA)}}\big) = \hom(\calA, G(B)) / \Boxed{\sim_\calA}$
 fails.  $F$ takes $\calA \in \Ob(\CC)$ to the smallest homogeneous tree containing $\calA$ and $G$ gives the inclusion of $\DD$ in $\CC$.  Consider $\calA$ as given in Figure \ref{untree} and let $\calB = F(\calA)$.
Then $|\hom(F(\calA), \calB) / \Boxed{\sim_{F(\calA)}} |=1$ but $|\hom(\calA, G(B)) / \Boxed{\sim_\calA}| = 4$: a map can send $(a_1,a_2,a_3)$ to any of $(c_1,c_2,c_3), (c_1,c_2,c_4), (c_1,c_3,c_4), (c_2,c_3,c_4)$, up to automorphism of $\calA$.

Notice that this is also a case where $\DD$ has the Ramsey property for objects but $\CC$ does not.  Rosendal provides the example from Figure \ref{untree} in his notes.  If we impose an ordering $<$ on all members of $\CC$ and color copies of $\calA$ in $\calB$ according to whether the pair $(a_1,a_2)$ occurs $<$ then $a_3$ or vice versa, then we cannot find a homogeneous copy of $\calB$.
\end{EX}

We shall now move on to categorical equivalence which behaves more nicely with respect to Ramsey properties.

\begin{THM}\label{cerp.thm.dual}
  Let $\CC$ and $\DD$ be equivalent categories.
  Then $\CC$ has the Ramsey property for objects (morphisms) if and only if $\DD$ does.

  In particular, if $\CC$ and $\DD$ are dually equivalent and one of them has the Ramsey property for
  morphisms (objects), the other has the dual Ramsey property for morphisms (objects).
\end{THM}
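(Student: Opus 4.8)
The plan is to reduce the statement to the adjunction results already established, namely Theorem~\ref{cerp.thm.adj} and Lemma~\ref{cerp.lem.adj2}, using the facts recalled above that a categorical equivalence is a particular form of adjunction and that the relation ``$\CC$ and $\DD$ are equivalent'' is symmetric.

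I would begin with the Ramsey property for morphisms. Let $E : \CC \to \DD$ witness the equivalence, with pseudoinverse $H : \DD \to \CC$ and natural isomorphisms $\eta : \ID_\CC \to HE$ and $\epsilon : \ID_\DD \to EH$. Since $E$ is full, faithful and isomorphism-dense, it is part of an adjunction $E : \CC \rightleftarrows \DD : H$ (after, if necessary, replacing $\epsilon$ by an adjusted natural isomorphism so that the unit--counit identities hold, which is standard). Theorem~\ref{cerp.thm.adj}~$(a)$ then gives: if $\DD$ has the Ramsey property for morphisms, so does $\CC$. For the converse, note that $\DD$ and $\CC$ are equivalent with the roles reversed, $H$ being a pseudoinverse of $E$, so there is likewise an adjunction $H : \DD \rightleftarrows \CC : E$, and Theorem~\ref{cerp.thm.adj}~$(a)$ applied to it yields: if $\CC$ has the Ramsey property for morphisms, so does $\DD$. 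This settles the morphisms case.

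For the Ramsey property for objects I would invoke Lemma~\ref{cerp.lem.adj2}~$(a)$, but its hypothesis must first be checked, namely that $\Aut(E(\calA)) = E(\Aut(\calA))$ for all $\calA \in \Ob(\CC)$ (and, symmetrically, $\Aut(H(\calB)) = H(\Aut(\calB))$ for all $\calB \in \Ob(\DD)$). This is the one point requiring a small argument: since $E$ is full and faithful, $g \mapsto E(g)$ is a bijection $\hom_\CC(\calA,\calA) \to \hom_\DD(E(\calA),E(\calA))$, and, $E$ being a functor, it preserves identities and composition, so this bijection is an isomorphism of monoids; an isomorphism of monoids restricts to a bijection between the groups of invertible elements, whence $E(\Aut(\calA)) = \Aut(E(\calA))$. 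With this in hand, Lemma~\ref{cerp.lem.adj2}~$(a)$ applied to $E : \CC \rightleftarrows \DD : H$ gives ``$\DD$ has the Ramsey property for objects $\Rightarrow$ $\CC$ does,'' and applied to $H : \DD \rightleftarrows \CC : E$ gives the converse.

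Finally, for the ``in particular'' clause: if $\CC$ and $\DD$ are dually equivalent then, by definition, $\CC$ and $\DD^\op$ are equivalent, so by the first part $\CC$ has the Ramsey property for morphisms (objects) iff $\DD^\op$ does; and ``$\DD^\op$ has the Ramsey property for morphisms (objects)'' is exactly the definition of ``$\DD$ has the dual Ramsey property for morphisms (objects),'' with the symmetric direction handled identically. I expect the only genuinely non-mechanical step to be the verification that a full, faithful functor carries automorphism groups onto automorphism groups; everything else is a direct appeal to Theorem~\ref{cerp.thm.adj} and Lemma~\ref{cerp.lem.adj2}, the only additional care being to apply them in both directions rather than just one.
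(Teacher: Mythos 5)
Your proposal is correct, and for the morphisms half it coincides with the paper (which likewise just defers to Theorem~\ref{cerp.thm.adj}). For the objects half, however, you take a genuinely different route: the paper does \emph{not} invoke Lemma~\ref{cerp.lem.adj2} but instead re-runs the whole argument directly, transporting a colouring of $\binom{H(\calC)}{\calA}$ along $E$, pulling it back through $\epsilon_\calC^{-1}$ to a colouring of $\binom{\calC}{E(\calA)}$, and then pushing the monochromatic witness $w$ back to $w^* = H(w)\cdot\eta_\calB$ by an explicit computation with the equivalence-classes modulo $\sim_\calA$. You instead observe that a full and faithful functor induces a monoid isomorphism $\hom_\CC(\calA,\calA)\to\hom_\DD(E(\calA),E(\calA))$ and hence restricts to a bijection of the groups of units, so $\Aut(E(\calA)) = E(\Aut(\calA))$, which is exactly the hypothesis of Lemma~\ref{cerp.lem.adj2}~$(a)$; applying that lemma to $E\dashv H$ and to $H\dashv E$ gives both implications. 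This is valid and arguably the more economical decomposition, since it isolates precisely where fullness and faithfulness enter; the one point to be careful about (which you do flag) is that Lemma~\ref{cerp.lem.adj2} is stated for an adjunction whose unit satisfies the unit--counit identities, so one must either invoke the standard improvement of an equivalence to an adjoint equivalence or note that the lemma's proof uses only naturality of $\eta$ and the formula $\Phi(f) = H(f)\cdot\eta_\calA$, which is available for any equivalence. What the paper's self-contained computation buys in exchange is independence from that rigidification step and from the (only partially written out) proof of Lemma~\ref{cerp.lem.adj2}; what yours buys is brevity and a reusable observation about equivalences and automorphism groups. Your treatment of the ``in particular'' clause via the definition of dual equivalence and of the dual Ramsey property is exactly what the paper leaves implicit.
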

\begin{proof}
  Let us prove the statement in case of objects as the proof in case of
  morphisms is analogous and follows from Theorem~\ref{cerp.thm.adj}.

  Let $E : \CC \to \DD$ and $H : \DD \to \CC$ be functors that constitute the equivalence between $\CC$ and $\DD$
  and let $\eta : \ID_\CC \to HE$ and $\epsilon : \ID_\DD \to EH$ be the accompanying natural isomorphisms.
  Assume that $\DD$ has the Ramsey property and let us show that $\CC$ has the Ramsey property (the other direction is analogous).
  Take any positive integer $k$
  and let $\calA \to \calB$ in $\CC$. Then $E(\calA) \to E(\calB)$ in $\DD$,
  so there is a $\calC \in \Ob(\DD)$ such that
  \begin{equation}\label{gaif.eq.RP1}
    \calC \longrightarrow (E(\calB))^{E(\calA)}_k.
  \end{equation}
  Let us show that
  $
    H(\calC) \longrightarrow (\calB)^{\calA}_k
  $
  in $\CC$. Note first that $\calB \to H(\calC)$ because $E(\calB) \to \calC$, whence
  $\calB \cong HE(\calB) \to H(\calC)$. Let
  $$
    \binom{H(\calC)}{\calA} = \calM_1 \union \ldots \union \calM_k
  $$
  be an arbitrary $k$-coloring. Let
  $$
    \calM_i^E = \{ E(f) / \Boxed{\sim_{E(\calA)}} : f / \Boxed{\sim_{\calA}} \in \calM_i\}.
  $$
  Then it is easy to show that
  $$
    \binom{EH(\calC)}{E(\calA)} = \calM_1^E \union \ldots \union \calM_k^E
  $$
  is a $k$-coloring. Having in mind that
  $\epsilon_{\calC} : \calC \to EH(\calC)$ is an isomorphism, we have that
  $$
    \binom{\calC}{E(\calA)}
      = \epsilon_{\calC}^{-1} \cdot \calM_1^E  \union \ldots \union \epsilon_{\calC}^{-1} \cdot \calM_k^E
  $$
  is also a $k$-coloring. From \eqref{gaif.eq.RP1} we know that there is a
  $w : E(\calB) \to \calC$ and a color $i$ such that
  \begin{equation}\label{gaif.eq.RP2}
    w \cdot \binom{E(\calB)}{E(\calA)} \subseteq \epsilon_{\calC}^{-1} \cdot \calM_i^E.
  \end{equation}
  Let $w^* = H(w) \cdot \eta_{\calB} : \calB \to H(\calC)$ and let us show that
  \begin{equation}\label{gaif.eq.RP3}
    w^* \cdot \binom{\calB}{\calA} \subseteq \calM_i.
  \end{equation}
  Take any $u / \Boxed{\sim_\calA} \in \binom{\calB}{\calA}$. Then
  \begin{align*}
    w^* \cdot (u / \Boxed{\sim_\calA})
    & = (w^* \cdot u) / \Boxed{\sim_\calA}\\
    &= (H(w) \cdot \eta_{\calB} \cdot u) / \Boxed{\sim_\calA}\\
    &= (H(w) \cdot HE(u) \cdot \eta_{\calA}) / \Boxed{\sim_\calA}\\
    &= H(w) \cdot (HE(u) / \Boxed{\sim_{HE(\calA)}}) \cdot \eta_{\calA}
  \end{align*}
  because $\eta : \ID_\CC \to HE$ is natural.
  On the other hand, \eqref{gaif.eq.RP2} implies
  $$
    H(w) \cdot \binom{HE(\calB)}{HE(\calA)} \subseteq H(\epsilon_{\calC}^{-1}) \cdot \calM_i^{HE},
  $$
  where
  \begin{align*}
    \calM_i^{HE}
    &= \{ HE(f) / \Boxed{\sim_{HE(\calA)}} : E(f) / \Boxed{\sim_{E(\calA)}} \in \calM_i^E\}\\
    &= \{ HE(f) / \Boxed{\sim_{HE(\calA)}} : f / \Boxed{\sim_{\calA}} \in \calM_i\}.
  \end{align*}
  So, there is an $m / \Boxed{\sim_{\calA}} \in \calM_i$ such that
  \begin{align*}
    w^* \cdot (u / \Boxed{\sim_{\calA}})
    &= H(w) \cdot (HE(u) / \Boxed{\sim_{HE(\calA)}}) \cdot \eta_{\calA}\\
    &= H(\epsilon_{\calC}^{-1}) \cdot (HE(m) / \Boxed{\sim_{HE(\calA)}}) \cdot \eta_{\calA}\\
    &= (H(\epsilon_{\calC}^{-1}) \cdot HE(m) \cdot \eta_{\calA}) / \Boxed{\sim_{\calA}}.
  \end{align*}
  In order to complete the proof of \eqref{gaif.eq.RP3} it suffices to note that
  $H(\epsilon_{\calC}^{-1}) \cdot HE(m) \cdot \eta_{\calA} = m$ because every
  dual equivalence is a special dual adjunction.
  Therefore, $w^* \cdot (u / \Boxed{\sim_{\calA}}) = m / \Boxed{\sim_{\calA}} \in \calM_i$.
\end{proof}

Let us now show that this statement is a proper generalization of \cite[Proposition 9.1 (i)]{KPT}.

\begin{EX}\label{cerp.ex.fba}
    The category $\FinSetInj$ of finite sets and injective maps is dually equivalent to the category
    $\FinBaSurj$ of finite boolean algebras and surjective homomorphisms (Stone duality).
    Since $\FinSetInj$ has the Ramsey property for objects (Example~\ref{cerp.ex.FRP}), it follows that
    the category $\FinBaSurj$ has the dual Ramsey property for objects.

    Let us make this statement explicit. Let $\CC = \FinBaSurj$. For $\calA, \calB \in \Ob(\CC)$
    let $\Surj(\calB, \calA)$ denote the set of all surjective homomorphisms $\calB \twoheadrightarrow \calA$.
    Define $\equiv_\calA$ on $\Surj(\calB, \calA)$ as follows: for $f, f' \in \Surj(\calB, \calA)$ we let
    $f \equiv_\calA f'$ if $f' = \alpha \circ f$ for some $\alpha \in \Aut(\calA)$.

    As in the Example~\ref{cerp.ex.FDRT},
    the fact that $\CC^\op$ has the Ramsey property for objects takes the following form:
    for every integer $k \ge 2$ and all finite boolean algebras $\calA$ and $\calB$
    such that $\Surj(\calB, \calA) \ne \0$ there is a finite boolean algebra
    $\calC$ such that for every $k$-coloring
    $$
      \Surj(\calC, \calA) / \Boxed{\equiv_\calA} \; = \; \calM_1 \union \ldots \union \calM_k
    $$
    there is an $i \in \{1, \ldots, k\}$ and a surjective homomorphism
    $w \in \Surj(\calC, \calB)$ satisfying
    $(\Surj(\calB, \calA) / \Boxed{\equiv_\calA}) \circ w \subseteq \calM_i$.
    Since $\Surj(\calB, \calA) / \Boxed{\equiv_\calA}$ corresponds to congruences $\Phi$ of $\calB$
    such that $\calB / \Phi \cong \calA$ the above statement
    can be reformulated as follows:
    \begin{quote}
      Let $\Con(\calB)$ denote the set of congruences of an algebra $\calB$, and
      for algebras $\calA$ and $\calB$ of the same type let
      $$
        \Con(\calB, \calA) = \{ \Phi \in \Con(\calB) : \calB / \Phi \cong \calA \}.
      $$
      For every finite bolean algebra $\calB$, every $\Phi \in \Con(\calB)$ and
      every $k \ge 2$ there is a finite boolean algebra $\calC$ such that for every
      $k$-coloring of $\Con(\calC, \calB / \Phi)$ there is a congruence
      $\Psi \in \Con(\calC, \calB)$ such that the set of all the congruences from
      $\Con(\calC, \calB / \Phi)$ which contain $\Psi$ is monochromatic.
    \end{quote}
\end{EX}

\begin{EX}\label{cerp.ex.hu}
  By Hu's theorem \cite{hu1,hu2}, every variety generated by a primal algebra is
  categorically equivalent to the variety of boolean algebras. In particular, the
  category $\BA$ whose objects are finite boolean algebras and morphisms are embeddings
  is equivalent to the category $\VV(\calQ)$ whose objects are
  finite algebras in the variety generated by a primal algebra $\calQ$
  and morphisms are embeddings. (Recall that every primal algebra is finite.)
  Therefore, Theorem~\ref{cerp.thm.dual} and Example~\ref{cerp.ex.fba}
  imply that the category $\VV(\calQ)$
  has the Ramsey property for objects for every primal algebra~$\calQ$.
  In other words, we have the following \textit{Ramsey theorem for finite algebras in the variety generated
  by a primal algebra}:
  \begin{quote}
    For every primal algebra $\calQ$,
    for all $\calA, \calB \in \VV(\calQ)$ such that $\calA \hookrightarrow \calB$
    and every $k \ge 2$ there is a $\calC \in \VV(\calQ)$ such that $\calC \longrightarrow (\calB)^{\calA}_k$.
  \end{quote}
  We treat this topic in more detail in Sections~\ref{cerp.sec.primal}, \ref{cerp.sec.fraisse} and~\ref{cerp.sec.oetd}.
\end{EX}

\begin{EX}
  Let $\CC_1$ be the category whose objects are finitely dimensional vector spaces over a fixed finite field $F$
  and whose morphisms are injective linear maps, and
  let $\CC_2$ be the category having the same objects and whose morphisms are surjective linear maps.
  We additionally assume that all the vector spaces in $\CC_1$ and $\CC_2$
  are endowed with the standard inner product $\langle \tilde x, \tilde y\rangle = \sum_i x_i y_i$.
  We know from college algebra that $\CC_1$ is dually equivalent with $\CC_2$ via the functor which
  takes a vector space $V$ to itself and takes a linear map $T$ to its adjoint map $T^*$.
  The Ramsey Theorem for Vector Spaces due to Graham, Leeb and Rotschild (see~\cite{GRS})
  says that the category $\CC_1$ has the Ramsey property for objects.
  Therefore, Theorem~\ref{cerp.thm.dual}~$(c)$ implies that the category
  $\CC_2$ has the dual Ramsey property for objects. In other words, let $F$ be a finite field, and
  for a finite vector space $V$ over $F$ let
  ${\quotient Vd}_{\mathrm{lin}}$ be the set of all partitions of $V$ of the form $V/W = \{v + W : v \in V\}$ where
  $W$ is a subspace of~$V$ of codimension $d$ (so that $\dim(V/W) = d$). Then we have the following \textit{dual Ramsey theorem for
  finite vector spaces}:
  \begin{quote}
    For all positive integers $k$, $a$, $m$ there is a positive integer $n$ such that
    for every $n$-dimensional vector space $V$ over $F$
    and every $k$-coloring of the set ${\quotient Va}_{\mathrm{lin}}$
    there is a partition $\beta \in {\quotient Vm}_{\mathrm{lin}}$ such that
    the set of all patitions from ${\quotient Va}_{\mathrm{lin}}$
    which are coarser than $\beta$ is monochromatic.
  \end{quote}
\end{EX}

\begin{EX}
  The category of finite distributive lattices with lattice embeddings
  does not have the Ramsey property (see~\cite{Promel-Voigt-2}).
  It is quite common that after expanding the structures with appropriatelly chosen linear orders,
  the resulting class of expanded structures has the Ramsey property. Moreover,
  in~\cite{kechris-sokic} the authors prove that
  no expansion of the class of finite distributive lattices by linear orders satisfies the
  Ramsey property.

  Since the class of finite posets is dually equivalent to the class of finite
  distributive lattices (Birkhoff duality), and since the category
  of finite posets and poset embeddings does not have the
  Ramsey property, it follows by Theorem~\ref{cerp.thm.dual} that
  the class of finite distributive lattices and surjective homomorphisms
  does not have the dual Ramsey property either.
  However, it is possible to derive a dual Ramsey
  theorem for finite distributive lattices endowed with a particular linear order which we refer to as
  the \emph{natural order}. For details see~\cite{masul-mudri}.

  On the other hand, since the category of finite distributive lattices with lattice embeddings
  does not have the Ramsey property it follows that the category of finite posets and surjective
  homomorphisms does not have the dual Ramsey property.
\end{EX}

\section{Primal algebras}
\label{cerp.sec.primal}

Let $\calB$ be a finite boolean algebra and let $A = \{ a_1, a_2, \ldots, a_n \}$ be the set of atoms of~$\calB$.
Every linear order $<$ on $A$, say $a_{i_1} < a_{i_2} < \ldots < a_{i_n}$, induces a linear order on $\calB$ as follows.
Take any $x, y \in B$, and let
$x = \delta_1 \cdot a_{i_1} \lor \delta_2 \cdot a_{i_2} \lor \ldots \lor \delta_n \cdot a_{i_n}$ and
$y = \epsilon_1 \cdot a_{i_1} \lor \epsilon_2 \cdot a_{i_2} \lor \ldots \lor \epsilon_n \cdot a_{i_n}$
be the representations of $x$ and $y$, respectively, where $\epsilon_s, \delta_s \in \{0, 1\}$ and with the convention that
$0 \cdot b = 0$ while $1 \cdot b = b$, $b \in B$. We then say that $x \mathrel{\sqsubset} y$ if
there is an $s$ such that $\delta_s < \epsilon_s$, and $\delta_t = \epsilon_t$ for all $t > s$.
In other words, $\sqsubset$ is the \emph{antilexicographic ordering of the elements of $B$} with respect to~$<$.
The choice of the antilexicographic ordering induced by $<$ is motivated by the fact that
the antilexicographic ordering of a bollean algebra is a linear ordering on the algebra that extends
the initial ordering on the atoms (that is, $a_i < a_j$ implies $a_i \sqsubset a_j$).
A linear ordering $\sqsubset$ of a finite boolean algebra $\calB$ is \emph{natural}~\cite{KPT} if there is a linear ordering $<$
on atoms of the algebra such that $\sqsubset$ is the antilexicographic ordering of the elements of $B$ with respect to~$<$.
Let $\OBA$ denote the category whose objects are finite boolean algebras together with a natural linear order
and morphisms are embeddings.

This notion easily generalizes to arbitrary powers of finite algebras.
Let $\calA$ be a finite algebra and let $<$ be an arbitrary linear order on $A$. For every $n \in \NN$
this linear order induces the antilexicographic order $\sqsubset$ on $A^n$ in the usual sense:
$(x_1, \ldots, x_n) \sqsubset (y_1, \ldots, y_n)$ if there is an $s$ such that
$x_i = y_i$ for $i > s$ and $x_s < y_s$.
For every permutation $\pi$ of $\{1, 2, \ldots, n\}$ we also have a linear order $\sqsubset_\pi$ defined by
$(x_1, \ldots, x_n) \sqsubset_\pi (y_1, \ldots, y_n)$ if $(x_{\pi(1)}, \ldots, x_{\pi(n)}) \sqsubset (y_{\pi(1)}, \ldots, y_{\pi(n)})$.
Let $\sqsubseteq$ and $\sqsubseteq_\pi$ denote the reflexive versions of $\sqsubset$ and $\sqsubset_\pi$, respectively.

Let $\calA$ be a primal algebra. (Recall that every primal algebra is finite and has at least two elements.)
It is a well-known fact (see \cite{burris-sankappanavar,jezek} for details on the structure of a variety of algebras
generated by a primal algebra) that if $\calA$ is a primal algebra and $n, m \in \NN$, a mapping $f : A^n \to A^m$ is a homomorphism
from $\calA^n$ to $\calA^m$ if and only if there exist $i_1, \ldots, i_m \in \{1, \ldots, n\}$ such that
$f(x_1, \ldots, x_n) = (x_{i_1}, \ldots, x_{i_m})$. Moreover, $f$ is an embedding if and only if $f$ is injective
if and only if $\{i_1, \ldots, i_m\} = \{1, \ldots, n\}$.

\begin{LEM}\label{cerp.lem.mor}
  Let $\calA$ be a primal algebra, let $<$ be a linear order on~$A$ and let $\sqsubset$ be the induced antilexicographic order.
  Take any $n, m \in \NN$, any permutation $\pi$ of $\{1, \ldots, n\}$ and any permutation $\sigma$ of $\{1, \ldots, m\}$.
  The mapping $f : A^n \to A^m$ is a homomorphism from
  $\calA^n_{\Boxed{\sqsubseteq_\pi}}$ to $\calA^m_{\Boxed{\sqsubseteq_\sigma}}$
  if and only if there exist $i_1, \ldots, i_m \in \{1, \ldots, n\}$ such that $f(x_1, \ldots, x_n) = (x_{i_1}, \ldots, x_{i_m})$
  and the numbers $j_s = \pi^{-1}(i_{\sigma(s)})$, $s \in \{1, \ldots, m\}$, have the following properties:
  \begin{enumerate}
  \item
    $j_m = n$, and
  \item
    for all $s < m$, if $j_s = k < n$ then $\{k+1, \ldots, n\} \subseteq \{j_{s+1}, \ldots, j_{m}\}$.
  \end{enumerate}
\end{LEM}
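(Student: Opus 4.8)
The plan is to use the cited description of the homomorphisms between powers of a primal algebra to reduce the statement to a purely combinatorial fact about antilexicographic orders. Recall from the structure theory quoted above that a map $f : A^n \to A^m$ is a homomorphism of algebras $\calA^n \to \calA^m$ if and only if $f(x_1,\ldots,x_n) = (x_{i_1},\ldots,x_{i_m})$ for some $i_1,\ldots,i_m \in \{1,\ldots,n\}$. Since the expansions $\calA^n_{\sqsubseteq_\pi}$, $\calA^m_{\sqsubseteq_\sigma}$ add only the order relation, such an $f$ is a homomorphism $\calA^n_{\sqsubseteq_\pi} \to \calA^m_{\sqsubseteq_\sigma}$ exactly when it is in addition order-preserving, i.e.\ $x \sqsubseteq_\pi y$ implies $f(x) \sqsubseteq_\sigma f(y)$. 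So it is enough to single out, among the projection maps, the order-preserving ones, and to verify that the condition obtained is precisely (i)--(ii) for the numbers $j_s = \pi^{-1}(i_{\sigma(s)})$.

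First I would get rid of the permutations. By the very definition of $\sqsubseteq_\pi$ and $\sqsubseteq_\sigma$, the coordinate rearrangements $\rho_\pi(x) = (x_{\pi(1)},\ldots,x_{\pi(n)})$ and $\rho_\sigma(y) = (y_{\sigma(1)},\ldots,y_{\sigma(m)})$ are order-isomorphisms from $(A^n,\sqsubseteq_\pi)$ and $(A^m,\sqsubseteq_\sigma)$ onto the powers carrying the plain antilexicographic order~$\sqsubseteq$. A short bookkeeping computation (using $\pi(j_s) = i_{\sigma(s)}$) shows that $\rho_\sigma \circ f \circ \rho_\pi^{-1}$ is the projection map $\phi : A^n \to A^m$ given by $\phi(a_1,\ldots,a_n) = (a_{j_1},\ldots,a_{j_m})$. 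Hence $f$ is order-preserving for $\sqsubseteq_\pi,\sqsubseteq_\sigma$ if and only if $\phi$ is order-preserving for the plain antilexicographic order on both sides, and the lemma reduces to the claim: $\phi$ is order-preserving iff $(j_1,\ldots,j_m)$ satisfies (i)--(ii).

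For the core claim I would introduce, for each coordinate $t \in \{1,\ldots,n\}$, the largest output position reading it, $\mu(t) = \max\{ s : j_s = t \}$, with $\mu(t) = 0$ when $t \notin \{j_1,\ldots,j_m\}$. Sufficiency: given $a \sqsubset b$, let $t_0$ be the largest coordinate with $a_{t_0} \ne b_{t_0}$, so $a_{t_0} < b_{t_0}$ and $a,b$ agree in coordinates above $t_0$; if $\phi(a) \ne \phi(b)$, let $s_0$ be the largest output position where they differ, note $j_{s_0} \le t_0$, and use (ii) (with $k = j_{s_0}$) to rule out $j_{s_0} < t_0$---in that case $t_0 \in \{j_{s_0}+1,\ldots,n\} \subseteq \{j_{s_0+1},\ldots,j_m\}$, producing a later differing position and contradicting the choice of $s_0$---so $j_{s_0} = t_0$, $\phi(a)_{s_0} = a_{t_0} < b_{t_0} = \phi(b)_{s_0}$, and $\phi(a) \sqsubset \phi(b)$. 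Necessity I would prove by contraposition with explicit two-valued witnesses built from any two elements $0 < 1$ of $A$ (which exist, a primal algebra having at least two elements): if $j_m = k < n$, take $a_k = 1$, $b_k = 0$, $a_n = 0$, $b_n = 1$ and all remaining coordinates $0$, so that $a \sqsubset b$ but $\phi(a)_m = 1 > 0 = \phi(b)_m$ at the most significant output position, whence $\phi(a) \sqsupset \phi(b)$; and if (i) holds while (ii) fails, fix $s < m$ with $k := j_s < n$ and some $l \in \{k+1,\ldots,n\} \setminus \{j_{s+1},\ldots,j_m\}$, note that then $\mu(l) < s \le \mu(k)$, and take $a_k = 1$, $b_k = 0$, $a_l = 0$, $b_l = 1$, all other coordinates $0$: then $a \sqsubset b$, while the largest output position at which $\phi(a)$ and $\phi(b)$ differ is $\mu(k)$, which reads coordinate $k$, so $\phi(a)_{\mu(k)} = 1 > 0 = \phi(b)_{\mu(k)}$ and $\phi(a) \sqsupset \phi(b)$.

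I expect the core claim to be the only real work, and within it the necessity direction. The delicate point is that the output coordinate deciding the antilexicographic comparison of $\phi(a)$ and $\phi(b)$ is in general neither position $m$ nor the one reading the highest perturbed input coordinate; the witness has to be arranged so that the \emph{largest} output position touched by the perturbation reads the ``wrong'' coordinate, which is exactly what $\mu(l) < \mu(k)$ secures. The two reductions and the sufficiency direction are then routine.
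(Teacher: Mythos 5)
Your proof is correct and follows essentially the same route as the paper's: reduce via the structure theorem for homomorphisms between powers of a primal algebra to a question about order-preservation of projection maps, prove necessity with two-coordinate $\{0,1\}$-style witnesses (one pair for $j_m=n$, one for condition (ii)), and prove sufficiency by locating the largest differing input coordinate and using (ii) to identify the output position that decides the antilexicographic comparison. Your explicit conjugation by the coordinate rearrangements $\rho_\pi,\rho_\sigma$ and the bookkeeping function $\mu$ are presentational refinements of what the paper does in place (and in fact make the choice of the deciding output position cleaner than the paper's phrasing), not a different argument.
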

\begin{proof}
  Note, first, that (ii) is equivalent to the following requirement: if $j_s = k$ is the last appearance of $k$ in the sequence
  $(j_1, j_2, \ldots, j_m)$ then $\{j_{s+1}, \ldots, j_{m}\} = \{k+1, \ldots, n\}$.
  Note, also, that $\{j_1, \ldots, j_m\} = \{d, \ldots, n\}$ where $d = \min\{j_1, \ldots, j_m\}$.

  $(\Rightarrow)$
  Since $f$ is a homomorphism from $\calA^n_{\Boxed{\sqsubseteq_\pi}}$ to $\calA^m_{\Boxed{\sqsubseteq_\sigma}}$
  we know that there exist $i_1, \ldots, i_m \in \{1, \ldots, n\}$ such that $f(x_1, \ldots, x_n) = (x_{i_1}, \ldots, x_{i_m})$,
  and that
  $
    (x_{\pi(1)}, \ldots, x_{\pi(n)}) \sqsubseteq (y_{\pi(1)}, \ldots, y_{\pi(n)})
  $
  implies
  $
    (x_{i_{\sigma(1)}}, \ldots, x_{i_{\sigma(m)}}) \sqsubseteq (y_{i_{\sigma(1)}}, \ldots, y_{i_{\sigma(m)}})
  $. Let $j_s = \pi^{-1}(i_{\sigma(s)})$, $s \in \{1, \ldots, m\}$, so that $i_{\sigma(s)} = \pi(j_s)$ for all~$s$.

  Let us show that $j_m = n$, that is, $i_{\sigma(m)} = \pi(n)$. Suppose this is not the case and let
  $i_{\sigma(m)} = \pi(k)$ for some $k < n$. Take any $a, b \in A$ so that $a < b$ and consider the $n$-tuples
  \begin{align*}
    \overline x = (x_{\pi(1)}, \ldots, x_{\pi(k)}, \ldots, x_{\pi(n)}) &= (a, a, \ldots, a, \place{k}{th}{b}, a, \ldots, a),\\
    \overline y = (y_{\pi(1)}, \ldots, y_{\pi(k)}, \ldots, y_{\pi(n)}) &= (a, a, \ldots, a, \place{k}{th}{a}, a, \ldots, b).
  \end{align*}
  Then
  $
    \overline x \sqsubset \overline y
  $
  but
  $
    (x_{i_{\sigma(1)}}, \ldots, x_{i_{\sigma(m)}}) \sqsupset (y_{i_{\sigma(1)}}, \ldots, y_{i_{\sigma(m)}})
  $ as $x_{i_{\sigma(m)}} = x_{\pi(k)} = b > a = y_{\pi(k)} = y_{i_{\sigma(m)}}$. Contradiction.

  Let us now show that (ii) holds for the sequence $(j_1, \ldots, j_m)$. Suppose, to the contrary, that there is an
  $s < m$ such that $j_s = k < n$ but $\{k+1, \ldots, n\} \not\subseteq \{j_{s+1}, \ldots, j_{m}\}$.
  Take the largest $l \in \{k+1, \ldots, n\} \setminus \{j_{s+1}, \ldots, j_{m}\}$. Note that $l \le n-1$ as $j_m = n$.
  Take any $a, b \in A$ so that $a < b$ and consider the $n$-tuples
  \begin{align*}
    \overline x = (x_{\pi(1)}, \ldots, x_{\pi(k)}, \ldots, x_{\pi(n)}) &= (a, \ldots, a, \place{k}{th}{b}, a, \ldots, a, \place{l}{th}{a}, a, \ldots, a),\\
    \overline y = (y_{\pi(1)}, \ldots, y_{\pi(k)}, \ldots, y_{\pi(n)}) &= (a, \ldots, a, \place{k}{th}{a}, a, \ldots, a, \place{l}{th}{b}, a, \ldots, a).
  \end{align*}
  Then
  $
    \overline x \sqsubset \overline y
  $
  but, having in mind that $i_{\sigma(s)} = \pi(j_s) = \pi(k)$,
  \begin{align*}
    (x_{i_{\sigma(1)}}, \ldots, x_{i_{\sigma(s)}}, \ldots, x_{i_{\sigma(m)}})
    & = (\ldots, \place{s}{th}{b}, \underbrace{a, \ldots, a, a, \ldots, a}_{\text{no index equals $\pi(l)$}})\\
    &\sqsupset (\ldots, \place{s}{th}{a}, \underbrace{a, \ldots, a, a, \ldots, a}_{\text{no index equals $\pi(l)$}})\\
    &= (y_{i_{\sigma(1)}}, \ldots, y_{i_{\sigma(s)}}, \ldots, y_{i_{\sigma(m)}}).
  \end{align*}
  Contradiction.

  $(\Leftarrow)$
  Let $f : A^n \to A^m$ be a mapping such that $f(x_1, \ldots, x_n) = (x_{i_1}, \ldots, x_{i_m})$ for some
  $i_1, \ldots, i_m \in \{1, \ldots, n\}$ and assume that the numbers
  $j_s = \pi^{-1}(i_{\sigma(s)})$, $s \in \{1, \ldots, m\}$, satisfy (i) and (ii). Then $f$ is clearly a homomorphism
  from $\calA^n$ to $\calA^m$, so let us show that $f$ is monotonous. Take $x_1, \ldots, x_n, y_1, \ldots, y_n \in A$
  such that $(x_{\pi(1)}, \ldots, x_{\pi(k)}, \ldots, x_{\pi(n)})  \sqsubset (y_{\pi(1)}, \ldots, y_{\pi(k)}, \ldots, y_{\pi(n)})$.
  Then there is a $t$ such that $x_{\pi(q)} = y_{\pi(q)}$ for all $q > t$ and $x_{\pi(t)} < y_{\pi(t)}$.
  Let us show that
  \begin{equation}\label{eq.hom}
    (x_{\pi(j_1)}, \ldots, x_{\pi(j_m)}) \sqsubseteq (y_{\pi(j_1)}, \ldots, y_{\pi(j_m)}).
  \end{equation}
  If $\min\{j_1, \ldots, j_m\} > t$ then equality holds in $\eqref{eq.hom}$.
  Suppose, therefore, that $\min\{j_1, \ldots, j_m\} \le t$.
  Then $t \in \{j_1, \ldots, j_m\}$ because of~(ii). Let $j_s$ be the
  last appearance of $t$ in the sequence $(j_1, \ldots, j_m)$. Then $\{j_{s+1}, \ldots, j_{m}\} =
  \{t+1, \ldots, n\}$, whence follows that strict inequality holds in~$\eqref{eq.hom}$.

  Therefore, $\eqref{eq.hom}$ holds. The choice of the indices $j_s$ ensures that $\eqref{eq.hom}$ is equivalent to
  $(x_{i_{\sigma(1)}}, \ldots, x_{i_{\sigma(m)}}) \sqsubseteq (y_{i_{\sigma(1)}}, \ldots, y_{i_{\sigma(m)}})$.
\end{proof}

  Let $\calA$ be a primal algebra and let $<$ be a linear order on $A$.
  Let $\OV(\calA, \Boxed<)$ be the category whose objects are isomorphic copies of structures
  $\calA^n_{\Boxed{\sqsubseteq_\pi}}$ where $n \in \NN$ and $\pi$ is a permutation of $\{1, \ldots, n\}$,
  and whose morphisms are embeddings.

\begin{THM}\label{cerp.thm.k-oba}
  Let $\calA$ be a primal algebra and let $<$ be a linear order on $A$. Then
  $\OV(\calA, \Boxed<)$ is categorically equivalent to $\OBA$.
\end{THM}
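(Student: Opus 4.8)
The plan is to prove that $\OV(\calA, \Boxed<)$ and $\OBA$ are, up to categorical equivalence, the \emph{same} combinatorial category, by exploiting the fact that the morphism description in Lemma~\ref{cerp.lem.mor} never refers to $\calA$ itself, only to the fact that $|A| \ge 2$. The first step is to observe that the two-element boolean algebra $\Bii$ is primal (the boolean operations are functionally complete on $\{0,1\}$), that the powers $\Bii^n$ exhaust the finite boolean algebras up to isomorphism, and that --- by matching the paper's definition of the antilexicographic order of a boolean algebra relative to an ordering of its atoms against the definition of $\sqsubseteq_\pi$ --- under an isomorphism $\calB \cong \Bii^n$ a natural order on $\calB$ corresponds exactly to $\sqsubseteq_\pi$ on $\{0,1\}^n$, where the permutation $\pi$ encodes the chosen ordering of the atoms. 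Hence the objects of $\OBA$ are precisely the isomorphic copies of the structures $\Bii^n_{\Boxed{\sqsubseteq_\pi}}$, and in fact $\OBA = \OV(\Bii, \Boxed{<_0})$, where $<_0$ denotes the order $0 < 1$ on $\{0,1\}$. It therefore suffices to show $\OV(\calA,\Boxed<) \simeq \OV(\Bii, \Boxed{<_0})$.

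Next I would pass to a skeleton-like full subcategory. Let $\calS_\calA$ be the full subcategory of $\OV(\calA, \Boxed<)$ whose objects are exactly the structures $\calA^n_{\Boxed{\sqsubseteq_\pi}}$ with $n \in \NN$ and $\pi$ a permutation of $\{1, \dots, n\}$, and let $\calS_\Bii$ be the analogous full subcategory of $\OV(\Bii, \Boxed{<_0})$. By the definition of the construction $\OV(-,-)$, the inclusions $\calS_\calA \hookrightarrow \OV(\calA, \Boxed<)$ and $\calS_\Bii \hookrightarrow \OV(\Bii, \Boxed{<_0})$ are full, faithful and isomorphism-dense, hence admit pseudoinverses by the criterion recalled in Section~\ref{cerp.sec.prelim}, so $\calS_\calA \simeq \OV(\calA, \Boxed<)$ and $\calS_\Bii \simeq \OV(\Bii, \Boxed{<_0})$. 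I would also note that an order $\sqsubseteq_\pi$ determines both $n$ (by cardinality) and $\pi$ (recover the most significant coordinate $\pi(n)$, then recurse on a fibre where that coordinate is fixed), so each object of $\calS_\calA$ arises from a unique pair $(n, \pi)$, and likewise for $\calS_\Bii$; this makes the following definition unambiguous.

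Now define $E : \calS_\calA \to \calS_\Bii$ on objects by $E(\calA^n_{\Boxed{\sqsubseteq_\pi}}) = \Bii^n_{\Boxed{\sqsubseteq_\pi}}$, and on a morphism $\calA^n_{\Boxed{\sqsubseteq_\pi}} \to \calA^m_{\Boxed{\sqsubseteq_\sigma}}$ --- which, by Lemma~\ref{cerp.lem.mor} together with the recalled fact that an embedding of powers of a primal algebra is a coordinate map $f(x_1, \dots, x_n) = (x_{i_1}, \dots, x_{i_m})$ with $\{i_1, \dots, i_m\} = \{1, \dots, n\}$, is named by a tuple $(i_1, \dots, i_m)$ --- by returning the morphism $\Bii^n_{\Boxed{\sqsubseteq_\pi}} \to \Bii^m_{\Boxed{\sqsubseteq_\sigma}}$ with the same tuple $(i_1, \dots, i_m)$. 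The conditions a tuple must satisfy to name such a morphism (surjectivity onto $\{1, \dots, n\}$, together with (i) and (ii) of Lemma~\ref{cerp.lem.mor} applied to $j_s = \pi^{-1}(i_{\sigma(s)})$) depend only on $n, m, \pi, \sigma$ and the tuple, not on the underlying primal algebra, so $E$ is well defined and restricts to a bijection on every hom-set; it is functorial because the composite of the coordinate maps named by $(i_1, \dots, i_m)$ and $(k_1, \dots, k_\ell)$ is named by $(i_{k_1}, \dots, i_{k_\ell})$ and the identity by $(1, \dots, n)$, both formulas being independent of the algebra; and it is a bijection on objects. Thus $E$ is an isomorphism of categories, and chaining the equivalences gives $\OV(\calA, \Boxed<) \simeq \calS_\calA \cong \calS_\Bii \simeq \OV(\Bii, \Boxed{<_0}) = \OBA$.

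The only genuinely non-formal point is the first one: verifying that the antilexicographic order of a boolean algebra relative to an ordering of its atoms corresponds, under $\calB \cong \Bii^n$, exactly to $\sqsubseteq_\pi$ on $\{0,1\}^n$, so that $\OBA$ really is $\OV(\Bii, \Boxed{<_0})$. Once that identification is made, the theorem is a formal consequence of Lemma~\ref{cerp.lem.mor} being stated without reference to $\calA$, plus the standard fact that a full isomorphism-dense subcategory inclusion is an equivalence. (This argument is the order-sensitive refinement of the categorical equivalence $\BA \simeq \VV(\calA)$ coming from Hu's theorem that is invoked in Example~\ref{cerp.ex.hu}; passing through Lemma~\ref{cerp.lem.mor} is precisely what makes the compatibility of that equivalence with the natural orders transparent.)
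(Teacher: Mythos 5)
Your proposal is correct and follows essentially the same route as the paper: identify $\OBA$ with $\OV(\Bii,\Boxed{<_0})$, cut down to the full subcategories spanned by the canonical objects $\calA^n_{\Boxed{\sqsubseteq_\pi}}$ and $\Bii^n_{\Boxed{\sqsubseteq_\pi}}$, and use Lemma~\ref{cerp.lem.mor} to see that the coordinate-tuple description of morphisms is independent of the underlying primal algebra, so the obvious functor is an isomorphism of these subcategories. The only cosmetic difference is that the paper calls these subcategories skeletons and invokes ``equivalent iff isomorphic skeletons,'' whereas you compose equivalences coming from the full, faithful, isomorphism-dense inclusions; both are standard and your extra remarks (uniqueness of $(n,\pi)$, explicit functoriality) only make the argument more careful.
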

\begin{proof}
  Every finite boolean algebra together with a natural is clearly isomorphic to
  $\Bii^n_{\Boxed{\sqsubseteq_\pi}}$ where $\Bii$ is the two-element boolean algebra whose base set
  is $2 = \{0, 1\}$,
  $n \in \NN$, and $\pi$ is a permutation of $\{1, \ldots, n\}$ which encodes the initial ordering of the atoms.
  Hence, $\OBA = \OV(\Bii, \Boxed\prec)$, where $\prec$ is the usual ordering $0 \prec 1$ of $\Bii$.

  Let $\BB$ be the full subcategory of $\OBA$ spanned by the countable set of objects $\big\{\Bii^n_{\Boxed{\sqsubseteq_\pi}} : n \in \NN$,
  $\pi$ is a permutation of $\{1, 2,\ldots, n\}\big\}$, and let $\CC$ be the full subcategory of $\OV(\calA, \Boxed<)$ spanned by
  the countable set of objects $\big\{\calA^n_{\Boxed{\sqsubseteq_\pi}} : n \in \NN$, $\pi$ is a permutation of $\{1, 2,\ldots, n\}\big\}$.
  Clearly, $\BB$ and $\CC$ are skeletons of $\OBA$ and $\OV(\calA, \Boxed<)$, respectively, so in order to show that
  $\OBA$ and $\OV(\calA, \Boxed<)$ are equivalent it suffices to show that $\BB$ and $\CC$ are isomorphic.
  But this is easy!
  Let $F : \BB \to \CC$ be a functor such that $F(\Bii^n_{\Boxed{\sqsubseteq_\pi}}) = \calA^n_{\Boxed{\sqsubseteq_\pi}}$
  and which takes a morphism $f : 2^n \to 2^m : (x_1, \ldots, x_n) \mapsto (x_{i_1}, \ldots, x_{i_m})$ to
  $f' : A^n \to A^m : (x_1, \ldots, x_n) \mapsto (x_{i_1}, \ldots, x_{i_m})$. Lemma~\ref{cerp.lem.mor} ensures that $F$ is
  well defined and bijective on morphisms, so $F$ is clearly an isomorphism of $\BB$ and $\CC$. Therefore,
  the categories $\OBA$ and $\OV(\calA, \Boxed<)$ are equivalent.
\end{proof}

\section{\Fraisse\ classes and categorical equivalence}
\label{cerp.sec.fraisse}

For a countable structure $\calM$, the class of all finitely generated substructures of $\calM$
is called the \emph{age} of $\calM$ and we denote it by~$\age(\calM)$. A class $\KK$ of finite
structures is an \emph{age} if there is countable structure $\calM$ such that
$\KK = \age(\calM)$.
It is a well-known result that a class $\KK$ of finite structures is an age if and only if
\begin{itemize}
\item
  $\KK$ is an abstract class (that is, closed for isomorphisms),
\item
  there are at most countably many pairwise nonisomorphic structures in $\KK$,
\item
  $\KK$ has the hereditary property (HP), and
\item
  $\KK$ has the joint embedding property (JEP).
\end{itemize}
An age $\KK$ is a \emph{\Fraisse\ age} (= \Fraisse\ class = amalgamation class) if $\KK$ satisfies the
amalgamation property (AP).

A countable structure $\calM$ is ultrahomogeneous if partial isomorphisms between finite substructures lift to an automorphism of the entire structure.  In other words, for any tuple $\oa$ from $\calM$, the orbit of $\oa$ under $\Aut(\calM)$ is defined by the quantifier-free type of $\oa$.  An $\LW$ formula is a formula built out of basic relations, countable conjunctions/disjunctions and negations (called a \emph{simple} formula in \cite{KPT}.)   In general the orbit of a finite tuple is defined by a quantifier-free $\LW$-formula, using Scott sentences.

For every \Fraisse\ age $\KK$ there is a unique (up to isomorphism) countable ultrahomogeneous structure $\calA$
such that $\KK = \age(\calA)$. We say that $\calA$ is the \emph{\Fraisse\ limit} of $\KK$, denoted $\Flim \KK$.  For further model theoretic background, see \cite{hodges}.

If $\KK$ is a Ramsey class of finite ordered structures which is closed under
isomorphisms and taking substructures, and has the joint embedding property, then $\KK$
is a \Fraisse\ age~\cite{Nesetril}. In that case we say that $\KK$ is a \emph{Ramsey age}.
So, every Ramsey age is a \Fraisse\ age.

\begin{LEM}
  Let $\CC$ and $\DD$ be equivalent categories whose objects are structures and embeddings are morphisms.

  $(a)$ If one of the two categories has (JEP) then so does the other.

  $(b)$ If one of the two categories has (AP) then so does the other.
\end{LEM}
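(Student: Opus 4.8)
The plan is to observe that both (JEP) and (AP), once phrased for a category as in Section~\ref{cerp.sec.prelim}, are purely diagrammatic statements: (JEP) asserts the existence, for any two objects, of a common target admitting morphisms from both, and (AP) asserts the existence, for any span $\calB \xleftarrow{f} \calA \xrightarrow{g} \calC$, of an object $\calD$ and morphisms $f' : \calB \to \calD$, $g' : \calC \to \calD$ with $f' \cdot f = g' \cdot g$. Neither condition refers to anything beyond objects, morphisms and composition, so each is invariant under categorical equivalence; the hypothesis that the morphisms are embeddings is not used in the transfer itself — it only guarantees that these categorical formulations agree with the usual model-theoretic ones. Throughout, fix functors $E : \CC \to \DD$, $H : \DD \to \CC$ and natural isomorphisms $\eta : \ID_\CC \to HE$, $\epsilon : \ID_\DD \to EH$ realising the equivalence, and assume $\DD$ has the relevant property; the reverse implication is obtained by the same argument with the roles of $(E,\eta)$ and $(H,\epsilon)$ interchanged.

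For $(a)$, given $\calA, \calB \in \Ob(\CC)$, apply $E$ to obtain $E(\calA), E(\calB) \in \Ob(\DD)$. By (JEP) in $\DD$ there is a $\calD \in \Ob(\DD)$ with morphisms $p : E(\calA) \to \calD$ and $q : E(\calB) \to \calD$. Applying $H$ and composing with $\eta_\calA : \calA \to HE(\calA)$ and $\eta_\calB : \calB \to HE(\calB)$ yields $H(p) \cdot \eta_\calA : \calA \to H(\calD)$ and $H(q) \cdot \eta_\calB : \calB \to H(\calD)$, so $\calA \to H(\calD)$ and $\calB \to H(\calD)$, and $\CC$ has (JEP). (Only the naturality of $\eta$, not its invertibility, is needed here.)

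For $(b)$, given morphisms $f : \calA \to \calB$ and $g : \calA \to \calC$ in $\CC$, apply $E$ to get $E(f) : E(\calA) \to E(\calB)$ and $E(g) : E(\calA) \to E(\calC)$. By (AP) in $\DD$ there are $\calD \in \Ob(\DD)$ and $p : E(\calB) \to \calD$, $q : E(\calC) \to \calD$ with $p \cdot E(f) = q \cdot E(g)$. Applying $H$ gives $H(p) \cdot HE(f) = H(q) \cdot HE(g)$, and multiplying this equation on the right by $\eta_\calA$ and invoking naturality of $\eta$ — which gives $HE(f) \cdot \eta_\calA = \eta_\calB \cdot f$ and $HE(g) \cdot \eta_\calA = \eta_\calC \cdot g$ — we obtain
$$
  \bigl(H(p) \cdot \eta_\calB\bigr) \cdot f = \bigl(H(q) \cdot \eta_\calC\bigr) \cdot g .
$$
Setting $f' = H(p) \cdot \eta_\calB : \calB \to H(\calD)$ and $g' = H(q) \cdot \eta_\calC : \calC \to H(\calD)$ exhibits the required amalgam in $\CC$, so $\CC$ has (AP). The whole argument is a short diagram chase with no real obstacle; the one step that rewards a little care is turning the commuting square produced by $H$, which lives ``over'' the endofunctor $HE$, into a commuting square over $\ID_\CC$ by means of the naturality squares of $\eta$ — exactly the manoeuvre already used in the proof of Theorem~\ref{cerp.thm.adj}.
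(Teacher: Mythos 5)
Your proof is correct and follows essentially the same route as the paper: apply $E$, amalgamate (or jointly embed) in $\DD$, apply $H$, and use the naturality squares of $\eta$ to pull the resulting cocone back from $HE$ to $\ID_\CC$; your morphisms $H(p)\cdot\eta_\calB$ and $H(q)\cdot\eta_\calC$ are exactly the composites in the paper's commuting diagram. The only differences are cosmetic — you spell out part (a) and note that invertibility of $\eta$ is not needed, whereas the paper proves (b) and declares (a) similar.
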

\begin{proof}
  Let us show $(b)$ (the proof of $(a)$ is similar).

  Let $E : \CC \to \DD$ and $H : \DD \to \CC$ be functors that constitute the equivalence between $\CC$ and $\DD$
  and let $\eta : \ID_\CC \to HE$ and $\epsilon : \ID_\DD \to EH$ be the accompanying natural isomorphisms.
  Assume that $\DD$ has (AP) and let $f : \calA \hookrightarrow \calB$ and $g : \calA \hookrightarrow \calC$ be two
  embeddings in $\CC$. Then $E(f) : E(\calA) \hookrightarrow E(\calB)$ and $E(g) : E(\calA) \hookrightarrow E(\calC)$ are
  embeddings in $\DD$, so there is a $\calD \in \Ob(\DD)$ and embeddings $u : E(\calB) \hookrightarrow \calD$ and
  $v : E(\calC) \hookrightarrow \calD$ such that $u \circ E(f) = v \circ E(g)$. Then the diagram
  $$
    \XYMATRIX@M=8pt{
      & H(\calD) \arembfrom[dr]^-{H(u)} & \\
      HE(\calC) \aremb[ur]^-{H(v)} \arembfrom[r]_-{HE(g)} & HE(\calA) \aremb[r]_-{HE(f)} & HE(\calB) \\
      \calC \ar[u]^-{\eta_\calC} \arembfrom[r]^-g & \calA \ar[u]^-{\eta_\calA} \aremb[r]^-{f} & \calB \ar[u]_-{\eta_\calB}
    }
  $$
  commutes because $H$ is a functor and because $\eta$ is natural.
\end{proof}

\begin{COR}\label{cerp.cor.rce}
  Let $\CC$ and $\DD$ be equivalent categories with finite structures as objects and embeddings as morphisms.
  If $\CC$ is a Ramsey age and $\DD$ has (HP) then $\DD$ is also a Ramsey age.
\end{COR}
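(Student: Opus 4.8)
The plan is to verify, one by one, the defining conditions of a Ramsey age for $\DD$: that it is an abstract class, has at most countably many isomorphism types, has (HP), (JEP), (AP), and the Ramsey property for morphisms (which for ordered structures is \emph{the} Ramsey property). Several of these come essentially for free. Since $\CC$ is a Ramsey age, it is in particular a \Fraisse\ age, so it has (JEP) and (AP); by the preceding lemma these transfer to $\DD$. It is also hypothesized that $\DD$ has (HP). The class $\DD$ is abstract by assumption (it is a category of finite structures and embeddings, and such categories are understood to be closed under isomorphism). For countability of the number of isomorphism types: $\DD$ is equivalent to $\CC$, equivalent categories have isomorphic skeletons, and $\CC$ has a countable skeleton since it is an age; hence $\DD$ has a countable skeleton, i.e.\ at most countably many pairwise nonisomorphic objects.

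Next I would handle the Ramsey property. Since $\CC$ has the Ramsey property for morphisms and $\CC$, $\DD$ are equivalent categories, Theorem~\ref{cerp.thm.dual} gives that $\DD$ has the Ramsey property for morphisms as well. (Equivalently, one could invoke Theorem~\ref{cerp.thm.adj}$(a)$ applied to the adjunction given by the equivalence.) Because the objects of $\DD$ are finite ordered structures — here I should note that $\DD$ inherits from $\CC$, via the equivalence, the feature that all its objects are rigid, or more directly that for ordered structures the relations $\sim_\calA$ are trivial — the Ramsey property for morphisms coincides with the Ramsey property in the plain sense.

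At this point $\DD$ satisfies (HP), (JEP), and the Ramsey property, so by the result quoted from \cite{Nesetril} just before the statement (``every Ramsey class of finite ordered structures closed under isomorphisms and substructures and having (JEP) is a \Fraisse\ age''), $\DD$ is a \Fraisse\ age, and being Ramsey it is a Ramsey age. Strictly, invoking that quoted result already subsumes the explicit verification of (AP) and countability, so the argument can be streamlined: establish (HP) (given), (JEP) (transferred), and the Ramsey property (Theorem~\ref{cerp.thm.dual}), then cite \cite{Nesetril}.

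I do not anticipate a serious obstacle; the proof is a matter of bookkeeping across the already-proved transfer results. The one point that deserves care is making sure the notion of ``Ramsey property'' matches on both sides: $\CC$ being a Ramsey age refers to the Ramsey property for ordered structures, which is the Ramsey property for morphisms, and Theorem~\ref{cerp.thm.dual} is stated precisely for the Ramsey property for morphisms, so the notions line up. A secondary subtlety is that the quoted \cite{Nesetril} result requires closure under substructures; this is exactly what (HP) provides for $\DD$, so the hypothesis of the corollary is used in precisely the right place.
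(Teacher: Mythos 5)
Your proposal is correct and follows exactly the route the paper takes: the paper's entire proof is ``Follows from the above lemma and Theorem~\ref{cerp.thm.dual}'', i.e.\ (JEP)/(AP) transfer by the preceding lemma, the Ramsey property transfers by the equivalence theorem, and (HP) is assumed. Your version merely spells out the bookkeeping (closure under isomorphism, countability, the match between the Ramsey property for morphisms and the plain Ramsey property), and your remarks on where each hypothesis is used are accurate.
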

\begin{proof}
  Follows from the above lemma and Theorem~\ref{cerp.thm.dual}.
\end{proof}

\begin{LEM}
  Let $\calA$ be a primal algebra and let $<$ be a linear order on $A$. Then
  $\OV(\calA, \Boxed<)$ has (HP).
\end{LEM}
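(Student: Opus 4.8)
The plan is to lean on the structure theory of primal algebras — the same results already invoked just before Lemma~\ref{cerp.lem.mor}, see \cite{burris-sankappanavar,jezek} — to reduce the statement to a bookkeeping fact about antilexicographic orders. Recall that (HP) for $\OV(\calA,\Boxed<)$ means that whenever $\calB$ is an object and $\calS \le \calB$ is a substructure, $\calS$ is again (isomorphic to) an object. Since $\Ob(\OV(\calA,\Boxed<))$ is closed under isomorphism by definition, and every object is isomorphic to some $\calA^n_{\Boxed{\sqsubseteq_\pi}}$, I may assume $\calB = \calA^n_{\Boxed{\sqsubseteq_\pi}}$; a substructure then consists of a subalgebra $S$ of the unordered algebra $\calA^n$ equipped with the order $\sqsubseteq_\pi$ restricted to $S$, and it suffices to produce $m \in \NN$ and a permutation $\sigma$ of $\{1,\dots,m\}$ with $\calS \cong \calA^m_{\Boxed{\sqsubseteq_\sigma}}$ as ordered structures.

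First I would pin down the algebra $S$. As a subalgebra of a finite power of the primal algebra $\calA$, $S$ belongs to the variety generated by $\calA$, and by the structure theory of primal algebras every finite member of that variety is isomorphic to a finite power of $\calA$ (concretely, $S$ is one of the diagonal subalgebras $\{x \in A^n : x_i = x_j$ whenever $i \sim j\}$ for an equivalence relation $\sim$ on $\{1,\dots,n\}$, with $m$ the number of $\sim$-classes; necessarily $m \ge 1$, since a subalgebra of $\calA^n$ contains the constant tuples). Fix an algebra isomorphism $g : \calA^m \to S$; composing it with the inclusion $S \hookrightarrow \calA^n$ gives an algebra embedding $g : \calA^m \hookrightarrow \calA^n$, so by the description of morphisms recalled before Lemma~\ref{cerp.lem.mor} there are $i_1,\dots,i_n \in \{1,\dots,m\}$ with $\{i_1,\dots,i_n\} = \{1,\dots,m\}$ and $g(x_1,\dots,x_m) = (x_{i_1},\dots,x_{i_n})$.

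It then remains to compute the order that $g$ pulls back. Define $\preceq$ on $A^m$ by $u \preceq v \iff g(u) \sqsubseteq_\pi g(v)$; since $g$ is already an algebra isomorphism onto $S$, once I show $\preceq \,=\, \sqsubseteq_\sigma$ for a suitable $\sigma$ the map $g : \calA^m_{\Boxed{\sqsubseteq_\sigma}} \to \calS$ is an isomorphism of ordered structures and the lemma follows. Using $g(u)_{\pi(s)} = u_{i_{\pi(s)}}$ and unwinding the definition of $\sqsubseteq_\pi$ one gets $u \preceq v \iff (u_{i_{\pi(1)}},\dots,u_{i_{\pi(n)}}) \sqsubseteq (v_{i_{\pi(1)}},\dots,v_{i_{\pi(n)}})$. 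For $c \in \{1,\dots,m\}$ let $\lambda(c)$ be the largest $s$ with $i_{\pi(s)} = c$, i.e.\ the position of the last occurrence of $c$ in the sequence $i_{\pi(1)},\dots,i_{\pi(n)}$; surjectivity of that sequence makes $\lambda$ a well-defined injection, and I let $\sigma$ be the permutation of $\{1,\dots,m\}$ with $\lambda(\sigma(1)) < \dots < \lambda(\sigma(m))$. A scan-from-the-right argument then finishes it: for $u \ne v$, letting $c^\ast$ be the element of $\{c : u_c \ne v_c\}$ with the largest value of $\lambda$, both $u \preceq v$ and $u \sqsubseteq_\sigma v$ turn out to be equivalent to $u_{c^\ast} < v_{c^\ast}$, so $\preceq \,=\, \sqsubseteq_\sigma$.

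The only non-elementary ingredient is the structure theory used to identify $S$, but that is already part of the toolkit invoked in the excerpt, so I expect no real obstacle there. The genuine care is needed in the last paragraph: $\pi$ may enumerate the coordinates of a single $\sim$-class non-consecutively, so it is not the first but the \emph{last} occurrence of a class in the $\pi$-enumeration that controls the restricted order — this is exactly what $\lambda$ isolates — and one must check that the resulting $\sigma$ makes the two orders agree on every pair.
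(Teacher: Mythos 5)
Your proof is correct and follows essentially the same route as the paper's: the key construction in both is the permutation obtained by ordering the coordinates of the subpower according to the \emph{last} occurrence of each index in the $\pi$-rearranged coordinate-selection sequence (your $\lambda(c)$ is exactly the paper's $\max A_s$). The only differences are cosmetic: you spell out explicitly that a substructure's underlying algebra is a power of $\calA$, and you verify the order identity directly by the scan-from-the-right computation, whereas the paper checks conditions (i) and (ii) of Lemma~\ref{cerp.lem.mor} and lets that lemma do the final step.
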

\begin{proof}
  Take any $\calA^m_{\sqsubseteq_\sigma}$ and any embedding $f : \calA^n_\preccurlyeq \hookrightarrow \calA^m_{\sqsubseteq_\sigma}$.
  Let $i_1, \ldots, i_m$ be indices such that $f(x_1, \ldots, x_n) = (x_{i_1}, \ldots, x_{i_m})$ and $\{i_1, \ldots, i_m\} = \{1, \ldots, n\}$.
  Therefore, $(x_1, \ldots, x_n) \preccurlyeq (y_1, \ldots, y_n)$ if and only if
  $(x_{i_1}, \ldots, x_{i_m}) \sqsubseteq_\sigma (y_{i_1}, \ldots, y_{i_m})$.

  Let us show that there exists a permutation $\pi$ of $\{1, \ldots, n\}$ such that $\Boxed\preccurlyeq = \Boxed{\sqsubseteq_\pi}$.
  For an arbitrary $s \in \{1, \ldots, n\}$ let $A_s = \{ t \in \{1, \ldots, m\} : i_{\sigma(t)} = s \}$. Note that
  $\{A_1, \ldots, A_n\}$ is a partition of $\{1, \ldots, m\}$ because $\{i_1, \ldots, i_m\} = \{1, \ldots, n\}$.
  Let $\pi$ be a permutation of $\{1, \ldots, n\}$ such that $\max A_{\pi(1)} < \max A_{\pi(2)} < \ldots <  \max A_{\pi(n)}$
  (note that $m \in A_{\pi(n)}$), and define $(j_1, \ldots, j_m)$ as follows: $j_s = k$ if and only if $s \in A_{\pi(k)}$.
  Then it is easy to verify that $\pi(j_s) = i_{\sigma(s)}$ for all $s$ and that $(j_1, \ldots, j_m)$
  satisfies (i) and (ii) of Lemma~\ref{cerp.lem.mor}. So, Lemma~\ref{cerp.lem.mor} ensures that
  $f$ is a homomorphism, and hence an embedding, of $\calA^n_{\sqsubseteq_\pi}$ into $\calA^m_{\sqsubseteq_\sigma}$.

  Let us show that $\Boxed\preccurlyeq = \Boxed{\sqsubseteq_\pi}$.
  One the one hand, $(x_1, \ldots, x_n) \sqsubseteq_\pi (y_1, \ldots, y_n)$ is equivalent to
  $(x_{i_1}, \ldots, x_{i_m}) \sqsubseteq_\sigma (y_{i_1}, \ldots, y_{i_m})$ because
  $f : \calA^n_{\sqsubseteq_\pi} \hookrightarrow \calA^m_{\sqsubseteq_\sigma}$.
  On the other hand, $(x_1, \ldots, x_n) \preccurlyeq (y_1, \ldots, y_n)$ is equivalent to
  $(x_{i_1}, \ldots, x_{i_m}) \sqsubseteq_\sigma (y_{i_1}, \ldots, y_{i_m})$ because
  $f : \calA^n_\preccurlyeq \hookrightarrow \calA^m_{\sqsubseteq_\sigma}$. Therefore,
  $\Boxed\preccurlyeq = \Boxed{\sqsubseteq_\pi}$.
\end{proof}

\begin{THM}\label{cerp.thm.1}
  Let $\calA$ be a primal algebra and let $<$ be a linear order on $A$. Then
  $\OV(\calA, \Boxed<)$ is a Ramsey age.
\end{THM}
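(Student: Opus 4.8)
The plan is to deduce the statement from the categorical equivalence $\OV(\calA,\Boxed<)\simeq\OBA$ established in Theorem~\ref{cerp.thm.k-oba}, together with Corollary~\ref{cerp.cor.rce}. Both $\OBA$ and $\OV(\calA,\Boxed<)$ have finite structures as objects and embeddings as morphisms, and the lemma above shows that $\OV(\calA,\Boxed<)$ has (HP); so, once we know that $\OBA$ is a Ramsey age, Corollary~\ref{cerp.cor.rce} applied with $\CC=\OBA$ and $\DD=\OV(\calA,\Boxed<)$ gives exactly the conclusion. Thus everything reduces to a single point: that $\OBA$ itself is a Ramsey age.

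To see this I would check the defining conditions of a Ramsey age for $\OBA$ one by one. It is closed under isomorphisms by definition. It is closed under substructures because $\OBA=\OV(\Bii,\Boxed\prec)$ (as recorded in the proof of Theorem~\ref{cerp.thm.k-oba}), the two-element boolean algebra $\Bii$ is primal, and hence the preceding lemma applies with $\calA=\Bii$. It has (JEP): given two finite boolean algebras with natural orders, listing the atoms of the first and then those of the second yields a linear order on the atoms of a common boolean extension whose associated antilexicographic order restricts to the two given orders. Finally, $\OBA$ has the Ramsey property. This is the classical fact underlying the Finite Dual Ramsey Theorem (Theorem~\ref{cerp.thm.FDRT}) in its ordered form: under finite Stone duality a naturally ordered finite boolean algebra is coded by its linearly ordered set of atoms, an order-preserving embedding corresponds to a rigid surjection of the atom sets, and the elements of $\binom{\calC}{\calA}$ in $\OBA$ correspond bijectively to the ordered partitions of the atom set of $\calC$ into $|\mathrm{At}(\calA)|$ blocks; with this dictionary the Ramsey property for $\OBA$ is precisely the ordered version of Theorem~\ref{cerp.thm.FDRT} (see also the treatment of naturally ordered boolean algebras in \cite{KPT}). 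Since every object of $\OBA$ is rigid, the Ramsey properties for objects and for morphisms coincide here. By the result of \cite{Nesetril} quoted above, a Ramsey class closed under isomorphisms and substructures and having (JEP) is a \Fraisse\ age; hence $\OBA$ is a Ramsey age.

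Putting the pieces together, $\OBA$ is a Ramsey age, $\OV(\calA,\Boxed<)$ has (HP) and is equivalent to $\OBA$, so Corollary~\ref{cerp.cor.rce} yields that $\OV(\calA,\Boxed<)$ is a Ramsey age. The substantive work — the equivalence of categories, the hereditary property for $\OV(\calA,\Boxed<)$, and the transfer of the Ramsey property, (HP) and (AP)/(JEP) along equivalences — has already been done in the preceding lemmas and in Section~\ref{cerp.sec.RPA}, so the present proof is essentially an assembly. The only external ingredient is the Ramsey property for $\OBA$; if one prefers not to invoke the ordered dual Ramsey theorem as a black box, the single delicate point is the identification of $\binom{\calC}{\calA}$ in $\OBA$ with the ordered partitions of the atoms of $\calC$ of the appropriate shape, which is exactly the set to which Theorem~\ref{cerp.thm.FDRT} is applied.
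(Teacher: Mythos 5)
Your proof takes essentially the same route as the paper: the categorical equivalence $\OV(\calA,\Boxed<)\simeq\OBA$ from Theorem~\ref{cerp.thm.k-oba}, the preceding lemma giving (HP) for $\OV(\calA,\Boxed<)$, and Corollary~\ref{cerp.cor.rce}. The only difference is that you expand the one external ingredient --- that $\OBA$ is a Ramsey age --- into a sketch via the ordered dual Ramsey theorem and Stone duality, whereas the paper simply cites \cite{KPT} for this fact; your sketch is consistent with the standard argument.
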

\begin{proof}
  Since $\OV(\calA, \Boxed<)$ is categorically equivalent to $\OBA$ (Theorem~\ref{cerp.thm.k-oba}),
  $\OBA$ is a Ramsey age \cite{KPT} and $\OV(\calA, \Boxed<)$ has (HP) by above lemma,
  Corollary~\ref{cerp.cor.rce} yields that $\OV(\calA, \Boxed<)$ is a Ramsey age.
\end{proof}

\section{\Fraisse\ limits with identical automorphism group}
\label{cerp.sec.Lynn}

For a closed subgroup $G < \Sym(\mathbb{N})$ we can construct an ultrahomogeneous structure as follows.  Let $\Delta_{h(G)} = \{R_i \mid i \in I\}$ where $R_i$ is an $n$-ary relation corresponding to the orbit $\calO_i$ of an $n$-tuple $\oa \in \mathbb{N}^n$ under $G$.   Define $\calM = (\mathbb{N}, \Delta_{h(G)})$ so that
$$\calM \vDash R_i(\oa) \Leftrightarrow \oa \in \calO_i$$
Clearly partial isomorphisms of $\calM$ extend to automorphisms of $\calM$ and so $\calM$ is ultrahomogeneous.  Moreover, $\Aut(\calM) = G$.  We will call $\Delta_{h(G)}$ the \emph{Hodges language corresponding to $G$} ($(\mathbb{N}, \Delta_{h(G)})$ is called the ``induced structure associated to $G$'' in \cite{KPT}).
In the case that $G = \Aut(\calA)$ for some given countable structure $\calA$, we call $\Delta_{h(G)}$ the \emph{Hodges language on $\calA$}.

In \cite{KPT} the authors show the following.

\begin{THM} \cite[Theorem 4.7]{KPT}\label{kpt4.7}
  Let $G$ be a closed subgroup of $\Sym(F)$ for a countable set $F$. Then $G$
  is extremely amenable if and only if $G = \Aut(\calF)$ for a countable homogeneous structure $\calF$
  whose age has the Ramsey property and consists of rigid elements.
\end{THM}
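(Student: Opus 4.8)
The plan is to prove this Kechris--Pestov--Todor\v cevi\'c correspondence by establishing the two implications separately, in each case passing between the combinatorial Ramsey condition and extreme amenability by means of the standard criterion (see~\cite{KPT}) that a closed subgroup $G \le \Sym(F)$ is extremely amenable if and only if every continuous action of $G$ on a nonempty compact Hausdorff space has a fixed point --- equivalently, $G$ is finitely oscillation stable in its left uniformity.

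\emph{Direction $(\Leftarrow)$.} Suppose $G = \Aut(\calF)$ with $\calF$ countable homogeneous, $\age(\calF)$ consisting of rigid structures and having the Ramsey property; by Proposition~\ref{cerp.prop.rigid} this amounts to $\age(\calF)$ (with embeddings as morphisms) having the Ramsey property for morphisms, and rigidity identifies $\binom{\calF}{\calA}$ with $\Emb(\calA,\calF)$ and the latter with a coset space $G/G_{\bar a}$, where $\bar a$ enumerates a finite substructure $\calA \le \calF$ and $G_{\bar a}$ is its pointwise stabiliser. First I would lift the Ramsey property from the age to $\calF$: for $\calA \le \calB \le \calF$ finite and $k \ge 2$, a witness $\calC \longrightarrow (\calB)^{\calA}_k$ realised inside $\calF$ makes every $k$-colouring of $\binom{\calF}{\calA}$ monochromatic on $w \cdot \binom{\calB}{\calA}$ for some $w \in G$ (a copy of $\calB$ in $\calF$ extends to an automorphism by homogeneity). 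Then, given a compact $G$-space $X$, a base point $x_0$, a continuous $\phi : X \to [0,1]$, a finite set $S \subseteq G$ and $\varepsilon > 0$, the orbit function $g \mapsto \phi(g x_0)$ is left-uniformly continuous, hence approximately factors through some $G/G_{\bar a}$; discretising its range into finitely many colours and invoking the lifted Ramsey property with $\calB$ taken to be the substructure of $\calF$ generated by $\bar a$ and its finitely many $S$-translates, I obtain $w \in G$ making $\phi$ nearly constant on $\{w s x_0 : s \in S\}$, with the error controllable below any prescribed $\varepsilon$ by choosing $\bar a$ fine enough. Since homogeneity lets $S$ be an arbitrary finite subset of $G$, this is exactly Pestov's criterion, so $X$ has a fixed point and $G$ is extremely amenable.

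\emph{Direction $(\Rightarrow)$.} Suppose $G$ is extremely amenable. The natural action of $G$ on the nonempty compact space $\LO(F)$ of linear orderings of $F$ has a fixed point, i.e.\ there is a $G$-invariant linear order $<$ on $F$; let $\calF'$ be the structure on $F$ with language $\Delta_{h(G)} \cup \{<\}$. Then $\Aut(\calF') = G$ (as $<$ is $G$-invariant), $\calF'$ is homogeneous (the $G$-orbit of a tuple is already pinned down by the atomic relations of $\Delta_{h(G)}$, so partial isomorphisms extend to automorphisms), and every finite substructure of $\calF'$ carries a linear order, hence is rigid, so $\age(\calF')$ consists of rigid structures. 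It remains to show $\age(\calF')$ has the Ramsey property. Assume not, and fix $\calA \le \calB$ in $\age(\calF')$ with $|\binom{\calB}{\calA}| \ge 2$ (the case $|\binom{\calB}{\calA}| \le 1$ being trivial) and $k$ such that no $\calC \in \age(\calF')$ satisfies $\calC \longrightarrow (\calB)^{\calA}_k$. Running through the finite substructures of $\calF'$ and using compactness of the space of $k$-colourings of $\binom{\calF'}{\calA}$, I produce a single colouring $c$ with no monochromatic copy of $\calB$ anywhere in $\calF'$. The set of such colourings is closed, $G$-invariant, and (since $|\binom{\calB}{\calA}| \ge 2$) contains no constant colouring, so $\overline{G \cdot c}$ is a compact $G$-flow with no constant point; but extreme amenability gives it a $G$-fixed point, which is constant on the $G$-orbits of $\binom{\calF'}{\calA}$ --- a single orbit by homogeneity --- hence constant, a contradiction. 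Therefore $\age(\calF')$ is a Ramsey age and $\calF'$ witnesses the right-hand side.

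\emph{The main obstacle} is the translation carried out in $(\Leftarrow)$: manufacturing a fixed point of an arbitrary compact $G$-flow out of the purely combinatorial Ramsey property of $\age(\calF)$. This requires careful tracking of which finitely many group elements must be absorbed into $\calB$, control of the approximation error when a continuous function on $X$ is replaced by one factoring through a finite coset space, and the rigidity-powered dictionary linking copies of $\calA$ in $\calF$, embeddings $\calA \to \calF$, and cosets $G/G_{\bar a}$, which is what makes the structural and the group-theoretic sides coincide. By comparison, the converse direction is comparatively routine once the $G$-invariant linear order is in hand.
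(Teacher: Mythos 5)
This theorem is quoted in the paper as \cite[Theorem 4.7]{KPT} without any proof, so there is no in-paper argument to compare against; your sketch reproduces the standard Kechris--Pestov--Todor\v cevi\'c proof and is essentially correct. Both directions are sound as outlined: in $(\Leftarrow)$ you correctly combine the rigidity-based identification of $\binom{\calF}{\calA}$ with $\Emb(\calA,\calF) \cong G/G_{\bar a}$, the lifting of the Ramsey property from the age to the limit, and Pestov's finite oscillation stability criterion (which you are entitled to cite as known), while in $(\Rightarrow)$ the fixed point on $\LO(F)$, the passage to the Hodges language expanded by the invariant order, and the compactness-plus-orbit-closure contradiction are all carried out correctly.
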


This theorem guarantees that for extremely amenable $G\leq\Sym(\mathbb{N})$, $\calM_G = \{\calM \mid \Aut(\calM)=G \textrm{~and $\calM$ is ordered by $<$ and ultrahomogeneous}\}$ gives a family of structures with age having the Ramsey property for morphisms.  See Appendix 2 for an additional discussion involving function symbols.  The canonical relational structure $(\mathbb{N},\Delta_{h(G)})$ is in $\calM_G$, but so are a variety of structures in functional languages whose description might be more natural.  Consider the family of categories
$$\Gamma = \{\CC \mid \CC \textrm{~is the category of structures with $\Ob(\CC) = \age(\calM)$ for $\calM \in \calM_G$} \}$$
We can use our technology of adjunctions to explain in categorical language why all members $\CC \in \Gamma_G$ must share the Ramsey property for morphisms.

\begin{THM}\label{equiv1}
Let $\DD_1, \DD_2$ be two categories of finite structures.  Suppose each class of structures is a \Fraisse\ class and the automorphism groups of the \Fraisse\ limits are isomorphic.  Then there is a third category of finite structures $\CC$ and adjunctions
$$F_1 : \CC \rightleftarrows \DD_1 : G_1$$
$$F_2 : \CC \rightleftarrows \DD_2 : G_2$$
such that the $G_i$ are inclusions and $F_2 \circ G_1$, $F_1 \circ G_2$ preserve the Ramsey property for morphisms.
\end{THM}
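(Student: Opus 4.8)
The plan is to build $\CC$ as a category of structures whose common automorphism group (of the \Fraisse\ limit) is the abstract group $G \cong \Aut(\Flim \KK_1) \cong \Aut(\Flim \KK_2)$, realized canonically. Concretely, let $\calM_i = \Flim(\Ob(\DD_i))$ for $i = 1, 2$; by hypothesis there is a group isomorphism $\theta : \Aut(\calM_1) \to \Aut(\calM_2)$. Transporting the permutation action along $\theta$ and a bijection of the underlying countable sets, we may regard both as giving rise to the same closed permutation group $G \le \Sym(\NN)$. Take $\calM = (\NN, \Delta_{h(G)})$ to be the canonical ultrahomogeneous structure in the Hodges language corresponding to $G$, and let $\CC$ be the category whose objects are (isomorphic copies of) finite substructures of $\calM$, i.e.\ $\Ob(\CC) = \age(\calM)$, with embeddings as morphisms. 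Since $\Delta_{h(G)}$ records exactly the orbits of tuples under $G$, every embedding between objects in $\age(\calM_i)$ is automatically an embedding for the $\Delta_{h(G)}$-structure and vice versa — the two notions of "substructure" agree on underlying sets because orbit-relations are preserved and reflected. This identification is the crux, so I would state and verify it carefully.

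Next I would define the functors. For each $i$, let $G_i : \DD_i \to \CC$ send a structure $\calX \in \Ob(\DD_i)$ to the $\Delta_{h(G)}$-structure on the same underlying set (with the induced orbit relations), acting as the identity on underlying maps; this is well-defined and full and faithful on morphisms precisely by the orbit-agreement observation above, and it is literally an inclusion of underlying sets and maps, so "$G_i$ is an inclusion" as required. For the left adjoint $F_i : \CC \to \DD_i$, send an object $\calY \in \age(\calM) = \age(\calM_i)$ (same abstract class!) to a chosen isomorphic copy living in $\DD_i$, and extend to morphisms using fullness of the \Fraisse\ class: an embedding $\calY \hookrightarrow \calY'$ in $\CC$ is, underlying-set-wise, also an embedding of the corresponding $\DD_i$-structures because both signatures have the same quantifier-free definable relations over $G$ (each basic relation of one is quantifier-free $\LW$-definable in the other, as the orbits coincide). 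One then checks the adjunction: $\hom_{\DD_i}(F_i(\calY), \calX) \cong \hom_\CC(\calY, G_i(\calX))$ naturally — in fact both hom-sets are identified with the same set of underlying embeddings, so the adjunction isomorphism $\Phi$ is essentially the identity and naturality is immediate. (Strictly, $F_i$ is left adjoint to $G_i$; the unit $\eta_\calY : \calY \to G_i F_i(\calY)$ is the chosen isomorphism-witnessing map.)

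With the two adjunctions $F_i : \CC \rightleftarrows \DD_i : G_i$ in hand, the Ramsey transfer is pure formalism via Theorem~\ref{cerp.thm.adj}(a): if $\DD_1$ has the Ramsey property for morphisms then so does $\CC$ (right adjoint $G_1$, wait — $G_1$ is the right adjoint here, and $\DD_1$ is the codomain of the right adjoint's domain... let me be careful: in the notation $F : \CC \rightleftarrows \DD : G$ of Theorem~\ref{cerp.thm.adj}, $G : \DD \to \CC$ is the right adjoint, and (a) says $\DD$ Ramsey $\Rightarrow$ $\CC$ Ramsey). So $\DD_1$ Ramsey for morphisms gives $\CC$ Ramsey for morphisms, and then applying the same theorem to $F_2 : \CC \rightleftarrows \DD_2 : G_2$ gives $\DD_2$ Ramsey for morphisms; the composite $F_2 \circ G_1 : \DD_1 \to \DD_2$ thus carries the Ramsey property for morphisms, and symmetrically $F_1 \circ G_2$. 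Since each $\DD_i$ is a \Fraisse\ class, by Theorem~\ref{kpt4.7} (or directly, since each $\age(\calM_i)$ consists of rigid elements when $G$ is extremely amenable) these are exactly the situations of interest.

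The main obstacle is the bookkeeping in the identification step: one must check that for a finite set $X$ and two first-order structures on $X$ — one in $\DD_i$'s signature, one in $\Delta_{h(G)}$ — having "the same orbits of tuples under the respective limit automorphism groups" really does force the embedding monoids to coincide on the nose, including that isomorphism-density holds so that $G_i$ lands in $\age(\calM)$ and $F_i$ can be defined on all of $\Ob(\CC)$. This uses ultrahomogeneity (orbits of tuples $=$ quantifier-free types, possibly quantifier-free $\LW$-types via Scott sentences, as recalled in Section~\ref{cerp.sec.fraisse}) and the fact that a \Fraisse\ limit is determined by its age together with ultrahomogeneity; once that is pinned down, everything else is diagram-chasing already packaged in Theorem~\ref{cerp.thm.adj}.
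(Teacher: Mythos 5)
Your overall plan --- take $\CC$ to be the category of finite structures in the Hodges language of the common group $G$, let the $G_i$ be inclusions, and transfer the Ramsey property through the two adjunctions --- is the same skeleton as the paper's proof. But there are two genuine gaps, and they are linked.

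First, the identification $\age(\calM) = \age(\calM_i)$ (``same abstract class!'') is false in the situation the theorem is actually designed for, namely when the language of $\DD_i$ contains function symbols. The Hodges language $\Delta_{h(G)}$ is purely relational, so \emph{every} finite subset of $\NN$ carries an object of $\CC$; but an object of $\DD_i$ must be closed under the function symbols of $\calF_i$. Hence $\Ob(\CC)$ is strictly larger than (the image of) $\Ob(\DD_i)$, your $F_i$ (``a chosen isomorphic copy living in $\DD_i$'') is not defined on all of $\Ob(\CC)$, and the hom-sets $\hom_{\DD_i}(F_i(\calY),\calX)$ and $\hom_\CC(\calY,G_i(\calX))$ are not ``the same set of underlying embeddings.'' The paper's $F_i$ sends a finite set $A$ to its \emph{closure} under the function symbols of $\calF_i$; the unit $\eta_A : A \hookrightarrow G_iF_i(A)$ is a genuine (non-invertible) inclusion, and the adjunction is not an equivalence. (The paper's Observation that $G_i\circ F_i \neq \ID$ records exactly this.) If the two categories really coincided as you claim, the theorem would be vacuous.

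Second, your Ramsey transfer is wrong in one direction. Theorem~\ref{cerp.thm.adj}$(a)$ applied to $F_2 : \CC \rightleftarrows \DD_2 : G_2$ yields ``$\DD_2$ Ramsey $\Rightarrow$ $\CC$ Ramsey,'' not the implication ``$\CC$ Ramsey $\Rightarrow$ $\DD_2$ Ramsey'' that you need to conclude that $F_2\circ G_1$ preserves the property. The right-adjoint direction gets you from $\DD_1$ to $\CC$; to descend from $\CC$ to $\DD_2$ the paper invokes Lemma~\ref{cofinal}: $G_2(\DD_2)$ is a full subcategory of $\CC$ (fullness being the orbit/ultrahomogeneity argument you sketch) whose objects are cofinal in $\Ob(\CC)$, because every finite set embeds into its function-closure. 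That cofinality step is exactly what your identification claim was papering over, so fixing the first gap forces you to supply the second.
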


\begin{proof}  Let $\calF_1, \calF_2$ be the \Fraisse\ limits of the classes $\Ob(\DD_1), \Ob(\DD_2)$,
  both with automorphism group $G$.  We may assume that $\calF_1, \calF_2$ have the same underlying set, $\NN$.
  Now let $\Ob(\CC)$ be all finite subsets of $\NN$ and let the morphisms of $\CC$ be embeddings in the
  Hodges language corresponding to $G$.  Now define $F_i : \CC \rightarrow \DD_i$ to take any finite subset
  $A \subset \NN$ and send it to the closure of $A$ under the function symbols in the language of $\calF_i$.
  This map is well-defined and natural by ultrahomogeneity of $\calF_i$.  Thus we have an adjunction
  $F_i : \CC \rightleftarrows \DD_i : G_i$, where $G_i$ is the inclusion functor that ``forgets'' the function symbols.
  By Proposition~\ref{cofinal}, $F_i$ preserves the Ramsey property for morphisms.  By Theorem \ref{cerp.thm.adj}, so does $G_i$.  Thus, so do their composites.
\end{proof}

\begin{OBS} It is interesting to note that in the above Theorem, $F_i \circ G_i = \ID_{\DD_i}$ but $G_i \circ F_i \neq \ID_{\DD_i}$.
\end{OBS}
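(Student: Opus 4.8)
The plan is to unwind the two composites directly from the construction in the proof of Theorem~\ref{equiv1} and to note that the asymmetry between them is exactly the standard phenomenon of a full reflective subcategory: identity counit, generally nontrivial unit. Recall that $G_i : \DD_i \to \CC$ is the full inclusion that forgets the function symbols of $\calF_i$, while $F_i : \CC \to \DD_i$ sends a finite set $A \subseteq \NN$ to its closure $\overline A$ under those function symbols. Thus $F_i \circ G_i$ is an endofunctor of $\DD_i$ whereas $G_i \circ F_i$ is an endofunctor of $\CC$; in particular the second inequality should be read as $G_i \circ F_i \neq \ID_\CC$ (the symbol $\ID_{\DD_i}$ as written cannot be correct, since $G_i \circ F_i$ does not even have $\DD_i$ as its domain).

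First I would verify $F_i \circ G_i = \ID_{\DD_i}$ on objects. An object $\calD$ of $\DD_i$ is a finite substructure of $\calF_i$, hence its underlying set is already closed under the function symbols. Applying $G_i$ forgets the operations but leaves the underlying set unchanged, and applying $F_i$ then takes the closure of an already-closed set, returning $\calD$ together with the operations reconstructed as restrictions of those of $\calF_i$ --- which are precisely the original operations. The same bookkeeping works on morphisms: an embedding $\phi : \calC \to \calD$ in $\DD_i$ is carried by $G_i$ to the same underlying map (now a morphism of $\CC$), and $F_i$ extends a $\CC$-morphism between closed sets to the induced map of their closures, which here are the sets themselves, giving back $\phi$. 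Hence $F_i \circ G_i = \ID_{\DD_i}$ on the nose.

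Next I would show $G_i \circ F_i \neq \ID_\CC$. For $A \in \Ob(\CC)$ we have $G_i(F_i(A)) = \overline A$, the underlying set of the closure of $A$ under the function symbols of $\calF_i$, equipped with its Hodges-language structure. So $G_i \circ F_i = \ID_\CC$ would force $\overline A = A$ for every finite $A \subseteq \NN$, i.e.\ every finite subset of $\NN$ would have to be closed under the operations. In the intended setting this fails: the whole point of the construction is that $\calF_i$ carries genuine function symbols, so some finite $A$ is not closed and $\overline A \supsetneq A$, whence $G_i(F_i(A)) \neq A$ as objects of $\CC$. The only point requiring a word of care is this nondegeneracy: if the language of $\calF_i$ happened to be purely relational the closure would be trivial and one would in fact have $G_i \circ F_i = \ID_\CC$, so the inequality is precisely the assertion that the functional language is doing nontrivial work.

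Finally I would record that this is nothing but the reflective-subcategory picture: $G_i$ is a full faithful inclusion with left adjoint $F_i$, so $\DD_i$ is reflective in $\CC$; the counit $F_i G_i \to \ID_{\DD_i}$ is the identity (giving $F_i \circ G_i = \ID_{\DD_i}$), while the unit $\ID_\CC \to G_i F_i$ is the family of inclusions $A \hookrightarrow \overline A$, which is not the identity exactly when some closure is proper. This reframing makes transparent why an adjunction, rather than an equivalence, is the correct tool here, and beyond the nondegeneracy remark above there is no genuine obstacle in the argument.
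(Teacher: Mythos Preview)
The paper states this as an observation without proof, so there is no argument to compare against; your justification is correct and fills in what the paper leaves implicit. In particular your identification of the typo is right: $G_i \circ F_i$ is an endofunctor of $\CC$, not of $\DD_i$, so the intended inequality is $G_i \circ F_i \neq \ID_\CC$. Your closure argument for both directions is exactly what is needed, and the reflective-subcategory framing is a clean way to see why the adjunction is genuinely asymmetric. The one caveat you flag---that the inequality requires the functional part of the language of $\calF_i$ to be nontrivial---is also correct and worth keeping, since in the purely relational case the closure operation degenerates and $G_i \circ F_i$ would indeed be the identity.
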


We thank Christian Rosendal for letting us include the example $\KK_r$ studied in his unpublished notes.  The example $\KK_s$ is from \cite{sh90}.

\begin{DEF} Consider the following classes of trees as categories with embeddings as morphisms.

$$\KK_r = \age(\W, f,<)$$
$$\KK_s = \age(\W, \lhd, \wedge, <, \{P_n\}_n)$$
$$\KK_h = \age(\W, \Delta_{h(\Aut(\Flim \KK_r))}$$

where
\begin{itemize}
\item $\lhd$ is the partial tree order (sequence extension)
\item $\wedge$ is the meet in the partial order
\item $<$ is the lexicographic order on sequences (a linear extension of the partial order)
\item $P_n$ is a unary predicate picking out the $n$th level of the tree
\item $f$ is a unary function symbol giving the immediate $\lhd$-predecessor of any node
\end{itemize}
\end{DEF}

\noindent The following is guaranteed by Theorem \ref{kpt4.7}.

\begin{COR} $\KK_r$ has the Ramsey property for morphisms just in case $\KK_s$ has the Ramsey property for morphisms.
\end{COR}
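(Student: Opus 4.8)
The plan is to read this off from Theorem~\ref{equiv1}. Write $\DD_1$ for the category with $\Ob(\DD_1)=\KK_r$ and $\DD_2$ for the one with $\Ob(\DD_2)=\KK_s$, each with embeddings as morphisms; both $\KK_r$ and $\KK_s$ are \Fraisse\ classes (see \cite{sh90} for $\KK_s$), and we set $\calF_r=\Flim\KK_r$, $\calF_s=\Flim\KK_s$. Since the lexicographic order $<$ is one of the relations in each class, every object of $\DD_1$ and $\DD_2$ is rigid, so by Proposition~\ref{cerp.prop.rigid} the Ramsey property for objects and for morphisms coincide in these categories; I phrase everything for morphisms.

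The one point that needs genuine work is that $\Aut(\calF_r)$ and $\Aut(\calF_s)$ are isomorphic, and I would obtain this by realizing both \Fraisse\ limits on a single countable tree. Note first that the structure $(\W,f,<)$ whose age is $\KK_r$ is \emph{not} ultrahomogeneous: an order-automorphism of the $\omega$-indexed set of immediate successors of a node is forced to be the identity. Hence $\calF_r$ is instead (isomorphic to) the tree $T$ of finite sequences over a countable dense linear order without endpoints, carrying the immediate-predecessor function $f$ and the lexicographic order $<$; the density of the branching is exactly what lets a finite partial isomorphism be extended level by level. Shelah's construction realizes $\calF_s$ on the same tree $T$, now with the signature $\{\lhd,\wedge,<,\{P_n\}_n\}$. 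On $T$ the functional signature $\{f\}$ and the relational signature $\{\lhd,\wedge,\{P_n\}_n\}$ are mutually definable by quantifier-free $\LW$-formulas: $f$ recovers the tree order ($a\lhd b$ iff $\bigvee_{k\ge 1} f^k(b)=a$), the level predicates $P_n$ (by iterating $f$ down to its unique fixed point, the root), and the meet $\wedge$; conversely $f(a)=b$ holds precisely when $b\lhd a$ with $a$ and $b$ on consecutive levels. Consequently a permutation of $T$ is an automorphism of $(T,f,<)$ if and only if it is an automorphism of $(T,\lhd,\wedge,<,\{P_n\}_n)$, so $\Aut(\calF_r)=\Aut(\calF_s)=:G$. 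I expect this to be the real obstacle: not any individual verification, but pinning the two \Fraisse\ limits down concretely enough to see that they share an automorphism group.

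With $\Aut(\calF_r)\cong\Aut(\calF_s)$ established, Theorem~\ref{equiv1} applies verbatim to $\DD_1$ and $\DD_2$ and produces a category $\CC$ of finite structures together with adjunctions $F_1:\CC\rightleftarrows\DD_1:G_1$ and $F_2:\CC\rightleftarrows\DD_2:G_2$ with $G_1,G_2$ inclusions, such that $F_2\circ G_1:\DD_1\to\DD_2$ and $F_1\circ G_2:\DD_2\to\DD_1$ preserve the Ramsey property for morphisms. Concretely, if $\KK_r=\Ob(\DD_1)$ has the Ramsey property for morphisms, then $\CC$ has it by Theorem~\ref{cerp.thm.adj}~$(a)$ (as $G_1$ is a right adjoint), and then $\KK_s=\Ob(\DD_2)$ has it by Lemma~\ref{cofinal}, since $\Ob(\DD_2)$ sits inside $\CC$ as a full subcategory whose objects --- the closures of finite subsets of the underlying set under the function symbols of $\calF_s$ --- are cofinal in $\Ob(\CC)$; swapping the roles of the indices yields the converse implication. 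This gives the biconditional, and it is simply the categorical form of the principle behind Theorem~\ref{kpt4.7}: for a \Fraisse\ order class, having the Ramsey property for morphisms depends only on the topological automorphism group of its \Fraisse\ limit.
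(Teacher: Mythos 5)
Your proposal is correct and follows essentially the same route as the paper: establish that $\Aut(\Flim\KK_r)$ and $\Aut(\Flim\KK_s)$ coincide via mutual quantifier-free $\LW$-definability of the two signatures on a common realization of the limits, then invoke Theorem~\ref{equiv1} (via Theorem~\ref{cerp.thm.adj} and Lemma~\ref{cofinal}) to transfer the Ramsey property for morphisms in both directions. Your added care in noting that $(\W,f,<)$ itself is not ultrahomogeneous and in pinning the limits down on a densely branching tree is a sensible elaboration of what the paper leaves implicit.
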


\begin{proof} The classes are \Fraisse\ classes, so we may assume that $\calF_s = \Flim \KK_s$,
  $\calF_r = \Flim \KK_r$ share the same underlying set $\NN$.
  Every function and predicate symbol in $\KK_r$ is quantifier-free $\LW$-definable in the language of $\KK_s$ and vice-versa. Thus their automorphism groups have the same orbits on $n$-tuples from $\mathbb{N}$, and thus are the same group.
\end{proof}

\begin{OBS}
Substructures of $I_r$ are closed under the function symbols in $I_s$ so we could set up a direct adjunction $F : \KK_s \rightleftarrows \KK_r : G$.  In other words, the intermediary category $\CC$ from Theorem \ref{equiv1} can be jettisoned in favor of the more direct Theorem \ref{cerp.thm.adj}.
\end{OBS}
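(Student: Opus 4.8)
The plan is to make the two functors of the statement completely explicit and then to recover the preceding corollary from Theorem~\ref{cerp.thm.adj} and Lemma~\ref{cofinal} alone, that is, without the intermediary category $\CC$ of Theorem~\ref{equiv1}. As in the proof of that corollary, I would arrange that $\calF_r=\Flim\KK_r$ and $\calF_s=\Flim\KK_s$ share the underlying set $\NN$; their automorphism groups are then equal, say to $G$, so $G$ has the same orbits on $\NN^n$ for every $n$, and hence an injection between two finite subsets of $\NN$ is a partial isomorphism of $\calF_r$ if and only if it is a partial isomorphism of $\calF_s$. Throughout I would realize $\KK_r$ and $\KK_s$ as the full subcategories of finite substructures of $\calF_r$, respectively $\calF_s$ (equivalent to the given categories, hence harmless for the Ramsey property by Theorem~\ref{cerp.thm.dual}); with this reading, an injection underlying an embedding for one of the two signatures underlies an embedding for the other.

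The combinatorial core of the observation is that a finite $B\subseteq\NN$ that is closed under the predecessor function $f$ (hence underlies an object of $\KK_r$) is automatically closed under the tree-meet, since $a\wedge b=f^{k}(a)$ for suitable $k$ and $\wedge$ is the only operation in the $\KK_s$-signature; moreover the $f$-closure $\overline B$ of any finite $B$ is finite. Using this I would let $G:\KK_r\to\KK_s$ be the identity on underlying injections, sending the substructure of $\calF_r$ with domain $B$ to the substructure of $\calF_s$ with the same domain $B$ (legitimate because $B$ is $\wedge$-closed). By the first paragraph $G$ is full and faithful, its image is the full subcategory of $\KK_s$ on the $f$-closed finite subsets of $\NN$, and since every $\wedge$-closed finite $B$ is a substructure of the $f$-closed $\overline B$, that image is cofinal in $\KK_s$.

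Next I would define $F:\KK_s\to\KK_r$ by sending an object with domain $B$ to the substructure of $\calF_r$ with domain $\overline B$, and an embedding $\calB\to\calB'$ of $\KK_s$ to the restriction to $\overline B$ of any automorphism $g\in\Aut(\calF_r)$ extending it (such $g$ exists because the embedding, being an $\calF_s$-partial isomorphism, is an $\calF_r$-partial isomorphism and $\calF_r$ is ultrahomogeneous); this restriction lands in $\overline{B'}$ because automorphisms commute with $f$-closure, and it is independent of the choice of $g$ because an $\calF_r$-embedding of $\overline B$ is determined by its values on $B$. Restriction to $B$ and this unique extension are mutually inverse and natural, so $\hom_{\KK_r}(F(\calB),\calC)\cong\hom_{\KK_s}(\calB,G(\calC))$; that is, $F$ is left adjoint to the inclusion $G$, giving the adjunction $F:\KK_s\rightleftarrows\KK_r:G$ (and $F\circ G=\ID_{\KK_r}$, as in the observation following Theorem~\ref{equiv1}). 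Then Theorem~\ref{cerp.thm.adj}$(a)$ gives that the Ramsey property for morphisms of $\KK_r$ implies that of $\KK_s$, while Lemma~\ref{cofinal}, applied to the cofinal full subcategory $G(\KK_r)$ of the monic-morphism category $\KK_s$, gives the converse --- so the preceding corollary follows with Theorem~\ref{cerp.thm.adj} replacing the three-category argument.

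I expect the only genuine work to be in the middle step: because a map that ``is an embedding'' for the signature $\{f,<\}$ need not have $f$-closed domain, the identification of $\calF_r$-embeddings with $\calF_s$-embeddings is cleanest carried out at the level of partial isomorphisms and the common orbit structure of $G$, and one must check carefully that $F$ and $G$ really land in the stated categories. Once that is in place, fullness and faithfulness of $G$, the hom-set bijection witnessing $F\dashv G$, and both Ramsey-theoretic conclusions are routine.
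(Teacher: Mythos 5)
Your proposal is correct and fills in exactly the argument the paper sketches in this one-line observation: the fact that $f$-closed sets are $\wedge$-closed (since $a\wedge b=f^k(a)$) makes $G$ the evident full inclusion with the finite $f$-closure as its left adjoint $F$, and the Ramsey transfer then follows from Theorem~\ref{cerp.thm.adj} in one direction and Lemma~\ref{cofinal} (cofinality of the $f$-closed objects) in the other, just as in the proof of Theorem~\ref{equiv1} with the relational intermediary removed. The details you supply --- identifying embeddings for the two signatures via the common automorphism group and checking that $F$ is well defined and independent of the chosen extending automorphism --- are the right ones and are carried out correctly.
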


\section{Order expansions and topological dynamics under categorical equivalence}
\label{cerp.sec.oetd}

Let $\CC$ be a category of finite structures and embeddings, and $\CC^*$ a category of finite ordered
structures and embeddings. We say that $\CC^*$ \emph{is an order expansion of} $\CC$ (cf.~\cite{KPT}) if
\begin{itemize}
\item for every structure $\calA_< = (A, \Delta, \Boxed<) \in \Ob(\CC^*)$ we have
      that $\calA = (A, \Delta) \in \Ob(\CC)$, and
\item the forgetful functor $U : \CC^* \to \CC$ which acts on objects by $U(A, \Delta, \Boxed<) = (A, \Delta)$
      and on morphisms by $U(f) = f$ is surjective on objects.
\end{itemize}

An order expansion $\CC^*$ of $\CC$ is \emph{reasonable} (cf.~\cite{KPT})
if for all $\calA, \calB \in \Ob(\CC)$, every embedding $f : \calA \hookrightarrow \calB$ and every
$\calA_< \in \Ob(\CC^*)$ such that $U(\calA_<) = \calA$ there is a $\calB_\sqsubset \in \Ob(\CC^*)$
such that $U(\calB_\sqsubset) = \calB$ and $f$ is an embedding of $\calA_<$ into $\calB_\sqsubset$.
It is easy to show that if $\CC^*$ is a reasonable expansion of $\CC$ and $\CC^*$ has (HP), resp.\ (JEP) or (AP), then
$\CC$ has (HP), resp.\ (JEP) or (AP) (cf.~\cite{KPT}); consequently if $\Ob(\CC^*)$ is a \Fraisse\ age, then so is $\Ob(\CC)$.

Let $\CC^*$ be an order expansion of $\CC$.
We say that $\CC^*$ has the \emph{ordering property over} $\CC$ if the following holds:
for every $\calA \in \Ob(\CC)$ there is a $\calB \in \Ob(\CC)$ such that $\calA_< \embedsto \calB_\sqsubset$
for all $\calA_<, \calB_\sqsubset \in \Ob(\CC^*)$ such that $U(\calA_<) = \calA$ and $U(\calB_\sqsubset) = \calB$.
We say that $\calB$ is a \emph{witness of the ordering property for $\calA$}.

\begin{LEM}\label{cerp.lem.op}
  Let $\CC^*$ be a reasonable order expansion of $\CC$ with the forgetful functor $U : \CC^* \to \CC : \calA_< \mapsto \calA, f \mapsto f$
  and let $\DD^*$ be a reasonable order expansion of $\DD$ with the forgetful functor $V : \DD^* \to \DD : \calA_< \mapsto \calA, f \mapsto f$.
  Assume that $E^* : \CC^* \rightleftarrows \DD^* : H^*$ is a categorical equivalence of $\CC^*$ and $\DD^*$, that
  $E : \CC \rightleftarrows \DD : H$ is a categorical equivalence of $\CC$ and $\DD$, and that the following diagrams commute:
  $$
    \XYMATRIX{
      \CC^* \ar[r]^-{E^*} \ar[d]_-U & \DD^* \ar[d]^-V & & \CC^* \ar[d]_-U & \DD^* \ar[d]^-V \ar[l]_-{H^*}\\
      \CC   \ar[r]_-{E}             & \DD             & & \CC             & \DD \ar[l]^-{H}
    }
  $$
  Then $\CC^*$ has the ordering property over $\CC$ if and only if $\DD^*$ has the ordering property over $\DD$.
\end{LEM}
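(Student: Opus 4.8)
The plan is to prove just one implication --- say, that the ordering property of $\DD^*$ over $\DD$ implies the ordering property of $\CC^*$ over $\CC$ --- and then invoke symmetry. Indeed the whole hypothesis is invariant under simultaneously interchanging $\CC\leftrightarrow\DD$, $\CC^*\leftrightarrow\DD^*$, $U\leftrightarrow V$, $E\leftrightarrow H$ and $E^*\leftrightarrow H^*$: equivalences are symmetric, ``reasonable order expansion'' is a property of each pair separately, and the two commuting squares $VE^*=EU$ and $UH^*=HV$ are exchanged with one another. Since the conclusion is symmetric as well, the converse implication then comes for free.

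So assume $\DD^*$ has the ordering property over $\DD$, and fix $\calA\in\Ob(\CC)$. Applying the assumption to $E(\calA)\in\Ob(\DD)$ produces a witness $\calB^\sharp\in\Ob(\DD)$ of the ordering property for $E(\calA)$; I would then set $\calB:=H(\calB^\sharp)\in\Ob(\CC)$ and claim that $\calB$ witnesses the ordering property for $\calA$. To check this, take arbitrary $\calA_<,\calB_\sqsubset\in\Ob(\CC^*)$ with $U(\calA_<)=\calA$ and $U(\calB_\sqsubset)=\calB$, apply $E^*$, and read off the underlying $\DD$-objects using the left square $VE^*=EU$: one gets $V(E^*(\calA_<))=E(\calA)$ on the nose, while $V(E^*(\calB_\sqsubset))=EH(\calB^\sharp)$, which is isomorphic to $\calB^\sharp$ via the component $\epsilon_{\calB^\sharp}$ of the natural isomorphism $\epsilon:\ID_\DD\to EH$.

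The one genuinely non-formal point --- and the step I expect to be the main obstacle --- is exactly this mismatch: the ordering property as stated refers to objects lying over $\calB^\sharp$ \emph{exactly}, whereas $E^*(\calB_\sqsubset)$ lies over $EH(\calB^\sharp)$. I would dispose of it by observing that, for a category of ordered structures closed under isomorphism, ``being a witness of the ordering property'' is isomorphism-invariant in the witnessing object: given an isomorphism $\calB^\sharp\cong\calB^{\sharp\sharp}$ in $\DD$ one transports the added orderings back and forth along it, so $EH(\calB^\sharp)$ is again a witness for $E(\calA)$. (If one prefers not to assume isomorphism-closure of $\DD^*$, the same effect is had by lifting $\epsilon_{\calB^\sharp}$ to an isomorphism of $\DD^*$ with domain $E^*(\calB_\sqsubset)$.) Granting this, the ordering property of $\DD^*$ over $\DD$, applied to the pair $\bigl(E^*(\calA_<),E^*(\calB_\sqsubset)\bigr)$, which lies over $\bigl(E(\calA),EH(\calB^\sharp)\bigr)$, yields an embedding $g:E^*(\calA_<)\embedsto E^*(\calB_\sqsubset)$ in $\DD^*$.

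It remains to transport $g$ back to $\CC^*$. Applying $H^*$ gives an embedding $H^*(g):H^*E^*(\calA_<)\embedsto H^*E^*(\calB_\sqsubset)$ in $\CC^*$, and conjugating by the components of the natural isomorphism $\eta^*:\ID_{\CC^*}\to H^*E^*$ produces $(\eta^*_{\calB_\sqsubset})^{-1}\cdot H^*(g)\cdot\eta^*_{\calA_<}:\calA_<\embedsto\calB_\sqsubset$ in $\CC^*$ --- a composite of embeddings and isomorphisms, hence itself an embedding. As $\calA_<$ and $\calB_\sqsubset$ ranged over all objects of $\CC^*$ above $\calA$ and $\calB$ respectively, $\calB$ is a witness for $\calA$, so $\CC^*$ has the ordering property over $\CC$; the converse is the symmetric statement. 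The routine bookkeeping --- that the commuting squares deliver the claimed identifications, that the $\eta^*$-components are isomorphisms of $\CC^*$, and that the relevant composites are embeddings --- I would leave implicit.
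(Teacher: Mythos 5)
Your proposal is correct and follows essentially the same route as the paper's proof: read off the reducts via the commuting squares, repair the mismatch between the witness and its image under the round-trip functor (your $EH(\calB^\sharp)$ versus $\calB^\sharp$) by moving the added ordering across that isomorphism using reasonableness/isomorphism-closure, and transport the resulting embedding back along the natural isomorphism of the equivalence. The only differences are cosmetic --- which implication is argued explicitly (the paper goes from $\CC^*$ to $\DD^*$ and fixes the analogous mismatch $HE(\calB)\cong\calB$ on the $\CC^*$ side via reasonableness of $\CC^*$) --- and these are immaterial by the symmetry you point out.
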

\begin{proof}
  Assume that $\CC^*$ has the ordering property over $\CC$ and let us show that $\DD^*$ has the ordering property over $\DD$.
  Take any $\calA \in \Ob(\DD)$. Then $H(\calA) \in \Ob(\CC)$ so, by the ordering property, there is a $\calB \in \Ob(\CC)$
  which is a witness of the ordering property for $H(\calA)$. Let us show that $E(\calB) \in \Ob(\DD)$
  is a witness of the ordering property for $\calA$. Take any $\calA_<, \calB_\sqsubset \in \Ob(\DD^*)$ such that
  $V(\calA_<) = \calA$ and $V(\calB_\sqsubset) = E(\calB)$ and let us show that $\calA_< \hookrightarrow \calB_\sqsubset$.

  Let us first show that $H^*(\calA_<) \hookrightarrow H^*(\calB_\sqsubset)$. Note first that $UH^*(\calA_<) = HV(\calA_<) = H(\calA)$
  and that $UH^*(\calB_\sqsubset) = HV(\calB_\sqsubset) = HE(\calB) \cong \calB$. Since $\CC^*$ is a reasonable order expansion of $\CC$,
  there is a $\calB_\prec \in \Ob(\CC^*)$ such that $\calB_\prec \cong H^*(\calB_\sqsubset)$ and $U(\calB_\prec) = \calB$.
  Since $\CC^*$ has the ordering property over $\CC$ and $\calB$ is a witness of the ordering property for $H(\calA)$,
  we have that $H^*(\calA_<) \hookrightarrow \calB_\prec \cong H^*(\calB_\sqsubset)$. Therefore,
  $\calA_< \cong E^*H^*(\calA_<) \hookrightarrow E^*H^*(\calB_\sqsubset) \cong \calB_\sqsubset$.
\end{proof}

Let $G$ be a topological group. Its \emph{action} on $X$ is a mapping $\Boxed \cdot : G \times X \to X$
such that $1 \cdot x = x$ and $g \cdot (f \cdot x) = (gf) \cdot x$.
We also say that $G$ \emph{acts} on $X$. A \emph{$G$-flow} is a continuous action of a topological group $G$
on a topological space $X$. A \emph{subflow} of a $G$-flow $\Boxed\cdot : G \times X \to X$
is a continuous map $\Boxed* : G \times Y \to Y$ where $Y \subseteq X$ is a closed subspace of $X$ and
$g * y = g \cdot y$ for all $g \in G$ and $y \in Y$.
A $G$-flow $G \times X \to X$ is \emph{minimal} if it has no proper closed subflows.
A $G$-flow $u : G \times X \to X$ is \emph{universal}
if every compact minimal $G$-flow $G \times Z \to Z$ is a factor of~$u$.
It is a well-known fact that for a compact Hausdorff space $X$ there is, up to isomorphism of $G$-flows,
a unique universal minimal $G$-flow, usually denoted by $G \curvearrowright M(G)$.

A topological group $G$ is \emph{extremely amenable}
if every $G$-flow $\Boxed\cdot : G \times X \to X$
on a compact Hausdroff space $X$ has a joint fix point, that is, there is an $x_0 \in X$ such that $g \cdot x_0 = x_0$
for all $g \in G$. Since $\Sym(A)$ carries naturally the topology of pointwise convergence, permutation groups can
be thought of as topological groups. For example, it was shown in~\cite{Pestov-1998} that $\Aut(\QQ, \Boxed<)$ is extremely amenable
while $\Sym(A)$, the group of all permutations on $A$, is not for a countably infinite set $A$.

The following is a corollary of Theorem \ref{kpt4.7}

\begin{COR}
  Let $\calA$ be a primal algebra and let $<$ be a linear order on $A$. Then
  $\OV(\calA, \Boxed<)$ is a Ramsey age (Theorem~\ref{cerp.thm.1}), so the
  automorphism group of its \Fraisse\ limit is extremely amenable.
\end{COR}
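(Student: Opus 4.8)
The plan is to read the statement directly off Theorem~\ref{kpt4.7} (\cite[Theorem 4.7]{KPT}), using Theorem~\ref{cerp.thm.1} to supply the Ramsey hypothesis. By Theorem~\ref{cerp.thm.1}, $\OV(\calA, \Boxed<)$ is a Ramsey age, hence in particular a \Fraisse\ age; so it admits a \Fraisse\ limit $\calF$, a countable ultrahomogeneous structure with $\age(\calF) = \Ob(\OV(\calA, \Boxed<))$. We may realise $\calF$ on the underlying set $\NN$, so that $\Aut(\calF)$ is a closed subgroup of $\Sym(\NN)$ in the topology of pointwise convergence.

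First I would observe that $\age(\calF)$ has the Ramsey property in the sense required by Theorem~\ref{kpt4.7}: the objects of $\OV(\calA, \Boxed<)$ are \emph{ordered} structures, so, as noted in Section~\ref{cerp.sec.prelim}, all the relations $\sim$ on the relevant hom-sets are trivial and the Ramsey property for objects coincides with the Ramsey property for morphisms; the latter is exactly the content of ``$\OV(\calA, \Boxed<)$ is a Ramsey age''.

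Next I would check rigidity. Every object of $\OV(\calA, \Boxed<)$ is isomorphic to some $\calA^n_{\Boxed{\sqsubseteq_\pi}}$, which carries the linear order $\sqsubseteq_\pi$ on the finite set $A^n$. Any automorphism of such a structure is, in particular, an order-automorphism of a finite linear order, hence the identity; so $\age(\calF)$ consists of rigid structures. With both hypotheses of Theorem~\ref{kpt4.7} verified, that theorem yields that $G = \Aut(\calF)$ is extremely amenable. There is no real obstacle here: the only points that need a moment's care --- that the two flavours of Ramsey property agree for ordered structures, and that finite ordered structures are automatically rigid --- are both already recorded in Section~\ref{cerp.sec.prelim}.
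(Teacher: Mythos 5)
Your argument is correct and is exactly the route the paper takes: the paper offers no written proof beyond the remark that the statement is a corollary of Theorem~\ref{kpt4.7} combined with Theorem~\ref{cerp.thm.1}, and your write-up simply makes explicit the two hypothesis checks (rigidity of finite ordered structures and the coincidence of the two Ramsey properties in the ordered setting) that the paper leaves implicit, both of which are indeed already recorded in Section~\ref{cerp.sec.prelim}. The only point you might flag for completeness is that the language of $\OV(\calA, \Boxed<)$ contains function symbols, so strictly one should invoke the reduction to relational languages discussed in the paper's Appendix~2 before applying \cite[Theorem 4.7]{KPT}.
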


Let $\LO(A)$ be the set of all linear orders on $A$ and let $G$ be a closed subgroup of $\Sym(A)$.
The set $\LO(A)$ with the topology of pointwise convergence is a compact Hausdorff
space and the action of $G$ on $\LO(A)$ given by $x \mathbin{<^g} y \text{ if and only if } g^{-1}(x) < g^{-1}(y)$ is continuous.
This action is usually referred to as the \emph{logical action of $G$ on $\LO(A)$}.

\begin{THM} \cite[Theorem 10.8]{KPT}\label{cerp.thm.KPT2}
  Let $\KK^*$ be a \Fraisse\ age which is a reasonable order expansion of a \Fraisse\ age $\KK$.
  Let $\calF$ be the \Fraisse\ limit of $\KK$, let $\calF_\sqsubset$ be the \Fraisse\ limit of $\KK^*$,
  let $G = \Aut(\calF)$ and $X^* = \overline{G \cdot \Boxed\sqsubset}$ (in the logical action of $G$ on $\LO(F)$).
  Then the logical action of $G$ on $X^*$ is the universal minimal flow of $G$ if and only if
  the class $\KK^*$ has the Ramsey property, as well as the ordering property with respect to $\KK$.
\end{THM}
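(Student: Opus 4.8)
The plan is to prove this along the lines of Kechris--Pestov--Todor\v cevi\'c, reducing the equivalence to two lemmas about the flow $G\curvearrowright X^*$. First I would fix the setup: since $\KK^*$ is a reasonable order expansion of $\KK$, one may realise $\calF_\sqsubset=\Flim(\KK^*)$ on the same underlying set $F$ as $\calF=\Flim(\KK)$, so that the forgetful reduct of $\calF_\sqsubset$ is $\calF$ and $\sqsubset$ is an actual point of $\LO(F)$. Now $\LO(F)$ is a compact metrizable subspace of $2^{F\times F}$ on which the logical action of $G$ is continuous, and $X^*=\overline{G\cdot\sqsubset}$ is closed and $G$-invariant, hence a compact $G$-flow with dense orbit $G\cdot\sqsubset$; thus $(X^*,\sqsubset)$ is a $G$-ambit. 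The basic observation is $\mathrm{Stab}_G(\sqsubset)=\Aut(\calF_\sqsubset)=:H$, since an automorphism of $\calF$ fixes $\sqsubset$ exactly when it preserves the order. By ultrahomogeneity of $\calF_\sqsubset$ the orbit map $G/H\to G\cdot\sqsubset\subseteq X^*$ is a $G$-homeomorphism onto its dense image.

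Next I would isolate two lemmas. \emph{Lemma A:} $G\curvearrowright X^*$ is minimal if and only if $\KK^*$ has the ordering property over $\KK$. Since $X^*=\overline{G\cdot\sqsubset}$, minimality is equivalent to $\sqsubset\in\overline{G\cdot\sqsubset'}$ for every $\sqsubset'\in X^*$, which unwinds (using the product topology on $2^{F\times F}$) to the statement that every member of $\KK^*$ embeds into $(F,\Delta,\sqsubset')$; and since, conversely, the finite substructures of $(F,\Delta,\sqsubset')$ for $\sqsubset'\in X^*$ are exactly the members of $\KK^*$ (density of $G\cdot\sqsubset$ together with reasonableness), a compactness argument using metrizability of $X^*$ and amalgamation in $\KK^*$ identifies this condition with the ordering property. \emph{Lemma B:} every member of $\KK^*$ carries a linear order and is therefore rigid, so Theorem~\ref{kpt4.7} applied to $\calF_\sqsubset$ (whose age is $\KK^*$) says that $\KK^*$ has the Ramsey property if and only if $H$ is extremely amenable.

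For $(\Leftarrow)$, assume $\KK^*$ has the Ramsey property and the ordering property. Lemma~A gives minimality of $X^*$ and Lemma~B gives that $H$ is extremely amenable. To see that $X^*$ is universal, let $G\curvearrowright Y$ be an arbitrary minimal compact $G$-flow; restricting to $H$ and using extreme amenability, fix $y_0\in Y$ with $H\cdot y_0=\{y_0\}$. Because $\mathrm{Stab}_G(\sqsubset)=H\subseteq\mathrm{Stab}_G(y_0)$, the rule $g\cdot\sqsubset\mapsto g\cdot y_0$ is a well-defined, continuous, $G$-equivariant map $\phi_0$ on the dense orbit $G\cdot\sqsubset$. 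Let $\Gamma$ be the closure of the graph of $\phi_0$ in $X^*\times Y$: it is a closed $G$-invariant set whose projection to $X^*$ is onto by minimality. If $\Gamma$ is the graph of a (necessarily continuous, $G$-equivariant) function $\phi:X^*\to Y$, then $\phi(X^*)$ is a nonempty closed invariant subset of the minimal flow $Y$, hence equals $Y$, so $Y$ is a factor of $X^*$ and $X^*$ is the universal minimal flow. For the converse $(\Rightarrow)$, if $X^*$ is the universal minimal flow then it is minimal, so the ordering property holds by Lemma~A; and given any compact $H$-flow $Z$, the induced-flow construction yields a compact $G$-flow whose minimal subflow is a $G$-factor of the universal $X^*$, and evaluating this factor map at the $H$-fixed point $\sqsubset$ produces an $H$-fixed point of $Z$, so $H$ is extremely amenable and the Ramsey property follows from Lemma~B.

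The step I expect to be the main obstacle is the single-valuedness of $\Gamma$ in the $(\Leftarrow)$ direction --- equivalently, that the equivariant map $\phi_0$ on the dense orbit $G\cdot\sqsubset$ extends continuously to all of $X^*$. This is the only place where the global strength of the Ramsey property (beyond bare minimality) is used. I would handle it via the ambit formalism: the $G$-ambit $(X^*,\sqsubset)$ is a quotient of the greatest ambit $S(G)$, and the right-uniformly continuous bounded map $g\mapsto g\cdot y_0$ on $G$ extends to a $G$-map $S(G)\to Y$; one then checks that this extension descends through the quotient $S(G)\to X^*$, the point being that minimality of $X^*$ forces the fibres of $S(G)\to X^*$ to be no larger than those of $S(G)\to Y$, given that $H=\mathrm{Stab}(\sqsubset)\subseteq\mathrm{Stab}(y_0)$. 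Executing this descent carefully, together with the attendant topological subtleties for the Polish group $G$ (also present in the induced-flow construction used for $(\Rightarrow)$), is where the genuine work lies; the remainder is bookkeeping with Theorem~\ref{kpt4.7} and Lemma~A.
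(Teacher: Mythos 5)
The paper does not prove this statement: it is quoted from Kechris--Pestov--Todor\v cevi\'c (their Theorem 10.8) and used as a black box, so there is no in-paper argument to compare yours against. Judged on its own merits, your outline has the right skeleton --- Lemma A (minimality of $X^*$ $\Leftrightarrow$ ordering property) and Lemma B (Ramsey property of $\KK^*$ $\Leftrightarrow$ extreme amenability of $H=\Aut(\calF_\sqsubset)$, via Theorem~\ref{kpt4.7} and rigidity of ordered structures) are exactly the two ingredients, and the stabilizer computation $\mathrm{Stab}_G(\sqsubset)=H$ and the ambit structure of $(X^*,\sqsubset)$ are correctly identified.

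However, the step you yourself flag as the main obstacle is a genuine gap, and the mechanism you propose for closing it does not work. Minimality of $X^*$ gives surjectivity of the projection of $\Gamma=\overline{\mathrm{graph}(\phi_0)}$ onto $X^*$, but it says nothing about the fibres of the canonical ambit map $S(G)\to X^*$ over points outside the orbit $G\cdot\Boxed\sqsubset$, and the inclusion $H\subseteq\mathrm{Stab}_G(y_0)$ only controls the fibre over $\sqsubset$ itself; so ``minimality forces the fibres of $S(G)\to X^*$ to be no larger than those of $S(G)\to Y$'' does not follow from anything you have established. What actually closes the gap is a uniformity computation specific to the logical action: the orbit map $G/H\to G\cdot\Boxed\sqsubset$ is a uniform isomorphism when $G/H$ carries the quotient of the right uniformity of $G$ and the orbit carries the uniformity inherited from $\LO(F)\subseteq 2^{F\times F}$ (basic entourages being agreement of two orders on a fixed finite subset of $F$), so that $X^*=\overline{G\cdot\Boxed\sqsubset}$ is the completion of $G/H$. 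Since $g\mapsto g\cdot y_0$ is right uniformly continuous into the compact space $Y$ and constant on left $H$-cosets, it extends to $X^*$ for free, and no graph-closure argument is needed. This also corrects your diagnosis: the Ramsey property enters only through producing the $H$-fixed point $y_0$, not through the extension step. A secondary issue is the $(\Rightarrow)$ direction, where ``the induced-flow construction'' is not available off the shelf for a non-locally-compact Polish group $G$; the same identification of $X^*$ with the completion of $G/H$ (or an argument through the greatest ambit) is what lets one turn a compact $H$-flow into a $G$-flow, and this needs to be carried out rather than invoked.
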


We shall now apply this result to the classes $\OV(\calA, \Boxed<)$ and $\VV(\calA)$, where
$\calA$ is a primal algebra and let $<$ be a linear order on $A$. Let us first show that the former
is a reasonable order expansion of the latter.

\begin{LEM}
  Let $\calA$ be a primal algebra and let $<$ be a linear order on $A$.
  Then $\OV(\calA, \Boxed<)$ is a reasonable order expansion of $\VV(\calA)$.
\end{LEM}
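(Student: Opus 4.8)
The plan is to verify the two clauses in the definition of ``order expansion'' and then the extra clause making the expansion ``reasonable''; only the last has any real content. For the first clause, every object of $\OV(\calA,\Boxed<)$ is an isomorphic copy of some $\calA^n_{\Boxed{\sqsubseteq_\pi}}$, whose reduct is an isomorphic copy of the finite power $\calA^n$; since $\calA^n$ lies in the variety $\mathbf{V}(\calA)$ generated by $\calA$, its reduct is an object of $\VV(\calA)$. For surjectivity of the forgetful functor $U$, recall (see \cite{burris-sankappanavar,jezek} and the discussion preceding Lemma~\ref{cerp.lem.mor}) that, $\calA$ being primal, every finite algebra in $\mathbf{V}(\calA)$ is isomorphic to a power $\calA^n$; since $\calA^n = U(\calA^n_{\Boxed{\sqsubseteq_{\id}}})$ and $\OV(\calA,\Boxed<)$ is closed under isomorphic copies, $U$ is onto on objects.

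For reasonableness, take an embedding $f : \calB_1 \hookrightarrow \calB_2$ in $\VV(\calA)$ and an object $\calC \in \Ob(\OV(\calA,\Boxed<))$ with $U(\calC) = \calB_1$; the goal is to produce $\calD \in \Ob(\OV(\calA,\Boxed<))$ with $U(\calD) = \calB_2$ into which $f$ embeds $\calC$. Since the asserted property is invariant under isomorphism of the whole configuration, and every finite member of $\mathbf{V}(\calA)$ is isomorphic to a power, I may assume $\calB_1 = \calA^n$ and $\calB_2 = \calA^m$. Moreover, by the description of homomorphisms $\calA^n \to \calA^n$ recalled before Lemma~\ref{cerp.lem.mor}, every algebra automorphism of $\calA^n$ is a coordinate permutation, and transporting an order $\sqsubseteq_\pi$ along such an automorphism again yields an order of the form $\sqsubseteq_{\pi'}$; hence any linear order making $\calA^n$ an object of $\OV(\calA,\Boxed<)$ is literally some $\sqsubseteq_\pi$, so I may take $\calC = \calA^n_{\Boxed{\sqsubseteq_\pi}}$. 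By that same description, $f$ has the form $f(x_1,\dots,x_n) = (x_{i_1},\dots,x_{i_m})$ with $\{i_1,\dots,i_m\} = \{1,\dots,n\}$.

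It then suffices to find a permutation $\sigma$ of $\{1,\dots,m\}$ for which $f$ is an embedding of $\calA^n_{\Boxed{\sqsubseteq_\pi}}$ into $\calA^m_{\Boxed{\sqsubseteq_\sigma}}$, since $U(\calA^m_{\Boxed{\sqsubseteq_\sigma}}) = \calA^m$ and we may put $\calD := \calA^m_{\Boxed{\sqsubseteq_\sigma}}$. As $f$ is already injective, Lemma~\ref{cerp.lem.mor} reduces this to choosing $\sigma$ so that the numbers $j_s = \pi^{-1}(i_{\sigma(s)})$ satisfy conditions (i) and (ii) of that lemma. Set $g_t = \pi^{-1}(i_t)$; then $(g_1,\dots,g_m)$ is a sequence over $\{1,\dots,n\}$ in which every value of $\{1,\dots,n\}$ occurs at least once. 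Pick $\sigma$ so that $(g_{\sigma(1)},\dots,g_{\sigma(m)})$ lists all occurrences of $1$, then all occurrences of $2$, and so on up to all occurrences of $n$; such a $\sigma$ exists because this is merely a rearrangement of the same multiset. Writing $j_s = g_{\sigma(s)} = \pi^{-1}(i_{\sigma(s)})$, the last entry $j_m$ equals $n$ (the final block consists of $n$'s and is nonempty), which gives~(i); and whenever $j_s = k < n$, the entries $j_{s+1},\dots,j_m$ comprise the remaining $k$'s together with all occurrences of $k+1,\dots,n$, so $\{k+1,\dots,n\} \subseteq \{j_{s+1},\dots,j_m\}$, which gives~(ii). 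Lemma~\ref{cerp.lem.mor} then yields that $f$ is an embedding of $\calA^n_{\Boxed{\sqsubseteq_\pi}}$ into $\calA^m_{\Boxed{\sqsubseteq_\sigma}}$, as required.

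I expect the only slightly delicate point to be the reduction in the second paragraph: checking that passing to isomorphic copies does not introduce linear orders outside the family $\{\sqsubseteq_\pi\}$, which rests on the coordinate-permutation description of the automorphisms of $\calA^n$. The construction of $\sigma$ is then a short combinatorial step that essentially inverts the computation carried out in the proof of the preceding lemma establishing (HP) for $\OV(\calA,\Boxed<)$.
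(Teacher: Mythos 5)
Your proposal is correct and follows essentially the same route as the paper: reduce to an embedding $f(x_1,\dots,x_n)=(x_{i_1},\dots,x_{i_m})$ between powers, group the positions $t$ according to the value $\pi^{-1}(i_t)$, choose $\sigma$ to list these groups in increasing order, and check conditions (i) and (ii) of Lemma~\ref{cerp.lem.mor} (your $g_t$-blocks are exactly the paper's sets $A_s$). The extra bookkeeping you supply --- verifying the two clauses of ``order expansion'' and justifying, via the coordinate-permutation description of $\Aut(\calA^n)$, that one may assume the given order is literally some $\sqsubseteq_\pi$ --- is sound and merely makes explicit what the paper leaves implicit.
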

\begin{proof}
  Let $f : \calA^n \hookrightarrow \calA^m$ be an embedding, and let
  $i_1, \ldots, i_m$ be indices such that $f(x_1, \ldots, x_n) = (x_{i_1}, \ldots, x_{i_m})$
  and $\{i_1, \ldots, i_m\} = \{1, \ldots, n\}$. Take any permutation
  $\pi$ of $\{1, 2, \ldots, n\}$ and let us find a permutation $\sigma$ of $\{1, 2, \ldots, m\}$
  such that $f$ is an embedding of $\calA^n_{\sqsubseteq_\pi}$ into $\calA^m_{\sqsubseteq_\sigma}$.
  For an arbitrary $s \in \{1, \ldots, n\}$ let $A_s = \{ t \in \{1, \ldots, m\} : \pi(s) = i_t \}$. Note that
  $\{A_1, \ldots, A_n\}$ is a partition of $\{1, \ldots, m\}$ because $\{i_1, \ldots, i_m\} = \{1, \ldots, n\}$.

  Let $\sigma$ be any permutation of $\{1, \ldots, m\}$ such that $\sigma^{-1}(A_1) = \{1, \ldots, k_1\}$,
  $\sigma^{-1}(A_2) = \{k_1+1, \ldots, k_2\}$, \ldots, $\sigma^{-1}(A_n) = \{k_{n-1}+1, \ldots, m\}$ and define
  $(j_1, \ldots, j_m)$ as follows: $j_1 = \ldots = j_{k_1} = 1$,
  $j_{k_1+1} = \ldots = j_{k_2} = 2$, \ldots, $j_{k_{n-1}+1} = \ldots = j_{m} = n$.
  Then it is easy to verify that $\pi(j_s) = i_{\sigma(s)}$ for all $s$ and that $(j_1, \ldots, j_m)$
  satisfies (i) and (ii) of Lemma~\ref{cerp.lem.mor}. Now, Lemma~\ref{cerp.lem.mor} ensures that
  $f$ is a homomorphism, and hence an embedding, of $\calA^n_{\sqsubseteq_\pi}$ into $\calA^m_{\sqsubseteq_\sigma}$.
\end{proof}

Let $\calA$ be a primal algebra and let $<$ be a linear order on $A$.
The class $\OV(\calA, \Boxed<)$ is a Ramsey age by Theorem~\ref{cerp.thm.1}, while Lemma~\ref{cerp.lem.op} ensures that
$\OV(\calA, \Boxed<)$ has the ordering property over $\VV(\calA)$. (The categorical equivalences in question are
$\BA \rightleftarrows \VV(\calA)$ and $\OBA \rightleftarrows \OV(\calA, \Boxed<)$ established in Example~\ref{cerp.ex.hu}
and Theorem~\ref{cerp.thm.k-oba}.) Then Theorem~\ref{cerp.thm.KPT2} yields:

\begin{COR}
  Let $\calA$ be a primal algebra and let $<$ be a linear order on $A$.
  Let $\calF$ be the \Fraisse\ limit of $\VV(\calA)$, let $\calF_\sqsubset$ be the \Fraisse\ limit of $\OV(\calA, \Boxed<)$,
  let $G = \Aut(\calF)$ and $X^* = \overline{G \cdot \Boxed\sqsubset}$ (in the logical action of $G$ on $\LO(F)$).
  Then the logical action of $G$ on $X^*$ is the universal minimal flow of $G$.
\end{COR}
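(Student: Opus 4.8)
The plan is to obtain the statement as an immediate application of Theorem~\ref{cerp.thm.KPT2} (the Kechris--Pestov--Todor\v cevi\'c correspondence) to the \Fraisse\ age $\KK = \VV(\calA)$ and its order expansion $\KK^* = \OV(\calA, \Boxed<)$. To quote that theorem I need four inputs: both $\VV(\calA)$ and $\OV(\calA, \Boxed<)$ are \Fraisse\ ages; $\OV(\calA, \Boxed<)$ is a reasonable order expansion of $\VV(\calA)$; $\OV(\calA, \Boxed<)$ has the Ramsey property; and $\OV(\calA, \Boxed<)$ has the ordering property over $\VV(\calA)$. Three of these are already available: Theorem~\ref{cerp.thm.1} gives that $\OV(\calA, \Boxed<)$ is a Ramsey age, hence a \Fraisse\ age satisfying the Ramsey property; the lemma just proved shows it is a reasonable order expansion of $\VV(\calA)$; and since (HP), (JEP), (AP) pass from a class to any class of which it is a reasonable order expansion, $\VV(\calA)$ is a \Fraisse\ age as well (this also follows from Example~\ref{cerp.ex.hu} and Hu's theorem~\cite{hu1,hu2} together with the transfer of amalgamation across categorical equivalence).

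It remains to produce the ordering property, and for this I would appeal to Lemma~\ref{cerp.lem.op}. I set $\CC = \BA$, $\DD = \VV(\calA)$, $\CC^* = \OBA$, $\DD^* = \OV(\calA, \Boxed<)$, take the equivalence $\CC \rightleftarrows \DD$ to be the one from Hu's theorem (Example~\ref{cerp.ex.hu}) and the equivalence $\CC^* \rightleftarrows \DD^*$ to be the one from Theorem~\ref{cerp.thm.k-oba}, and check that the two squares of Lemma~\ref{cerp.lem.op} commute with the forgetful functors. This is immediate from the explicit descriptions: the functor of Theorem~\ref{cerp.thm.k-oba} sends $\Bii^n_{\Boxed{\sqsubseteq_\pi}}$ to $\calA^n_{\Boxed{\sqsubseteq_\pi}}$ and keeps the index data of a morphism, the forgetful functors delete the order, and the Hu equivalence sends $\Bii^n$ to $\calA^n$; so the two composites agree on objects and on morphisms. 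Lemma~\ref{cerp.lem.op} then transports the ordering property of $\OBA$ over $\BA$ to the ordering property of $\OV(\calA, \Boxed<)$ over $\VV(\calA)$.

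The single external ingredient is that $\OBA$ has the ordering property over $\BA$, which is part of the analysis of finite boolean algebras with the natural order in~\cite{KPT}. Once all hypotheses are in place, Theorem~\ref{cerp.thm.KPT2} delivers exactly the claim that the logical action of $G = \Aut(\calF)$ on $X^* = \overline{G \cdot \Boxed\sqsubset}$ is the universal minimal flow of $G$. I expect the only real work to be the compatibility check of the previous paragraph --- confirming that the two categorical equivalences respect the forgetful functors so that Lemma~\ref{cerp.lem.op} genuinely applies; the rest is assembling results already proved in the paper.
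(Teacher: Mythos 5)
Your proposal is correct and follows essentially the same route as the paper: the text preceding the corollary assembles exactly these ingredients --- Theorem~\ref{cerp.thm.1} for the Ramsey property, the preceding lemma for reasonableness, and Lemma~\ref{cerp.lem.op} applied to the equivalences $\BA \rightleftarrows \VV(\calA)$ and $\OBA \rightleftarrows \OV(\calA, \Boxed<)$ for the ordering property --- before invoking Theorem~\ref{cerp.thm.KPT2}. If anything, your write-up is more careful than the paper's, since you make explicit the commutativity check needed for Lemma~\ref{cerp.lem.op} and the external input that $\OBA$ has the ordering property over $\BA$, both of which the paper leaves implicit.
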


\section{Appendix 1: The product Ramsey theorem revisited}
\label{cerp.sec.prod}

In this section we generalize the product Ramsey theorem of M.~Soki\'c~\cite{sokic2,sokic-phd}.
We provide an abstract proof that if two categories have
some of Ramsey property, then their categorical product has the ``combined'' kind of Ramsey
property. As a consequence we have that for every category $\CC$ and every positive integer $n$,
if $\CC$ a (dual) Ramsey property then so does $\CC^n$, showing thus a
metaresult that every finite (dual) Ramsey theorem has the finite product version.

For categories $\CC_1$ and $\CC_2$ there is a category $\CC_1 \times \CC_2$ whose objects are pairs $(\calA_1, \calA_2)$
where $\calA_1 \in \Ob(\CC_1)$ and $\calA_2 \in \Ob(\CC_2)$,
morphisms are pairs $(f_1, f_2) : (\calA_1, \calA_2) \to (\calB_1, \calB_2)$ where
$f_i$ is a morphism from $\calA_i$ to $\calB_i$ in $\CC_i$, $i \in \{1, 2\}$, and the composition of morphisms
is carried out componentwise: $(f_1, f_2) \cdot (g_1, g_2) = (f_1 \cdot g_1, f_2 \cdot g_2)$.
Clearly, if $\tilde\calA = (\calA_1, \calA_2)$ and $\tilde\calB = (\calB_1, \calB_2)$ are objects of $\CC_1 \times \CC_2$
then
$$
  \hom(\tilde\calA, \tilde\calB) = \hom(\calA_1, \calB_1) \times \hom(\calA_2, \calB_2)
$$
and
$$
  \binom{\tilde\calB}{\tilde\calA} = \binom{\calB_1}{\calA_1} \times \binom{\calB_2}{\calA_2}.
$$
By iterating this construction we now see that
for each category $\CC$ and each $n \in \NN$ there is a category $\CC^n$ whose objects are tuples $(\calA_1, \ldots, \calA_n)$
of objects of $\CC$ and morphisms are tuples $(f_1, \ldots, f_n)$ of morphisms of $\CC$.

\begin{THM}
  Let $\CC_1$ and $\CC_2$ be categories such that $|\hom_{\CC_i}(\calA, \calB)|$ is finite
  for each $i \in \{1, 2\}$ and all $\calA, \calB \in \Ob(\CC_i)$.

  If $\CC_1$ and $\CC_2$ both have the Ramsey property for objects (morphisms) then
  $\CC_1 \times \CC_2$ has the Ramsey property for objects (morphisms).
\end{THM}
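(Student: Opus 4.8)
The plan is to run the standard two-step ``stepping-up'' argument that underlies every product Ramsey theorem: first collapse the product colouring to a colouring that lives on $\binom{\calC_1}{\calA_1}$ alone, by absorbing the second coordinate into a larger palette, and then destroy that residual colouring with a single application of the Ramsey property in $\CC_1$. Fix $k \ge 2$ and objects $\tilde\calA = (\calA_1,\calA_2)$, $\tilde\calB = (\calB_1,\calB_2)$ of $\CC_1 \times \CC_2$ with $\tilde\calA \to \tilde\calB$, i.e.\ $\calA_i \to \calB_i$ in $\CC_i$ for $i\in\{1,2\}$. The order of the two applications is what matters. I would \emph{first} invoke the Ramsey property for objects in $\CC_1$ to obtain $\calC_1 \in \Ob(\CC_1)$ with $\calC_1 \longrightarrow (\calB_1)^{\calA_1}_k$; only then, using that $r := |\binom{\calC_1}{\calA_1}|$ is \emph{finite} (this is exactly where the hypothesis that all hom-sets are finite is used), I would invoke the Ramsey property for objects in $\CC_2$ with $k^r$ colours to obtain $\calC_2 \in \Ob(\CC_2)$ with $\calC_2 \longrightarrow (\calB_2)^{\calA_2}_{k^r}$. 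The claim will be that $\tilde\calC := (\calC_1,\calC_2)$ satisfies $\tilde\calC \longrightarrow (\tilde\calB)^{\tilde\calA}_k$; note $\tilde\calB \to \tilde\calC$ is automatic.

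Given an arbitrary $k$-colouring $\chi$ of $\binom{\tilde\calC}{\tilde\calA} = \binom{\calC_1}{\calA_1} \times \binom{\calC_2}{\calA_2}$ (using the product identity for $\binom{-}{-}$ recorded above), I would define an auxiliary colouring $\chi_2 : \binom{\calC_2}{\calA_2} \to k^{\binom{\calC_1}{\calA_1}}$ by $\chi_2(q)(p) = \chi(p,q)$. Since $|k^{\binom{\calC_1}{\calA_1}}| = k^r$, the choice of $\calC_2$ produces a morphism $w_2 : \calB_2 \to \calC_2$ on which $\chi_2$ is constant; that is, there is a single function $\psi : \binom{\calC_1}{\calA_1} \to k$ with $\chi(p, w_2 \cdot t) = \psi(p)$ for every $p \in \binom{\calC_1}{\calA_1}$ and every $t \in \binom{\calB_2}{\calA_2}$. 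Applying the choice of $\calC_1$ to the $k$-colouring $\psi$ of $\binom{\calC_1}{\calA_1}$ then yields a morphism $w_1 : \calB_1 \to \calC_1$ and a colour $i$ with $\psi \equiv i$ on $w_1 \cdot \binom{\calB_1}{\calA_1}$.

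Finally I would set $\tilde w := (w_1, w_2) : \tilde\calB \to \tilde\calC$ in $\CC_1 \times \CC_2$. Since composition in the product is componentwise, $\tilde w \cdot \binom{\tilde\calB}{\tilde\calA} = \bigl(w_1 \cdot \binom{\calB_1}{\calA_1}\bigr) \times \bigl(w_2 \cdot \binom{\calB_2}{\calA_2}\bigr)$, and for any $(p,q)$ in this set $\chi(p,q) = \psi(p) = i$, so $\tilde w \cdot \binom{\tilde\calB}{\tilde\calA} \subseteq \calM_i$, as required. The version for morphisms is the identical computation with $\hom(-,-)$ in place of $\binom{-}{-}$ throughout, using $\hom(\tilde\calA,\tilde\calB) = \hom(\calA_1,\calB_1)\times\hom(\calA_2,\calB_2)$ and the $\overset{\mathit{hom}}\longrightarrow$ relation. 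The only genuine subtlety — and the step I would be most careful about — is the ordering of the two Ramsey applications: one cannot choose $\calC_2$ before knowing $r$, so $\CC_1$ must be treated first, and correspondingly the argument is meaningful only because $\binom{\calC_1}{\calA_1}$ is finite, which is precisely what the finiteness-of-hom-sets hypothesis guarantees. Iterating the binary case gives the stated corollary that $\CC^n$ inherits any (dual) Ramsey property of $\CC$.
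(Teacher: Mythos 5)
Your proposal is correct and is essentially the paper's own argument: choose $\calC_1$ first so that $t=|\binom{\calC_1}{\calA_1}|$ is known, choose $\calC_2$ for $k^t$ colours, collapse the product colouring onto $\binom{\calC_2}{\calA_2}$, extract $w_2$, then colour $\binom{\calC_1}{\calA_1}$ by the resulting constant value and extract $w_1$. The paper outlines exactly this reformulation of the classical product argument from Graham--Rothschild--Spencer, so there is nothing to add.
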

\begin{proof}
  The proof is nothing but a reformulation of the proof of Theorem~5 in~\cite[Ch.~5.1]{GRS}.
  Nevertheless, as a demonstration we outline the proof in case of the Ramsey property for objects.

  Take any $k \ge 2$ and $\tilde\calA = (\calA_1, \calA_2)$, $\tilde\calB = (\calB_1, \calB_2)$ in $\Ob(\CC_1 \times \CC_2)$
  such that $\tilde\calA \to \tilde\calB$ and let us show that there is a $\tilde\calC \in \Ob(\CC_1 \times \CC_2)$
  such that $\tilde\calC \longrightarrow (\tilde\calB)^{\tilde\calA}_k$. Take $\calC_1 \in \Ob(\CC_1)$ and
  $\calC_2 \in \Ob(\CC_2)$ so that $\calC_1 \longrightarrow (\calB_1)^{\calA_1}_{k}$ and
  $\calC_2 \longrightarrow (\calB_2)^{\calA_2}_{k^t}$,
  where $t$ is the cardinality of $\binom{\calC_1}{\calA_1}$. Put $\tilde \calC = (\calC_1, \calC_2)$.

  To show that $\tilde\calC \longrightarrow (\tilde\calB)^{\tilde\calA}_k$ take any coloring
  $$
    \chi : \binom{\tilde\calC}{\tilde\calA} \to \{1, \ldots, k\}.
  $$
  Since $\binom{\tilde\calC}{\tilde\calA} = \binom{\calC_1}{\calA_1} \times \binom{\calC_2}{\calA_2}$, the
  coloring $\chi$ uniquely induces the $k^t$-coloring
  $$
    \chi' : \binom{\calC_2}{\calA_2} \to \{1, \ldots, k\}^{\binom{\calC_1}{\calA_1}}
  $$
  of $\binom{\calC_2}{\calA_2}$.
  By construction, $\calC_{2} \longrightarrow (\calB_{2})^{\calA_{2}}_{k^t}$,
  so there is a $w_2 : \calB_2 \to \calC_2$ such that $w_2 \cdot \binom{\calB_2}{\calA_2}$ is $\chi'$-monochromatic.
  Let
  $$
    \chi'' : \binom{\calC_1}{\calA_1} \to \{1, \ldots, k\}
  $$
  be the $k$-coloring of $\binom{\calC_1}{\calA_1}$ defined by
  $$
    \chi''(e_1) = \chi(e_1, e)
  $$
  for some $e \in w_2 \cdot \binom{\calB_2}{\calA_2}$. (Note that $\chi''$ is well defined because
  $w_2 \cdot \binom{\calB_2}{\calA_2}$ is $\chi'$-monochromatic.)
  Since $\CC_1$ has the Ramsey property for objects there is a morphism
  $
    w_1 : \calB_1 \to \calC_1
  $
  such that $w_1 \cdot \binom{\calB_1}{\calA_1}$
  is $\chi''$-monochromatic. It is now easy to show that for $\tilde w = (w_1, w_2)$ we have that
  $\tilde w \cdot \binom{(\calB_1, \calB_2)}{(\calA_1, \calA_2)}$ is $\chi$-monochromatic.
\end{proof}

\begin{COR}\label{cerp.cor.meta}
  $(a)$
  Let $\CC$ be a category whose objects are finite structures and morphisms are embeddings. If
  $\CC$ has the Ramsey property for objects (morphisms) then
  $\CC^n$ has the Ramsey property for objects (morphisms)
  for all $n \in \NN$.

  $(b)$
  Let $\CC$ be a category whose objects are finite structures and morphisms are surjective. If
  $\CC$ has the dual Ramsey property for objects (morphisms) then
  $\CC^n$ has the dual Ramsey property for objects (morphisms)
  for all $n \in \NN$.
\end{COR}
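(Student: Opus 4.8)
The plan is to obtain the corollary from the preceding Theorem by a routine induction on $n$, the only point that needs attention being the verification of the finiteness hypothesis on hom-sets (and, for part $(b)$, the bookkeeping around opposite categories).

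First I would record that the finiteness hypothesis of the Theorem holds automatically in our setting. If $\CC$ has finite structures as objects and embeddings as morphisms, then $\hom_\CC(\calA,\calB)$ is a set of injective maps between the finite sets $A$ and $B$, hence finite; the same is true when the morphisms are surjective maps. So $|\hom_\CC(\calA,\calB)|$ is finite for all $\calA,\calB \in \Ob(\CC)$. For part $(a)$ I would then argue by induction on $n$: the case $n=1$ is trivial, and for the inductive step one identifies $\CC^{n+1}$ with $\CC^n \times \CC$ and notes that $|\hom_{\CC^n}(\tilde\calA,\tilde\calB)| = \prod_{i=1}^{n}|\hom_\CC(\calA_i,\calB_i)|$ is finite, so the finiteness hypothesis is met by both factors $\CC^n$ and $\CC$. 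By the induction hypothesis $\CC^n$ has the Ramsey property for objects (resp.\ morphisms), $\CC$ has it by assumption, and the Theorem yields that $\CC^n \times \CC \cong \CC^{n+1}$ has the Ramsey property for objects (resp.\ morphisms).

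For part $(b)$ I would invoke the definition: $\CC$ has the dual Ramsey property for objects (morphisms) precisely when $\CC^\op$ has the Ramsey property for objects (morphisms). Since $\hom_{\CC^\op}(\calA,\calB) = \hom_\CC(\calB,\calA)$ is finite, the argument of part $(a)$ applies verbatim to $\CC^\op$ and shows that $(\CC^\op)^n$ has the Ramsey property for objects (morphisms). Because composition in a product category is performed componentwise, one checks directly that the categories $(\CC^\op)^n$ and $(\CC^n)^\op$ coincide (same objects, and a morphism on either side is a tuple of reversed $\CC$-morphisms with reversed componentwise composition). Hence $\CC^n$ has the dual Ramsey property for objects (morphisms).

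There is no genuine obstacle here: the entire content is in the preceding Theorem, and the only mild subtleties are confirming that finiteness of hom-sets is inherited by finite products and that passing to the opposite category commutes with taking powers. I would present the write-up accordingly, in a few lines.
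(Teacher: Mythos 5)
Your proposal is correct and matches the paper's intent: the corollary is stated without an explicit proof precisely because it follows by iterating the product theorem, and your induction together with the (routine but necessary) checks that hom-sets of embeddings or surjections between finite structures are finite and that $(\CC^\op)^n = (\CC^n)^\op$ is exactly the argument being left to the reader. Nothing is missing.
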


This corrolary can be restated as a metatheorem:
\begin{quote}\it
  Every finite (dual) Ramsey theorem has the finite product version.
\end{quote}

\begin{EX}
  The Finite Dual Ramsey Theorem (Theorem~\ref{cerp.thm.FDRT})
  says that the category $\FinSetSurj$ of finite sets and surjective maps has the dual Ramsey property for objects.
  Therefore, Corollary~\ref{cerp.cor.meta}~$(b)$ implies that for every $n$,
  the category $\FinSetSurj^n$ has the dual Ramsey property for objects. In other words, we have the following
  \emph{finite product dual Ramsey theorem}:
  \begin{quote}
    For all positive integers $s$, $k$, $a_1, \ldots, a_s$, $m_1, \ldots, m_s$
    there exist positive integers $n_1, \ldots, n_s$ such that for all sets
    $C_1, \ldots, C_s$ of cardinalities $n_1, \ldots, n_s$, respectively,
    and every $k$-coloring of the set $\quotient {C_1}{a_1} \times \ldots \times \quotient {C_s}{a_s}$,
    where $\quotient Ca$ is the set of all partitions of $C$ with exactly $a$ blocks,
    there exist a partition $\beta_1$ of $C_1$ with $m_1$ blocks, \ldots,
    a partition $\beta_s$ of $C_s$ with $m_s$ blocks
    such that the following set is monochromatic:
    $
      \big\{
        (\gamma_1, \ldots, \gamma_s) \in \quotient {C_1}{a_1} \times \ldots \times \quotient {C_s}{a_s} :
        \gamma_i \text{ is coarser than } \beta_i \text{ for all } i \in \{1, \ldots, s\}
      \big\}
    $.
  \end{quote}
\end{EX}

\begin{EX}
  Since $\FinSetInj$ and $\FinSetSurj^\op$ both have the Ramsey property for objects (Examples~\ref{cerp.ex.FRP}
  and~\ref{cerp.ex.FDRT}), so does their product. Therefore,
  \begin{quote}
    For all $a, b \in \NN$ and $k \ge 2$
    there exists a $c \in \NN$ such that
    for every set $\calC$ with $|C| = c$ and for every coloring
    $
      \chi : \binom C a \times \quotient{C}{a} \to k
    $
    there is a set $B \subseteq C$ with $|B| = b$ and a partition $\beta$ of $C$ with $b$ blocks
    such that the following set is monochromatic:
    $$\textstyle
      \binom B a \times \{\gamma \in \quotient{C}{a} : \gamma \text{ \textsl{is coarser than} } \beta\}.
    $$
  \end{quote}
\end{EX}

\section{Appendix 2}
\label{cerp.sec.app2}

In the original development in \cite{KPT} the language of the \Fraisse\ class was assumed to be relational.  In fact it is well-known that the results extend to (countable) functional languages by encoding function symbols as relation symbols.

For completeness, we discuss how this is done.  Given a theory $T$ in language $L = \{R_i\}_{i \in I} \cup \{f_j\}_{j \in J}$ form language $L^* = \{R_i\}_{i \in I} \cup \{R_{f_j}\}_{j \in J}$ and the expanstion $T^*$ of $T$ that contains the additional sentences

$$(\ast_j) \ \ \forall \overline{x} (R_{f_j}(x_0,\ldots,x_{n(j)-1},x_{n(j)}) \leftrightarrow f_j(x_0,\ldots,x_{n(j)-1}) = x_{n(j)})$$

Then the discussion on p. 131 of \cite{KPT} needs to be modified only slightly.  The space $X_{L^*}$  of all $L^*$-structures with universe $\NN$ can be identified as

$$X_{L^*} = \prod_i 2^{(\mathbb{N}^{n(i)})} \times \prod_j 2^{(\mathbb{N}^{n(j)+1})},$$

\noindent which is compact, as it is homeomorphic to $2^{\NN}$.  The space $X_L$ of structures may be identified with a compact subset, which is all $L^*$-structures satisfying the sentences $(\ast_j)$.  Universal quantification amounts to taking (countably many) intersections and the quantifier-free conditions define closed subsets under the (usual)  product topology.

\end{document}